\documentclass[12pt, oneside]{amsart}   	
\usepackage[margin= 1in]{geometry}   
\usepackage{geometry}                		
\geometry{letterpaper}                   		
\usepackage{graphicx}												
\linespread{1}

\title[Comparison of Non-Archimedean and Logarithmic Mirror Constructions]{Comparison of Non-Archimedean and Logarithmic Mirror Constructions via the Frobenius Structure Theorem}
\author{Samuel Johnston}
\address{Samuel Johnston, Imperial College London, South Kensington Campus, London SW7 2AZ, UK}
\email{samuel.johnston@ic.ac.uk}
\date{}		
\subjclass[2020]{14J33,14N35,14N10}					

\usepackage{amssymb}
\usepackage{mathrsfs}
\usepackage{amsthm}
\usepackage{amsmath}
\usepackage{tikz-cd}
\usepackage{hyperref}
\usepackage{dsfont}
\usepackage{tikz-3dplot}
\newcounter{thm}
\newtheorem{remark}[thm]{Remark}
\newtheorem{proposition}[thm]{Proposition}
\newtheorem{definition}[thm]{Definition}
\newtheorem{theorem}[thm]{Theorem}
\newtheorem{example}[thm]{Example}
\newtheorem{conjecture}[thm]{Conjecture}
\newtheorem{lemma}[thm]{Lemma}
\newtheorem{corollary}[thm]{Corollary}
\newtheorem{assumption}[thm]{Assumption}
\newcommand{\ZZ} {\mathbb{Z}}
\newcommand{\NN}{\mathbb{N}}
\newcommand{\kk}{\mathds{k}}

\usetikzlibrary{fadings}

\numberwithin{equation}{section}
\numberwithin{thm}{section}

\begin{document}

\begin{abstract} 
For a log Calabi Yau pair ($X,D$) with $X\setminus D$ smooth affine, satisfying either a maximal degeneracy assumption or contains a Zariski dense torus, we prove under the condition that D is the support of a nef divisor that the structure constants defining a trace form on the mirror algebra constructed by Gross-Siebert are given by the naive curve counts defined by Keel-Yu. As a corollary, we deduce the equality of the mirror algebras constructed by Gross-Siebert and Keel-Yu in the case $X\setminus D$ contains a Zariski dense torus. In addition, we use this result to prove a mirror conjecture proposed by Mandel for Fano pairs satisfying the maximal degeneracy assumption.

\end{abstract}

\maketitle

\section{Introduction}
Since the research area was conceived in the 90s, Mirror Symmetry has been explored using a wide range of geometric perspectives. In this paper, we will compare two approaches to the subject which have recently proven fruitful, one using non-archimedean geometry constructed by Keel and Yu in \cite{archmirror}, and the other using logarithmic geometry constructed by Gross and Siebert \cite{int_mirror} and \cite{scatt}. In both approaches, to certain classes of log Calabi-Yau pairs, $(X,D)$, we can construct a mirror family over a formal neighborhood of a point in $Spec\text{ }\kk[NE(X)]$. However, the multiplicative structure of the ring of functions on these families takes into account very different types of data. In the Keel-Yu construction, the algebra is constructed out of data coming from naive counts of non-archimedean analytic disks intersecting $D$ at a finite number of places, while the Gross-Siebert mirror is constructed from virtual counts of log stable maps, which generally will have components lying entirely in the boundary $D$. We will see that in most settings in which both mirror have been constructed, they give the same result.

More precisely, we assume $U$ a smooth connected affine variety with $(X,D)$ a smooth projective log Calabi-Yau compactification with $D:= X\setminus U$ an snc divisor having an irreducible decomposition $D = \sum_i D_i$. We will assume in addition that there exists a nef divisor $\sum_i a_iD_i$ for some $a_i > 0$, and that $X$ satisfies either Assumption $1.1$ of \cite{scatt} or $U$ contains a Zariski open algebraic torus. We will refer to the existence of the nef divisor as a semipositivity assumption. To fix notation, for $p_1,\ldots,p_k \in \mathscr{P} \subset \Sigma(X)$ contact orders in the Kontsevich-Soibelman skeleton of $(X,D)$, and $\textbf{A} \in NE(X)$, denote by $\eta(p_1,\ldots,p_k,\textbf{A})$ the curve count given by Definition $1.1$ of \cite{archmirror}, and $\textbf{P}$ the corresponding intersection profile with a divisor $D$ determined by $p_1,\ldots,p_k,0$. Our main result is the following:

\begin{theorem}\label{mthm11}
Letting $x\in U$ be generic, and $\mathscr{M}(X,\textbf{P},\textbf{A})_x$ the moduli space of log stable maps $((C,x_1,\ldots,x_k,x_{out}),f)$ with contact order at marked points given by $\textbf{P}$, of total curve class $\textbf{A}$, and $f(x_{out}) = x$, we have:

\[\int_{[\mathscr{M}(X,\textbf{P},\textbf{A})_x]^{vir}} \psi_{x_{out}}^{k-2} = \eta(p_1,\ldots p_k,\textbf{A}).\]

\end{theorem}

By the non-degeneracy of the Keel-Yu trace induced by $\eta$, proved in Section $18$ of \cite{archmirror}, together with an expression for the $\vartheta_0$ term of the products of two and three $\vartheta$ functions in terms of the above collection log Gromov-Witten invariants proven in Theorem $9.3$ of \cite{int_mirror}, we show the two mirror constructions of interest in this paper are equal when assuming the semipositivity assumption of Theorem \ref{mthm11}, as we record in the following corollary:

\begin{corollary}\label{mcr11}
For $U$ a smooth affine log Calabi-Yau containing a Zariski dense torus, the Keel-Yu mirror equals the Gross-Siebert mirror for a compactifying pair $(X,D)$ satisfying the condition of Theorem \ref{mthm11}. In particular, the Gross-Siebert mirror extends to a family over $Spec\text{ }\kk[NE(X)]$.
\end{corollary}

By embedding into an appropriate projective space and compactifying, followed by a birational modification supported on the complement of $U$ to ensure $\overline{U}\setminus U$ has simple normal crossings, $U$ always admits a compactification by a pair $(X,D)$ as considered in Theorem \ref{mthm11} and Corollary \ref{mcr11}. 

\begin{remark}\label{assump}
\begin{enumerate}
\item A target which satisfies Assumption $1.1$ of \cite{scatt} does not necessarily satisfy the assumption of Keel and Yu and vice versa. In any case, by the construction of \cite{int_mirror}, there is always a Gross-Sibert mirror associated with any target which admits a Keel-Yu mirror. This is why we only assume a Zariski dense torus in Corollary \ref{mcr11}. Moreover, the naive curve counts $\eta(p_1,\ldots,p_k,\textbf{A})$ are well defined as long as $U$ is affine and $(X,D)$ is log Calabi-Yau. 

\item Given any log Calabi-Yau variety $(X,D)$ satisfying all of the conditions of Theorem \ref{mthm11} except $(X,D)$ is only log smooth and $D$ does not have simple normal crossings, there exists a log blowup of $(\tilde{X},\tilde{D}) \rightarrow (X,D)$ such that $(\tilde{X},\tilde{D})$ does satisfy all of the conditions of Theorem \ref{mthm11}. Furthermore, in the course of proving Theorem \ref{mthm11}, we will see that the invariant does not change after this modification, see Lemma \ref{birinv}. As a result, a modified version of Theorem \ref{mthm11} holds replacing the snc assumption with $(X,D)$ is log smooth. Since the proof of the general theorem reduces to a proof of Theorem \ref{mthm11}, we restrict to the given hypotheses for ease of exposition. 

\item Under the expectation that the Gross-Siebert mirror algebra is computing degree $0$ symplectic cohomology for the affine variety $U$, the intrinsic mirror construction should satisfy a form of birational invariance. Moreover, the collection of compactifictions of $U$ satisfying the conditions of the theorem are cofinal in the inverse system of all projective compactifications of $U$. Combining the expectation with the fact above, the equality of Corollary \ref{mcr11} is expected to hold in all cases both mirrors can be constructed. 
\end{enumerate}
\end{remark}

We will keep in mind the following running example throughout this paper:
\begin{example}

Let $X = \mathbb{P}^2$, and $D = D_1 + D_2$ with $D_2$ a smooth conic and $D_1$ a line not tangent to $D_2$. Then $D$ is an ample snc anti-canonical divisor, so $(X,D)$ is log Calabi-Yau pair with $X\setminus D$ smooth affine. 
\end{example}

Additionally, we prove the weak Frobenius structure theorem for log Calabi-Yau pairs $(X,D)$ satisfying Assumption $1.1$ of \cite{scatt}, generalizing Theorem $9.3$ of \cite{int_mirror} when $(X,D)$ satisfies Assumption $1.1$.

\begin{theorem}[\cite{int_mirror} Conjecture $9.2$]\label{mthm21}
Let $(X,D)$ be a log Calabi-Yau pair satisfying Assumption $1.1$ of \cite{scatt}. Denote by $R_{(X,D)}$ the resulting mirror algebra, $\vartheta_{p_1} \ldots \vartheta_{p_k} \in R_{(X,D)}$ theta functions associated with contact orders $p_1,\ldots,p_k \in \mathscr{P}(\ZZ)$, and $\langle \vartheta_{p_1}\cdots \vartheta_{p_k} \rangle$ the $\vartheta_0$ coefficient of the product $\vartheta_{p_1}\cdots\vartheta_{p_k} \in R_{(X,D)}$. Letting $\underline{\beta}$ be the undecorated global tropical type with a single vertex, $x \in X\setminus D$ generic, and contact orders $p_1,\ldots,p_k,0$, then

\[\sum_{\beta = (\textbf{A},\underline{\beta})} t^{\textbf{A}}\int_{[\mathscr{M}(X,\beta)_x]^{vir}} \psi_{x_{out}}^{k-2} = \langle \vartheta_{p_1}\cdots \vartheta_{p_k}\rangle \]
where the above sum is over all decorations of $\underline{\beta}$ by curve classes $\textbf{A} \in NE(X)$.
\end{theorem}

Using Theorems \ref{mthm11} and \ref{mthm21}, after restricting further to the case of a target $(X,D)$ a pair with $X$ Fano and $D \in |-K_X|$,  we prove a conjecture of Travis Mandel in \cite{fanoperiod} for a large class of Fano pairs $(X,D)$. Specifically, Theorems \ref{mthm21} and \ref{mthm11} in the Fano case imply Conjecture $1.4$ of \cite{fanoperiod}. Hence, as a corollary of \cite{fanoperiod} Theorem $1.12$, we have the following mirror theorem for Fano varieties which have a canonical wall structure as constructed in \cite{scatt}. 
 
\begin{corollary}\label{mcr2}
Let $(X,D)$ be an snc Fano pair satisfying Assumption $1.1$ of \cite{scatt}. Furthermore, let $D_1,\ldots, D_s$ be the irreducible components of the boundary $X \setminus U$, and define $W = \vartheta_{D_1} + \cdots + \vartheta_{D_s} \in R_{(X,D)}$. Then in the notation of \cite{fanoperiod}, with $\hat{G}_X$ the regularized quantum period of $X$:
\[\hat{G}_X := \sum_{\textbf{A}\in NE(X)} (-K_X\cdot A)!(\int_{[\mathscr{M}_{0,1}(X,\textbf{A})]^{vir}} ev^*([pt])\psi^{((-K_X\cdot A)-2)})t^{\textbf{A}} \in \kk[[NE(X)]]\]
and $\pi_W$ the classical period of $W$:
\[\pi_W:= \sum_{d\ge 0} W^d[\vartheta_0] \in \kk[[NE(X)]]\]
we have the equality:
\[\hat{G}_X = \pi_W.\]
\end{corollary}

We will begin by recalling the relationship between tropical and logarithmic moduli problems, followed by briefly recalling the mirrors constructed using non-archimedean and logarithmic geometry. By making use of the Frobenius structure theorem proven by Keel-Yu for their mirror construction, we will reduce the question of comparing the mirrors to proving Theorem \ref{mthm11}. To prove this theorem, we make use of the restrictions of the targets imposed either by the Zariski dense torus assumption, or those imposed for the construction of the canonical wall structure in \cite{scatt} to remove the toric model assumptions used elsewhere. More precisely, the assumptions allows us to define a useful class of tropical types of punctured log curve called broken line types. Moreover, by appropriately degenerating the point constraint of the log Gromov-Witten invariant of interest, we see that the tropical types of curves contributing to the invariant arise as limits of families of expansions of $k$ broken lines meeting at a point. We then apply a tropical lemma, together with the non-negative boundary condition, to sufficiently restrict curves potentially contributing to the log Gromov Witten invariant with generic point constraint so that they all lie in the smooth locus of the moduli space of interest, and results from \cite{archmirror} Section $3$ will allow us to conclude Theorem \ref{mthm11}. 

Following the proof of Theorem \ref{mthm11} and its corollaries, we explicitly define a tropical type of punctured log curve, $k$-trace type, which appears in the proof of Theorem \ref{mthm11}. By slight modifications of arguments appearing in section $6$ of \cite{scatt}, we relate virtual integrals over moduli spaces of such curves with the trace of $k$ theta functions in the Gross-Siebert mirror algebra. After restricting our targets further to Fano pairs satisfying Assumption $1.1$ of \cite{scatt}, we use the results above to deduce Corollary \ref{mcr2}.

\subsection{Related Work}

Mandel in \cite{clusterGW} has previously proven the full Frobenius structure conjecture for all cluster varieties. In these situations, the target geometries can be tamed by considering toric models associated with various seeds of the cluster variety, corresponding to different choices of dense torus. This allows for tropical curve counting techniques to be utilized in the study of the enumerative geometry of cluster varieties. Instead of assuming a toric model, we use the maximal degeneracy assumption on the boundary to facilitate the connection with tropical geometry. Note however that the non-degeneracy of the trace form is not known in general without the existence of a dense torus.

In work of Tseng and You \cite{RQC_no_log}, they define negative contact order using orbifold Gromov Witten invariants, use it to define a relative quantum cohomology ring, and use this to define another candidate mirror algebra. Moreover, in \cite{Orbi_GW_mirror}, they prove a quantum-classical period correspondence in the same form as Corollary \ref{mcr2} above, after replacing the classical period associated to the superpotential used in this paper with a generating function of analogous orbifold Gromov-Witten invariants, which can be derived in the same manner from the orbifold mirror algebra. This correspondence is proven after imposing a further condition on the boundary to ensure the mirror map is trivial, i.e., the relevant I function equals the relevant J function after no change of variables. In particular, each component of the boundary is assumed to be nef. In this context therefore, we have equalities between certain generating functions of regularized quantum periods, orbifold Gromov-Witten, and log Gromov-Witten invariants. In this paper, we assume only that the Fano pair satisfies Assumption $1.1$ of \cite{scatt}. It is a further interesting question to understand what relationship these invariants have when the mirror map is not assumed to be trivial, and more generally to compare these two candidate mirror algebras.

\subsection{Acknowledgments}
I would like to thank my supervisior Mark Gross for suggesting this problem, and for numerous suggestions and comments throughout the development and writing of this paper, specifically to pursue the perspective inspired by the Frobenius Structure conjecture. I also thank the referee whose thorough comments helped improve the exposition of the paper. The work was funded by a stipend from University of Cambridge Department of Pure Mathematics and Mathematical Statistics, and the ERC Advanced Grant MSAG.

\section{Tropical Types of Log Curves and their Moduli}

Throughout, we let $\kk$ be an algebraically complete field of characteristic $0$. We begin by describing the tropical analogues of the moduli spaces of interest. Let \textbf{Cones} be the category whose objects are rational polyhedral cones $\sigma$ equipped with an integral lattice $\Lambda_\sigma$, i.e, $\sigma \subset \Lambda_{\sigma} \otimes_{\mathbb{Z}} \mathbb{R} =\sigma^{gp}$. Throughout this paper, we refer to $\sigma$ as a real cone, and $int(\sigma)$ as the interior of the cone $\sigma$ i.e. the set of points not contained in proper faces of $\sigma$. Additionally, we let $\sigma_\mathbb{N} = \sigma \cap \Lambda_{\sigma}$ be the monoid of integral points. We let \textbf{RPCC} be the category of rational polyhedral cone complexes. 

By the main theorem of \cite{fun_trop} and later in \cite{punc} Appendix C in the setting of fine log schemes, there exists a functorial tropicalization functor $\Sigma: Log_{f} \rightarrow \textbf{RPCC}$ from the category of fine log schemes to the category of rational polyhedral cone complexes  In the case that $X$ is a smooth scheme over $Spec\text{ }\kk$ and $D$ has simple normal crossings, $\Sigma(X)$ is simply the dual complex of the boundary divisor designated by the log structure on $X$. When $X$ is a toric variety, $\Sigma(X)$ is the underlying polyhedral cone complex of the fan of $X$. Note however that the log structure on $X$ alone does not give an embedding of $\Sigma(X)$ into a real vector space, i.e., $\Sigma(X)$ does not have a canonical affine structure. We denote the integral points of the cone complex $\Sigma(X)$ by $\Sigma(X)(\mathbb{Z})$. We denote by $\Sigma(X)^{max}$ the set of maximal cones, and more generally $\Sigma(X)^{[k]}$ for the set of dimension $k$ cones of $\Sigma(X)$. We note that by definition of $\Sigma(X)$, each component $D_i \subset D$ which is Cartier defines a linear function on the tangent space $\sigma^{gp}$ for $\sigma \in \Sigma(X)$. Indeed, there is a unique section $s_i \in \Gamma(X,\overline{\mathcal{M}}_{X}) = \Gamma(X,\mathcal{M}_X/\mathcal{O}^\times_X)$ such that for $q: \mathcal{M}_X \rightarrow \overline{\mathcal{M}}_X$ the quotient map, $q^{-1}(s_i)$ is the $\mathbb{G}_m$-torsor associated with the the divisor $-D_i$, and a section of $\overline{\mathcal{M}}_X$ by definition defines a PL functions on $\Sigma(X)$. For $u \in \sigma^{gp}$, we denote the pairing of $u$ with the previous PL function by $u(D_i)$.

In the interest of using tropicalization to fix additional discrete data in log moduli problems, we recall the notion of an abstract tropical type, see \cite{decomp} Definition $2.19$:

\begin{definition}
An abstract tropical curve over a cone $\sigma \in \textbf{Cones}$ is the data $(G,\textbf{g},l)$ with
\begin{enumerate}
\item $G = G_\sigma$ is a graph with legs, with set of vertices, edges and legs denoted by $V(G_\sigma)$, $E(G_\sigma)$ and $L(G_\sigma)$ respectively.
\item $\textbf{g}: V(G) \rightarrow \NN$ a function called the genus function.
\item $l: E(G) \rightarrow Hom(\sigma_\NN,\NN)\setminus 0$ and assignment of edge lengths. 
\end{enumerate}
\end{definition}

This determines a cone complex $\Gamma_\sigma$ with a map $\pi_{(G,l)}: \Gamma_\sigma \rightarrow \sigma$ satisfying the condition that for $s \in int(\sigma)$, the fiber $\pi_{(G,l)}^{-1}(s) = \Gamma_s$ is a metrized graph whose underlying graph is $G$, and such that the length of $e \in E(G)$ is given by $l(e)(s)$. Moreover, there is a bijection between cones of $\Gamma_\sigma$ which surject onto $\sigma$ and elements of $F(G):= V(G)\cup E(G) \cup L(G)$. For $x \in F(G)$, we denote by $\sigma_x$ the corresponding cone. In particular, for a vertex $v\in V(G_\sigma)$, there is a section $\sigma \rightarrow \Gamma_\sigma$ of the projection map which maps isomorphically onto the cone $\sigma_v$.

Given a log smooth scheme $X$ as before, a family of tropical maps with target $\Sigma(X)$ over $\sigma$ as above is a morphism of cone complexes $h: \Gamma_\sigma \rightarrow \Sigma(X)$, with $\Gamma_{\sigma}$ a family of tropical curves. As before, over a point $s \in int(\sigma)$, we have a tropical curve $\Gamma_s$, and a piecewise linear map $h_s\colon \Gamma_s \rightarrow \Sigma(X)$. In particular, the map defines two function:
\begin{enumerate}
\item $\pmb\sigma: V(G) \cup E(G) \cup L(G) \rightarrow \Sigma(X)$ which assigns to a feature of $G$ the smallest cone of the complex $\Sigma(X)$ which the cone associated to the feature maps into.
\item $\textbf{u}(e) \in \pmb{\sigma}(e)^{gp}_{\NN}$ for $e \in E(G) \cup L(G)$ which gives the integral tangent vector given by the image of the tangent vector $(0,1) \in \sigma^{gp}_{e,\NN} = \sigma^{gp}_\NN \oplus \mathbb{Z}$ under the map $h: \Gamma_\sigma \rightarrow \Sigma(X)$.
\end{enumerate}

The data $(G,\pmb\sigma,\textbf{u},g)$ determines the tropical type of the family. Throughout this paper, we will exclusively work in genus $0$, so the genus function will always be the $0$ function, and we will suppress this in the notation from now on. Note that to each tropical type $\sigma$, we have the group $Aut(\sigma)$, whose elements are automorphisms of the graph $G_{\sigma}$ preserving the data of target cones, contact orders, and legs.

Given a family of tropical curves $\Gamma_\sigma \rightarrow \sigma$, we also consider the tropical analogue of punctured log curves. Roughly speaking, these are families of tropical curves with possibly bounded legs. The resulting metrized graphs will at times be referred to as tropical disks:

\begin{definition}
Given a family of ordinary tropical curves $\pi_{(G,l)}: \Gamma_\sigma \rightarrow \sigma$, \emph{a puncturing} $\Gamma_\sigma^\circ \rightarrow \sigma$ \emph{of} $\Gamma_\sigma$ is an inclusion of a sub cone complexes $\Gamma_{\sigma}^\circ \rightarrow \Gamma_\sigma$ over $\sigma$ which is an isomorphism away from cones of $\Gamma_\sigma$ associated with legs of $G$. Moreover, for cones $\sigma_l \in \Gamma_\sigma$ associated with punctured legs of $G$, there exists a unique cone $\sigma_l^\circ \in \Gamma_{\sigma}^\circ$ intersecting the interior of $\sigma_l$, and there exists a section $\sigma \rightarrow \sigma_l^\circ$ which when composed with $\sigma_l \rightarrow \Gamma_\sigma$ is a section associated with the unique vertex of $G$ contained in the leg $l$. 
\end{definition}

Note that the definition of a tropical type of a tropical map generalizes easily to the definition of a punctured tropical map. Given the data of a tropical type $(G,\pmb\sigma,\textbf{u})$ above, we say the tropical type is realizable if there exists a family of tropical curves realizing this type. In such a situation, there is a cone $\tau \in \textbf{Cones}$ which is the universal family of tropical curves of type $\tau$, i.e, there exists a universal family $\Gamma_\tau/\tau$ of tropical maps to $\Sigma(X)$ with type $(G,\sigma,\textbf{u})$, and all other families of such tropical maps over a cone $\omega \in \textbf{Cones}$ arise as pullbacks along an appropriate map $\omega \rightarrow \tau$. Whenever the type is realizable, we will refer to both the tropical type and the associated universal cone by the same name. 

Recall now the notion of a punctured log curve, defined in \cite{punc}. For our purposes, we will mostly think of them as ordinary log curves $C$, equipped with an enlarged fine log structure such that after applying functorial tropicalization to the induced morphisms of log schemes, we get a puncturing of $\Sigma(C)$. Most examples of families of tropical curves considered in this paper come from tropicalizing a family of punctured log curve $C \rightarrow S$ with target a fine and saturated log smooth log scheme $X$, or its Artin fan $\mathcal{X}$. More precisely, given a diagram in fine log schemes:

\[\begin{tikzcd}
C \arrow[dd,bend right  =40]  \\
C^\circ \arrow{u} \arrow{r} \arrow{d} & X \\
S
\end{tikzcd}\]
with $C^\circ \rightarrow S$ the puncturing of the family of log curve $C \rightarrow S$, we apply functorial tropicalization to produce the following diagram in the category of generalized cone complexes:

\[\begin{tikzcd}
\Sigma(C) \arrow[dd,bend right = 40]  \\
\Sigma(C^\circ) \arrow{u} \arrow{r} \arrow{d} & \Sigma(X) \\
\Sigma(S)
\end{tikzcd}\]

As observed in \cite{punc} Section $2.2$, $\Sigma(C)$ is a tropical curve parameterized by the generalized cone complex $\Sigma(S)$, and $\Sigma(C^\circ)$ is a puncturing of $\Sigma(C)$. For each cone $\sigma \in \Sigma(S)$, we have an induced family of tropical maps $\Gamma_\sigma^\circ \rightarrow \Sigma(X)$ of a fixed tropical type $\tau_\sigma$. The cone $\sigma \in \Sigma(S)$ is associated with a stratum $S_\sigma \subset S$, and we say the family of punctured log maps restricted to $S_\sigma$ has tropical type $\tau_{\sigma}$.

By \cite{punc}, to a tropical type $\tau$, there is an associated DM log stack $\mathscr{M}(X,\tau)$ of log stable maps marked by $\tau$ in the sense of Definition $3.7$ of \cite{punc}. When $\tau$ is realizable, this condition on a geometric point $(C^\circ,f)$ is equivalent to the condition that the tropical type of $(C^\circ,f)$ has an associated universal tropical moduli space $\tau'$ which contains a face $\tau \subset \tau'$, such that the restriction of the universal family $\Gamma_{\tau'} \rightarrow \tau'$ to the face $\tau$ is the universal family of the realizable tropical type $\tau$. The moduli stack $\mathscr{M}(X,\tau)$ in general possesses the typical singular behavior that is standard in a moduli stack of maps, and in order to construct a perfect obstruction theory which allows for the definition of log Gromov Witten invariants, we make further use of the tropical theory.

Associated to a fine and saturated log stack $X$, we have the tautological morphism $X \rightarrow \textbf{Log}$ which factors through a unique log algebraic stack $X \rightarrow \mathcal{X}$, with $\mathcal{X} \rightarrow \textbf{Log}$ representable and \'etale. We call the log stack $\mathcal{X}$ the Artin fan of $X$. See \cite{bound} section $3$ for further details. Specializing to the case in which $X$ is a log smooth scheme, the moduli problem over schemes of basic families of punctured log maps to $\mathcal{X}$ defines an algebraic log stack $\mathfrak{M}(\mathcal{X},\tau)$ of prestable punctured curves equipped with a marking by $\tau$. This log stack is equipped with an idealized log structure described in \cite{punc} Definition $3.22$. We have a morphism $\mathscr{M}(X,\tau) \rightarrow \mathfrak{M}(\mathcal{X},\tau)$ given by composing a punctured log curve $C \rightarrow X$ with $X \rightarrow \mathcal{X}$. Moreover, unlike $\mathscr{M}(X,\tau)$ in general, $\mathfrak{M}(X,\tau)$ is idealized log smooth by \cite{punc} Theorem $3.24$. Furthermore, there is an obstruction theory associated to the morphism $\mathscr{M}(X,\tau) \rightarrow \mathfrak{M}(\mathcal{X},\tau)$, which facilitates virtual pullback in the sense of \cite{vpull}.

In this paper, we will need to impose point constraints at marked points. Ordinarily, evaluation morphisms $\mathscr{M}(X,\beta) \rightarrow \underline{X}$ on underlying stacks do not lift to morphisms of log stacks. For our purposes however, we will only need to impose point constraints at contact order $0$ points, which mitigates many of these difficulties. Indeed, by \cite{int_mirror} Proposition $3.3$, we have a morphism of log stacks $\mathscr{M}(X,\beta) \rightarrow \mathscr{P}(X,0) = X \times B\mathbb{G}_m$, with $B\mathbb{G}_m$ equipped with the trivial log structure. The projection map on underlying stacks lifts to a map of log stacks, and thus equips $\mathscr{M}(X,\beta) \rightarrow \underline{X}$ with the structure of a log map. 

In order to modify our moduli spaces to impose point constraints, we first enhance the moduli stack $\mathfrak{M}(\mathcal{X},\tau)$ by setting $\mathfrak{M}^{ev}(\mathcal{X},\tau) := \mathfrak{M}(\mathcal{X},\tau) \times_{\underline{\mathcal{X}}} \underline{X}$, where $\mathfrak{M}(\mathcal{X},\tau) \rightarrow \underline{\mathcal{X}}$ is defined by evaluation map at $x_{out}$, and $\underline{X} \rightarrow \underline{\mathcal{X}}$ is the underlying morphism from $X$ to its Artin fan. There exists a morphism $\mathscr{M}(X,\tau) \rightarrow \mathfrak{M}^{ev}(\mathcal{X},\tau)$ factoring $\mathscr{M}(X,\tau) \rightarrow \mathfrak{M}(\mathcal{X},\tau)$. In addition, there is a triple of perfect obstruction theories associated to the three morphisms of:

\[\mathscr{M}(X,\tau) \rightarrow \mathfrak{M}^{ev}(\mathcal{X},\tau) \rightarrow \mathfrak{M}(\mathcal{X},\tau)\]
which form a compatible triple. We will primarily be interested in the obstruction theory associated to the left most morphism, and in this case, by \cite{punc} Proposition $4.4$, the perfect obstruction theory in $D^b(\mathscr{M}(X,\tau))$ is given by $R\pi_*(f^*T_X^{log}(-x_{out}))^{\vee}$, where $f$ the universal map $\pi: \mathfrak{C} \rightarrow \mathscr{M}(X,\tau)$. Similar to the case $\mathscr{M}(X,\beta)$, we have an evaluation morphism $ev: \mathfrak{M}^{ev}(\mathcal{X},\beta) \rightarrow X$ at contact order $0$ points. Using these enhanced moduli stacks, given a log morphism $W \rightarrow X$, we take a fine and saturated fiber product $\mathfrak{M}^{ev}(\mathcal{X},\tau)_W := \mathfrak{M}^{ev}(\mathcal{X},\tau) \times^{fs}_X W$ to produce a point constrained moduli space. When $W$ is a trivial log point mapping to $X\setminus D$, this is the same as the ordinary fiber product. Similarly we define $\mathscr{M}(X,\tau)_W := \mathscr{M}(X,\tau) \times^{fs}_X W$. We have a morphism of log stacks $\mathscr{M}(X,\tau)_W \rightarrow \mathfrak{M}^{ev}(\mathcal{X},\tau)_W$ pulled back from the natural morphism $\mathscr{M}(X,\tau) \rightarrow \mathfrak{M}^{ev}(\mathcal{X},\tau)$, and we equip the former morphism with a perfect obstruction theory pulled back from that of the latter morphism. 
 
 \begin{remark}
We note that the algebraic stack $\mathfrak{M}^{ev}(\mathcal{X},\tau)$ is not particularly well behaved. In particular, it is not of finite type, and contains an infinite collection of log strata. This is typically resolved by replacing $\mathfrak{M}^{ev}(\mathcal{X},\tau)$ with the complement of all log strata whose closure does not intersect the image of $\mathscr{M}(X,\tau)$. The latter space is of finite type, as seen for instance by \cite{punc} Theorem $3.11$. One of the essential ingredients missing from $\mathfrak{M}^{ev}(\mathcal{X},\tau)$ is the combinatorial restriction imposed by the logarithmic balancing condition,  intersection numbers with components of the boundary divisor. 
\end{remark} 

These tropical types may also be upgraded to decorated tropical types, by specifying the curve classes associated to vertices in $G$. We will denote decorated tropical type with underlying tropical type $\tau$ by $\pmb\tau$. In the special case in which $G$ has a single vertex, $k$ marked points with contact orders $(a_i)$, and a decoration of the single vertex with a curve class, we call the associated decorated tropical type a tropical class, and we denote the resulting tropical class by $\beta$.

\section{Constructions of the mirror}
In this section, we briefly recall the mirror constructions of Gross-Siebert and Keel-Yu in \cite{int_mirror} and \cite{archmirror}. As was mentioned in Remark \ref{assump}, the logarthimic construction given in \cite{int_mirror} applies to a larger class of log Calabi-Yau targets than the non-archimedean construction given in \cite{archmirror}. However, a different set of assumptions are imposed in \cite{scatt} to construct the canonical wall structure for $(X,D)$. We recall the assumptions needed for these constructions, and their implications in what follows.

Let $U$ be a smooth $n$--dimensional connected affine variety, $X$ a simple normal crossings projective compactification of $U$, and equip $X$ with the divisorial log structure associated to $D := X\setminus U$. Letting $D = \sum_i D_i$ be a decomposition of $D$ into irreducible components, we further require the pair $(X,D)$ to be log Calabi-Yau, which for us will mean one of the following:

\begin{enumerate}
\item $U$ contains a Zariski dense torus $\mathbb{T}^n \subset U$, and the standard volume form on $\mathbb{T}^n$ extends to a non-vanishing holomorphic volume form $\omega \in \Omega(U)$. 
\item $K_X + D =_{\mathbb{Q}} \sum_i a_iD_i$, with $a_i \ge 0$ for all $i$.
\end{enumerate}

The latter condition is implied by the former condition. Indeed, $X$ is a compactification of the torus $\mathbb{T}^n$, hence by \cite{zigzag} there is a zigzag of simple blowups relating $X$ to a toric variety $X_t$ compactifying $\mathbb{T}^n$. The variety $X_t$ has the property that the volume form $\omega$ has at worst simple poles along $X_t\setminus \mathbb{T}^n$, and this condition is maintained for all simple blowups or blowdowns supported on the compliment of $\mathbb{T}^n$. The assumption that $\omega$ extends to a non-vanishing volume form on all of $U$ then implies the existence of a decomposition of $K_X+D$ in the form required for Condition $2$.

 Condition $1$ is assumed for the non-archimedean construction of Keel-Yu, and Condition $2$ is assumed for the logarithmic construction of Gross-Siebert. We make the following assumption throughout:
\begin{assumption}\label{assump}
We assume $U$ is log Calabi-Yau, and either contains a Zariski dense torus, or satisfies the following maximal degeneracy condition, see Assumption $1.1$ of \cite{scatt}:
\begin{enumerate}
\item $\mathscr{P}$ contains an $n$-dimensional cone.
\item Whenever $\rho \in \mathscr{P}^{[n-1]}$, $\sigma \in \Sigma(X)^{max}$ and $\rho \subset \sigma$, we have $\sigma \in \mathscr{P}^{max}$.
\item For $X_{\rho}$ a good strata with $dim\text{ }X_{\rho} > 1$, then $\partial X_{\rho}$ is connected. 
\end{enumerate}
\end{assumption}

While the Gross-Siebert mirror construction applies in greater generality, it will also in general have much less controlled behavior than in our context of interest. If $U$ contains a dense torus and an extension $\omega$ of the canonical volume form on the torus, we call a component of the boundary $D_i \subset D := X \setminus U$ \emph{good} if $\omega$ has a pole along $D_i$. In the other case, we say the component is good if after making the choice of decomposition, we have $a_i = 0$ in the decomposition of $K_X+D$. In any case, a stratum of $(X,D)$ is good if all components $D_i$ of the boundary it is contained in are good. We define the pair $(B,\mathscr{P})$ to be the subcomplex $\mathscr{P} \subset \Sigma(X)$ containing cones associated to all good strata, and $B$ the topological realization of the associated cone complex. Note that if $(X,D)$ is minimal log Calabi-Yau, i.e., $K_X + D = 0$, then $(B,\mathscr{P})$ is simply $\Sigma(X)$.

 If we assume that $(X,D)$ satisfies Assumption $1.1$ of \cite{scatt}, the $0$ and $1$ dimensional strata obtained as intersections of good divisors are isomorphic as log schemes to $0$ and $1$ dimensional strata in appropriate toric varieties $X_t$. Using the the affine structure on $M_\mathbb{R}$, we may give $B$ the structure of an integral affine manifold across codimension $0$ and $1$ cones, with integer points given by $B(\mathbb{Z}) \subset \Sigma(X)(\mathbb{Z})$. We call the union of such cones $B_0$. This gives $B$ the structure of a pseudo-manifold, with singular locus contained in the union of codimension $2$ cones, see \cite{scatt} Section $1.3$ for details. For example, in the case that $U = \mathbb{T}^n$, and $X$ is a toric variety, then this integral affine structure extends over all of $\Sigma(X)$ and is equal to the standard integral affine structure given by embedding the fan of $X$ into $\mathbb{R}^n$. 

Without making Assumption $1.1$ and instead assuming that $U$ contains a dense torus, then by \cite{archmirror} Lemma $2.2$ and $2.6$,  after an appropriate toric blowup $\tilde{X} \rightarrow X$ of the target induced by a certain subdivision $(\tilde{B},\widetilde{\mathscr{P}}) \rightarrow (B,\mathscr{P})$ there exists a toric variety $X_t$ with fan $\Sigma_t$ such that there is a homeomorphism $(\tilde{B},\widetilde{\mathscr{P}}) \cong (M_{\mathbb{R}},\Sigma_t)$, and the isomorphism locus of the rational morphism $\tilde{X} \dashrightarrow X_t$ contains the generic points of each stratum associated with a cone of $\widetilde{\mathscr{P}}$. In particular, $(\tilde{X},\tilde{D})$ satisfies conditions $(1)$ and $(3)$ of Assumption $1.1$, and all $1$ strata are isomorphic to $\mathbb{P}^1$ and contain $2$ good zero-dimensional strata. It is straightforward to see that these conditions must then also be true for $(X,D)$. In general however, $(X,D)$ will not satisfy condition $(2)$. To handle this case, let $X_{good}$ be the log scheme corresponding to $X$ equipped with the divisorial log structure associated with the good divisors $D_{good}$. By Lemma $1.9$ of \cite{scatt}, the dimension $1$ strata of $X_{good}$ are isomorphic as log schemes to dimension $1$ strata of an appropriate toric varieties $X_{t_i}$, which allows us to define an integral affine structure on $B_0$ as above. Note however that we do not necessarily have $X_t = X_{t_i}$. We capsulize the property above in the following lemma:

\begin{lemma}\label{1strat}
When $U := X\setminus D$ contains a Zariski dense algebraic torus, then $\Sigma(X^{good}) \cong \Sigma(X_t)$ for a toric variety $X_t$, and each $1$ stratum of the log scheme $X^{good}$ is isomorphic as a log scheme to a $1$ stratum of a toric variety.
\end{lemma}

 To utilize this structure to understand our original log scheme, we note there is a morphism of log schemes $X \rightarrow X_{good}$ which is the identity on underlying schemes, since each is equipped with divisorial log structures, and the divisor associated with $X_{good}$ is contained in the divisor associated with $X$. 
 
\begin{remark}
\begin{enumerate}
\item Note that Theorem \ref{mthm11} does not need to assume that the full non-archimedean mirror exists for $(X,D)$. Indeed, the naive curve counts $\eta(\textbf{P},\textbf{A})$ exist assuming only that $U$ is affine log Calabi-Yau. 
\item The integral affine structure described in the case $X$ satisfies Assumption $1.1$ of \cite{scatt} is similar to an integral affine structure which appeared in \cite{SYZarch}, which is induced by the  non-archimedean SYZ fibration explored in loc. cit.
\end{enumerate}
\end{remark}

\begin{example}\label{runex}
Consider $X = (\mathbb{P}^2,D_1+D_2)$ with $D_1$ a line and $D_2$ a conic in general positions. This is a log Calabi-Yau variety, satisfying the conditions of Theorem \ref{mthm11}. In this case $K_X + D = 0$, so $\mathscr{P} = \Sigma(X)$. Then $\Sigma(X)$ consists of two maximal two dimensional cones, glued along zero and one dimensional faces. We denote by $\rho_1$ and $\rho_2$ the one-dimensional cones in $\Sigma(X)$ associated to $D_1$ and $D_2$ respectively, and $\sigma_1$, $\sigma_2$ some labeling of the two maximal cones of $\Sigma(X)$ associated to the intersection points of $D_1$ and $D_2$. We denote primitive generators of the rays by $v_1$ and $v_2$, and elements in $\sigma_i(\mathbb{Z})$ by $av_{1,i} + bv_{2,i}$. 
\end{example}

From the data of the log Calab-Yau pair $(X,D)$ above, we construct the mirror algebra by first extracting the following data: Let $NE(X)\subset N_1(X)$ be the monoid of numerically effective curves, $L$ an ample line bundle on $X$, and set $S_X$ and $\hat{S}_X$ to be $\mathbb{Z}[NE(X)]$ and  $\mathbb{Z}[[NE(X)]]$ respectively, with the latter ring being complete with respect to the filtration induced by intersection pairing with $L$, i.e. by the monomial ideals $L_c$ generated by curve classes $\pmb A$ such that $\pmb A \cdot L \ge c$. Note that for any $c>0$, there are only finitely many curve classes in $\pmb A \in NE(X)$ such that $\pmb A \cdot L < c$. 

Now let $R = \bigoplus_{p \in B(\mathbb{Z})} \vartheta_p\cdot S_X$ be the free $S_X$-module generated by formal sums of ``theta functions" $\vartheta_p$. The mirror algebra in both cases will be an $S_X$-algebra, with $S_X$-module structure given as above. Note in particular that since the underlying $S_X$-module structure is free, the resulting family defined over $Spec$ $S_X$ will be flat. What remains to be defined, and where the approaches diverge, in implementation if not in spirit, is how to define an appropriate multiplication rule. To define the product, it suffices to define the product on the free generators of $R_{(X,D)}$. Thus, for $p,q \in B(\mathbb{Z})$, we write:

\begin{equation}\label{stcont}
\vartheta_p \cdot \vartheta_q = \sum_{r \in B(\mathbb{Z})}\sum_{\textbf{A} \in NE(X)} \alpha_{p,q,\textbf{A}}^rt^\textbf{A}\vartheta_r.
\end{equation}

In both works, the structure constants are certain weighted counts of tropical disks. To describe a contributing disk, we consider tropical disks in $B$ with two unbounded legs with outgoing slope $p$ and $q$, identifying an element of $B(\mathbb{Z})$ with the corresponding vector, and bounded leg with outgoing derivative $-r$ which contains $r \in B(\mathbb{Z})$. In the context of non-archimedean geometry, these tropical curves correspond to the skeletons of non-archimedean analytic disks which map to the Berkovich analytification $X^{an}$, while in the log geometric setting, these correspond to a type of punctured log map. 

\begin{figure}[h]
\centering
\begin{tikzpicture}
\fill[white!70!blue, path fading = north] (0,0)--(-3,0)--(-3,3)--(0,3)--cycle;
\fill[white!70!blue, path fading = north] (0,0)--(3,0)--(3,3)--(0,3)--cycle;
\fill[white!70!blue, path fading = south] (0,0)--(3,-3)--(3,0)--cycle;
\fill[white!70!blue, path fading = south] (0,0)--(3,-3)--(-3,-3)--(-3,0)--cycle;
\draw[black] (0,0)--(0,3);
\draw[black] (0,0)--(3,0);
\draw[black] (0,0)--(3,-3);
\draw[black] (0,0)--(-3,0);
\draw[black] (0,0)--(3,3);
\draw[ball color = red] (1.5,0) circle (0.5mm);
\draw[ball color = red] (1.5,1.5) circle (0.5mm);
\draw[-,color = red] (1.5,0)--(1.5,1.5);
\draw[-,color = red] (0,0)--(1.5,1.5);
\draw[ball color = red] (0,0) circle (0.5mm);
\draw[-,color = green] (0,0)--(1.5,0);
\draw[->,color = red] (1.5,1.5)--(2.25,3);
\draw[->,color = red] (1.5,0)--(.2,0);
\draw[->,color = red] (1.5,0)--(3,-1.5);
\end{tikzpicture}
\caption{An example of a tropical curve contributing to a a structure constant, colored in red. The legs with infinite extent correspond to the ``input" contact orders $p$ and $q$, and the bounded leg is the ``output".}
\end{figure}

In the log geometric mirror construction, one can show that after fixing a point constraint, the tropical type above defines a moduli space of virtual dimension $0$, and the structure constants defining $R_{(X,D)}$ are the degrees of the relevant virtual fundamental class. In the non-archimedean mirror construction, a well behaved moduli space is not given directly from this tropical data, but only after extending the bounded leg to an extended leg. This extension is required to be straight in the affine structure put on $B$, given via an identification $B \cong M_\mathbb{R}$ which exists due to the choice of Zariski dense $n$ dimensional torus sitting inside $U$, see Lemma $2.2$ of \cite{archmirror}. Keel and Yu show these curves vary in connected families in the relevant ambient analytic moduli space of rational curves, and after picking a point in $p \in B \subset U^{an}$, there are a finite number of curves containing the point which have no automorphisms, and the structure constants are given by the degree of this 0-dimensional reduced scheme, or a simple count of curves. No virtual class is needed in this construction, as the relevant moduli space locally around contributing curves turn out to be smooth of the expected dimension, as established in Section $3$ of \cite{archmirror}. Using these respective definitions, the main results of \cite{archmirror} and \cite{int_mirror} establish that these structure constants give $R_{(X,D)}$ the structure of an $S_X$-algebra.

While the Gross-Siebert construction applies in greater generality, the Keel-Yu construction has a number of nice properties absent in the log geometric approach. Firstly, the structure constants are naive counts of curves, and in particular are non-negative integers, while there are examples of the Gross-Siebert mirrors in which there are negative structure constants. In addition, the non-archimedean mirror is defined as a family over $Spec\text{ }S_X$, and not only over a formal neighborhood of the closed point associated to the maximal monomial ideal. Finally, the non-archimedean mirror algebra has been shown to possess a non-degenerate trace given by taking the coefficient $\vartheta_0$. We review a standard Frobenius structure result which states that the structure constants are completely determined by the two and three point multilinear functions $a\otimes b \rightarrow tr(ab)$ and $a\otimes b \otimes c \rightarrow tr(abc)$.

\begin{proposition}\label{nondeg}
Consider a free $A$-module $V$ for a $\kk$-algebra $A$, two $A$ algebra structures $V_1$ and $V_2$ on $V$, and a basis $\{e_i\}$ of $V$ as a free $A$-module such that for both algebras $V_1$ and $V_2$, $1 = e_0 \in \{e_i\}$. Let $tr^k_{V_l}: V^{\otimes n} \rightarrow  A$ be the multilinear map such that $tr^k_{V_l}(v_1\otimes\ldots\otimes v_k)$ is the coefficient of $e_0 \in V$ in $v_1\cdot_{V_l} \ldots \cdot_{V_l} v_k$ expressed in the basis $\{e_i\}$. Suppose $tr_{V_1}$ is non-degenerate i.e., for any $a \in V\setminus 0$, there exists $b \in V$ such that $tr_{V_1}(a,b)\not= 0$. Then if $tr_{V_1}^i = tr_{V_2}^i$ for $i = 2,3$, then $a\cdot_{V_1} b = a \cdot_{V_2} b$ for all $a,b \in V$.
\end{proposition}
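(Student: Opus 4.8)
The plan is to show that the bilinear trace $\mathrm{tr}_{V_1}^2$ together with the trilinear trace $\mathrm{tr}_{V_1}^3$ reconstruct the structure constants of $V_1$, and symmetrically for $V_2$; since these two pairs of traces agree by hypothesis, the two products must coincide. The key observation is that non-degeneracy of $\mathrm{tr}_{V_1}^2$ makes it a perfect pairing on the free $A$-module $V$, so it suffices to determine $\mathrm{tr}_{V_1}^2(a \cdot_{V_1} b, c)$ for all $c$, and this equals $\mathrm{tr}_{V_1}^3(a,b,c)$ by associativity. I would first make precise the claim that $\mathrm{tr}_{V_1}^2$ being non-degenerate over the (possibly non-field) ring $A$ still yields a dual basis. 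Concretely: write $g_{ij} = \mathrm{tr}_{V_1}^2(e_i, e_j) \in A$; non-degeneracy as stated says the matrix $(g_{ij})$ has no kernel, but to invert it I want the stronger consequence that there is a dual basis $\{e_i^\vee\}$ with $\mathrm{tr}_{V_1}^2(e_i, e_j^\vee) = \delta_{ij}$. Here one should be slightly careful: over a general $\kk$-algebra $A$ (e.g. $A = S_X = \kk[NE(X)]$, or its completion) injectivity of $(g_{ij})$ does not automatically give invertibility. However, in the situation of interest $A$ is a domain (a monoid algebra on a monoid with no units other than $0$, or a power series ring), so one passes to the fraction field $\mathrm{Frac}(A)$, where injectivity does force invertibility, obtains the dual basis there, and then I would argue the structure constants one recovers land back in $A$ because they are also expressed via $\mathrm{tr}^3$, which is $A$-valued — or more simply, one observes the whole argument only needs the map $V \to \mathrm{Hom}_A(V,A)$, $a \mapsto \mathrm{tr}_{V_1}^2(a,-)$ to be injective, and then recovers $a \cdot_{V_1} b$ from its image under this map.

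The main steps, in order. (1) Fix $a, b \in V$ and expand $a \cdot_{V_1} b = \sum_i \lambda_i e_i$ with $\lambda_i \in A$; the goal is to show the $\lambda_i$ are determined by the two traces. (2) For each basis element $e_j$, compute $\mathrm{tr}_{V_1}^2(a \cdot_{V_1} b, e_j) = \mathrm{tr}_{V_1}^2\!\big(\sum_i \lambda_i e_i, e_j\big) = \sum_i \lambda_i\, g_{ij}$, so the vector $(\mathrm{tr}_{V_1}^2(a\cdot_{V_1}b, e_j))_j$ equals $(\lambda_i)_i$ times the Gram matrix. (3) By associativity of $\cdot_{V_1}$, $\mathrm{tr}_{V_1}^2(a \cdot_{V_1} b, e_j)$ is the coefficient of $e_0$ in $(a \cdot_{V_1} b) \cdot_{V_1} e_j = a \cdot_{V_1} b \cdot_{V_1} e_j$, which is exactly $\mathrm{tr}_{V_1}^3(a, b, e_j)$. (4) Hence the vector $(\mathrm{tr}_{V_1}^3(a,b,e_j))_j$ determines $(\lambda_i)_i$ uniquely once we know it determines it — this is where injectivity of $a \mapsto \mathrm{tr}_{V_1}^2(a,-)$ enters: the linear system $\sum_i \lambda_i g_{ij} = \mathrm{tr}_{V_1}^3(a,b,e_j)$ has at most one solution. (5) Run the identical computation for $V_2$: $a \cdot_{V_2} b = \sum_i \mu_i e_i$ satisfies $\sum_i \mu_i\, g'_{ij} = \mathrm{tr}_{V_2}^3(a,b,e_j)$ where $g'_{ij} = \mathrm{tr}_{V_2}^2(e_i,e_j)$. (6) Invoke the hypotheses $\mathrm{tr}_{V_1}^2 = \mathrm{tr}_{V_2}^2$ (so $g_{ij} = g'_{ij}$, and in particular $\mathrm{tr}_{V_2}^2$ is also non-degenerate) and $\mathrm{tr}_{V_1}^3 = \mathrm{tr}_{V_2}^3$: then $(\lambda_i)$ and $(\mu_i)$ solve the same system with the same (injective) coefficient matrix, hence $\lambda_i = \mu_i$ for all $i$, i.e. $a \cdot_{V_1} b = a \cdot_{V_2} b$. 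One should also note $1 = e_0$ is the identity for both products — this is needed to make "coefficient of $e_0$ in $x \cdot y$" the same as evaluating a trace against $e_0$, and it is consistent with the hypothesis that $e_0 \in \{e_i\}$; strictly the argument above does not even need $e_0$ to be the unit, only that $\mathrm{tr}^2$ and $\mathrm{tr}^3$ are defined via the $e_0$-coefficient and that associativity holds.

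The step I expect to be the main obstacle is (4)/the linear-algebra subtlety over a non-field ring: "non-degenerate" as defined in the statement (no nonzero $a$ with $\mathrm{tr}_{V_1}^2(a,-) \equiv 0$) is exactly injectivity of $V \to \mathrm{Hom}_A(V,A)$, and this injectivity is literally all that step (4) uses — a linear system over any commutative ring has at most one solution if the associated map is injective on the relevant module — so in fact no inversion of the Gram matrix is required and the "obstacle" dissolves once one resists the temptation to divide. I would therefore phrase the proof to use only injectivity, avoiding fraction fields entirely, and remark that this is why the mere non-degeneracy hypothesis (rather than unimodularity of the pairing) suffices. The only remaining care is to make sure the expansion coefficients $\lambda_i, \mu_i$ genuinely lie in $A$ and that the sums make sense: when $V$ has infinite rank (as here, indexed by $B(\mathbb{Z})$) one must check each product $a \cdot_{V_1} b$ is a finite $A$-combination of the $e_i$ or works in the appropriate completion, which is part of the data of "$A$-algebra structure on the free module $V$" and so may be assumed.
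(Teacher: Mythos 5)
Your proof is correct and follows essentially the same approach as the paper: use associativity to rewrite $tr_{V_1}^2(a\cdot_{V_1} b, c)$ as $tr_{V_1}^3(a,b,c)$, apply the hypothesis that the two- and three-input traces agree, and invoke injectivity of $v \mapsto tr_{V_1}^2(v,-)$ to conclude. The paper states this more compactly without basis expansions, and you are right that only injectivity of the induced map $V \to \mathrm{Hom}_A(V,A)$ is needed — no inversion of the Gram matrix — which is exactly how the paper's proof also handles the non-field base ring.
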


\begin{proof}
Since $tr_{V_1}$ is non-degenerate, an element $v \in V$ is uniquely determined by the linear map $tr_{V_1}(v,-): V \rightarrow A$. Indeed, $tr$ induces an $A$ linear map $tr(-,-):V \rightarrow V^{\vee}$, and the non-degeneracy condition implies this map is injective. Now note that for all $a,b,c \in V$, since we have assumed $2$ and $3$ input trace functions are equal, we have:
 
 \[tr_{V_1}(a\cdot_{V_1}b,c) = tr_{V_1}(a,b,c) = tr_{V_2}(a,b,c) = tr_{V_2}(a\cdot_{V_2}b,c) = tr_{V_1}(a\cdot_{V_2}b,c).\]
 Since this equality holds for all $c \in V$, $tr_{V_1}(a\cdot_{V_1} b,-) = tr_{V_1}(a\cdot_{V_2}b,-)$, and therefore $a\cdot_{V_1}b = a\cdot_{V_2}b$.

\end{proof}

Both algebras are equipped with such a trace by taking the $\vartheta_0$ coefficient of the product, and we note that by Theorem $1.2$ of \cite{archmirror}, the trace form on the Keel-Yu mirror algebra is non-degenerate. While this mirror algebra has coefficients in $\kk[NE(X)]$, it is easy to deduce from the polynomial case that the same non-degeneracy holds after extending scalars of the polynomial mirror algebra to $\kk[[NE(X)]]$.

Since the Gross-Siebert mirror algebra is a priori an algebra over the ring $\kk[[NE(X)]]$, if we show that these coincide for products of two and three theta functions, then the non-degeneracy of the trace of the Keel-Yu mirror will show that the structure constants defining both mirror algebras coincide. 

In order to study the trace form given by the Gross-Sibert mirror in the case of two and three inputs, we recall a result in section $9$ of \cite{int_mirror}, which allows us to express these values as certain ordinary log Gromov-Witten invariants:

Fix a general point $x \in U$, and $\beta$ the data of a curve class $\textbf{A}$, together with compatible contact orders $p_1,\ldots,p_k, 0$ at $k+1$ marked points. Let $\mathscr{M}(X,\beta)_x:= \mathscr{M}(X,\beta) \times_{X} x$ be the point constrained moduli space, which forces the marked point $x_{out}$ to map to $x$. Further define $\mathfrak{M}^{ev}(\mathcal{X},\beta)_x := \mathfrak{M}^{ev}(\mathcal{X},\beta)\times_X x$. We equip the natural morphism $\mathscr{M}(X,\beta)_x \rightarrow \mathfrak{M}^{ev}(\mathcal{X},\beta)_x$ with a perfect obstruction theory by pulling back the obstruction theory for $\mathscr{M}(X,\beta) \rightarrow \mathfrak{M}^{ev}(\mathcal{X},\beta)$. By \cite{int_mirror} Proposition $3.19$, the virtual dimension of this moduli space is $k-2$. Now define the following log Gromov Witten invariant:

\[N_{p_1,\ldots, p_k,0}^\textbf{A} = N_{\textbf{P}}^{\textbf{A}} = \int_{[\mathscr{M}(X,\beta)_x]^{vir}} \psi_{x_{out}}^{k-2}.\]
In the above expression, the $\psi$ integrand is the first Chern class of the cotangent line of $x_{out}$. 

\begin{theorem}[\cite{int_mirror}, Theorem 9.3]
For k = 2,3, the $\vartheta_0$ coefficient in $\vartheta_{p_1}\cdots \vartheta_{p_k}$ is $\sum_{\textbf{A} \in NE(X)} N_{p_1,\ldots,p_k,0}^\textbf{A}\ t^\textbf{A}$.
\end{theorem}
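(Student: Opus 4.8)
The plan is to follow the structure of the original argument in \cite{int_mirror}, Section 9, but to keep the exposition focused on the features we will need later. The theorem asserts that for $k=2,3$ the $\vartheta_0$ coefficient of a product of $k$ theta functions equals a generating series of punctured log Gromov--Witten invariants $N^{\mathbf A}_{p_1,\ldots,p_k,0}$. First I would recall that the structure constants $\alpha^r_{p_1,\ldots,p_k,\mathbf A}$ appearing in $\vartheta_{p_1}\cdots\vartheta_{p_k}=\sum_{r,\mathbf A}\alpha^r_{p_1,\ldots,p_k,\mathbf A}t^{\mathbf A}\vartheta_r$ are, in the canonical scattering construction of \cite{scatt}, sums over broken line types of products of wall functions; the coefficient of $\vartheta_0$ corresponds to $r=0$, i.e.\ to configurations of broken lines with outgoing direction zero ending at the chosen generic base point $x\in U$. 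So the content is to match, term by term in $\mathbf A\in NE(X)$, the scattering-theoretic count of such broken-line configurations with the virtual count $N^{\mathbf A}_{p_1,\ldots,p_k,0}=\int_{[\mathscr M(X,\beta)_x]^{vir}}\psi_{x_{out}}^{k-2}$.

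The key steps, in order, are: (1) identify the tropical types of punctured log maps whose moduli spaces $\mathscr M(X,\beta)_x$ can carry a nonzero virtual class with a $\psi^{k-2}$ insertion and the contact-order data $(p_1,\ldots,p_k,0)$ — by the virtual dimension count (\cite{int_mirror} Prop.\ 3.19) this is a $(k-2)$-dimensional problem, and the $\psi$ class forces the relevant curves to have the combinatorial shape of $k$ incoming legs meeting, after gluing, the base point, i.e.\ exactly the union of $k-1$ broken lines (for $k=2$, a single broken line; for $k=3$, two broken lines sharing a vertex) plus the zero leg at $x_{out}$; (2) for each such tropical type, invoke a gluing/splitting formula to express $\int_{[\mathscr M(X,\beta)_x]^{vir}}\psi_{x_{out}}^{k-2}$ as a product over the vertices of the graph of the punctured invariants attached to individual walls — this is where the scattering functions' coefficients enter, since the wall functions of the canonical scattering diagram are themselves defined by (or shown in \cite{scatt} to equal) such one-vertex punctured invariants; (3) sum over all decorations by $\mathbf A\in NE(X)$ and over all contributing tropical types, and compare with the definition of the product $\vartheta_{p_1}\cdots\vartheta_{p_k}$ via broken lines, reading off that the $\vartheta_0$ coefficient is exactly $\sum_{\mathbf A}N^{\mathbf A}_{p_1,\ldots,p_k,0}t^{\mathbf A}$; (4) check convergence/finiteness, i.e.\ that for each fixed $c$ only finitely many $\mathbf A$ with $\mathbf A\cdot L<c$ contribute, which follows from the completeness of $\hat S_X$ and the boundedness of punctured maps of bounded degree.

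The main obstacle I expect is step (2): relating the virtual integral over the point-constrained moduli space $\mathscr M(X,\beta)_x$ — with its obstruction theory pulled back from $\mathscr M(X,\beta)\to\mathfrak M^{ev}(\mathcal X,\beta)$ and the $\psi^{k-2}$ insertion — to the product of wall contributions in the canonical scattering diagram. This requires a degeneration or decomposition argument: one degenerates the point constraint $x$ toward the boundary (or uses the decomposition formula along the tropical type), tracks how the $\psi$ class distributes across the nodes created, and identifies each local contribution with a scattering wall function coefficient. The bookkeeping of $\psi$-classes under the splitting formula, and the verification that no "extra" tropical types (those with contracted components or higher-valence vertices) contribute beyond the expected broken-line configurations, is the technically delicate part; restricting to $k=2,3$ keeps the combinatorics of these configurations manageable, which is precisely why the theorem is stated only in that range. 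The remaining steps — the virtual dimension count, the finiteness, and the final comparison with the broken-line definition of the product — are essentially formal once step (2) is in hand.
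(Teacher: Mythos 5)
The statement you were asked to prove is quoted in this paper as ``[\cite{int_mirror}, Theorem 9.3]'' precisely because the paper does \emph{not} supply its own proof of it; it imports the result from Gross and Siebert's intrinsic mirror symmetry paper. The closest thing to a proof in the present paper is the argument for Theorem \ref{mthm2} in Section 6, which is a generalization (Conjecture 9.2 of \cite{int_mirror}) from $k=2,3$ to arbitrary $k$ under Assumption 1.1 of \cite{scatt}. Your sketch is broadly in the spirit of that argument: identify the contributing tropical types (the paper's ``$k$-trace types''), degenerate the point constraint $x$ into a deepest stratum $s$, apply a cutting/gluing formula along the edges adjacent to the central vertex (Proposition \ref{glue}), and match the resulting product of punctured invariants with the broken-line expansion of the product of theta functions (Corollary \ref{pcalc} and the computation that follows).

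That said, there is a concrete combinatorial error in your step (1). With contact orders $(p_1,\ldots,p_k,0)$ and the point constraint at $x_{out}$, the $\psi$-class insertion pins down the configuration of special points on the component containing $x_{out}$; after degenerating $x$ to a zero stratum, the contributing tropical types have a unique $(k+1)$-valent vertex $v_{out}$ with legs $L_1,\ldots,L_k,L_{out}$, and cutting the $k$ edges adjacent to $v_{out}$ produces $k$ broken line types together with one trivial type at $v_{out}$. So the relevant configuration for $k=2$ is \emph{two} broken lines meeting at a point (not one), and for $k=3$ it is \emph{three}, not two. This matters because the gluing formula you invoke in step (2) produces a product over exactly those $k$ broken line pieces, weighted by $\prod_i k_{\tau_i}$ and divided by $k_\tau$ and $|\mathrm{Aut}(\tau)|$; if you undercount the pieces by one, the formula does not close up against the structure constants of the scattering diagram. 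With that correction, and after noting that the source for the stated theorem is \cite{int_mirror} rather than this paper, your outline matches the approach the paper takes for its generalization.
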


In the proof of our main result, we will find the $\psi$ class integrand ensures contributing curves have component containing $x_{out}$ with some fixed configuration of special points, similar to data that is fixed in the Keel-Yu mirror construction. Since we only need to show the trace forms correspond in the cases $k=2,3$ it suffices to show that the above log Gromov-Witten invariants equal the naive counts which compute the trace in the Keel-Yu mirror. Before proceeding to the proof of this statement, we need to introduce a class of tropical types which will play an important role throughout this paper.

\section{Tropical balancing condition and broken line types}
In this section, we will introduce various tropical types of log stable maps which appear when we consider log maps intersecting a zero stratum of $(X,D)$. These appear when we consider the point constraint $x \in X$ considered in Theorem $1.1$ degenerating to a zero stratum $s \in X$. Due to the degenerate nature of these curves, tropical geometry will offer a useful method of study of these families of curves. These tropical types will feature prominently in Steps $4$ and $5$ of the proof of Theorem \ref{mthm11}.

As introduced by Kontsevich and Soibelman in \cite{K3affine} and by Gross and Siebert in \cite{afftocpx}, a now longstanding method for understanding various mirror constructions has been through the use of scattering diagrams and in the latter paper broken lines. Given a PL manifold with singularities $B$, and a polyhedral decomposition $\mathscr{P}$ of $B$, then away from a discriminant locus, there exists an integral affine structure, allowing for a sensible notion of a tropical balancing condition. Broken lines then are certain piecewise linear maps $\mathbb{R}_{\ge 0} \rightarrow B$ which satisfy a generalization of the tropical balancing condition almost everywhere, except at finitely many points where the curve is allowed to bend in some manner prescribed by a scattering diagram on $(B,\mathscr{P})$, describing certain walls interacting with the discriminant locus of the integral affine structure. Tropical types analogous to broken lines will feature prominently in this paper. Before introducing the relevant types, we need a few definitions.

\begin{definition}
A realizable tropical type $\tau = (G, \pmb\sigma, \textbf u)$ is \emph{balanced} if for all vertices $v \in V(G)$ with $\pmb\sigma(v)$ a cone in $B_0 \subset \Sigma(X)$, and $t \in int(\tau)$, after postcomposing the universal map $\Gamma_\tau \rightarrow \Sigma(X)$ with the morphism $\Sigma(X) \rightarrow \Sigma(X_{good})$, and identifying resulting contact orders $\textbf{u}(e)$ for edges and legs $e$ containing $v$ with elements of the stalk of the local system $\Lambda_{h_t(v)}$ defined by the integral affine structure on $B_0$, then $\sum_{v \in e} \textbf u(e) = 0$. 
\end{definition}

\begin{remark}\label{rmk1}
The given definition of tropical balancing is different than the version given in \cite{scatt}, but coincides when our target of interest satisfies Assumption $1.1$ of \cite{scatt}. To see the utility of this slight generalization of tropical balancing, note that given $\sigma \in \Sigma(X)^{max} \setminus \mathscr{P}^{max}$ which contains a codimension $1$ face $\gamma \in \mathscr{P}^{[n-1]}$, then the image of $\sigma$ under the map of cone complexes $\Sigma(X) \rightarrow \Sigma(X_{good})$ is contained in $\gamma$. Thus, given a balanced tropical type $\tau$ and $v \in V(G_{\tau})$ with $\pmb\sigma(v) \in \mathscr{P}^{[n-1]}$ contained in two good zero dimensional strata $\gamma,\gamma'$, and $e \in E(G_{\tau})$ with $v \in e$ and $\pmb\sigma(e) = \gamma$, then tropical balancing forces there to be another edge or leg $e'$ containing $v$ with $\pmb\sigma(e')= \gamma'$. 

\end{remark}

The notion of balancing is important due to the following lemma:

\begin{lemma}[\cite{scatt} Lemma $2.1$]
Let $C^\circ/W \rightarrow X$ be a stable punctured log map, with $W = (Q \rightarrow Spec \text{ }\kk)$ a geometric log point. For $s \in int(Q^{\vee}_{\mathbb{R}})$, let $h_s: G \rightarrow \Sigma(X_{good})$ be the corresponding tropical map. If $v \in V(G)$ satisfies $h_t(v) \in B_0$, then $h_s$ satisfies the tropical balancing condition at $v$. 
\end{lemma}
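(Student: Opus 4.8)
The statement to prove is the tropical balancing Lemma, attributed to \cite{scatt} Lemma 2.1: for a stable punctured log map $C^\circ/W \to X$ over a geometric log point, the associated tropical map $h_s$ satisfies the tropical balancing condition at every vertex $v$ whose image under $h_t$ lies in $B_0$.

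\medskip

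The plan is to reduce the general statement to the classical tropical balancing condition in a toric target, using the compatibility between the log structure on $X_{good}$ near low-dimensional good strata and the log structure on an auxiliary toric variety. First I would recall the standard fact (e.g. from the Gross--Siebert / Abramovich--Chen theory of log stable maps, or \cite{punc} in the punctured case) that tropicalization of a punctured log map to a toric variety $X_t$ produces a tropical map $G \to \Sigma(X_t) = (M_\mathbb{R}, \Sigma_t)$ which satisfies the honest balancing condition $\sum_{v\in e}\mathbf{u}(e) = 0$ at every vertex, where the sum is taken in the lattice $M$ using the global affine structure — this is essentially the logarithmic balancing condition coming from the fact that $\mathcal{O}_X(D_i)$ is trivialized by the rational functions $z^m$ and the degree of a principal divisor on a proper curve is zero. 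The work is then to transport this to $X_{good}$: given $v$ with $\pmb\sigma(v)$ a cone $\omega$ in $B_0$, i.e. $\omega$ is a good stratum of dimension $0$ or $1$, and by Lemma 1.9 of \cite{scatt} (invoked in the excerpt) the corresponding low-dimensional stratum of $X_{good}$, together with the log structure restricted to a neighborhood, is isomorphic as a log scheme to the analogous stratum of a toric variety $X_t$.

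\medskip

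Concretely, the key steps, in order: (1) Fix $v \in V(G)$ with $h_t(v) \in B_0$, and note that the stratum $Z_v \subset X$ (resp.\ $\subset X_{good}$) into which the component $C_v$ of the special fiber indexed by $v$ maps has dimension $0$ or $1$ by the hypothesis that $\pmb\sigma(v) \in B_0$, since $B_0$ consists of cones of codimension $\le 1$. (2) Pass to an étale (or formal) neighborhood $V$ of $Z_v$ in $X_{good}$ and use Lemma 1.9 of \cite{scatt} to identify $(V, \mathcal{M}_{X_{good}}|_V)$ with an étale neighborhood of the corresponding stratum in a toric variety $X_t$ whose fan realizes the local affine chart of $B_0$ around $\omega = \pmb\sigma(v)$; this is exactly where the local integral affine structure on $B_0$, with its chart into $M_\mathbb{R}$, comes from. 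Under this identification the contact orders $\mathbf{u}(e)$ for edges and legs $e$ containing $v$ become elements of the lattice $\Lambda_{h_t(v)}$, matched with lattice vectors in $M$. (3) Restrict the punctured log map $C^\circ \to X$, composed with $X \to X_{good}$, to the star of $v$ — i.e. consider the component $C_v$ with its punctured/nodal marked points — and observe it defines a punctured log map to the toric target $X_t$ (after the local identification; one uses that contact orders and the tropical picture are local around $v$). (4) Apply the classical balancing condition for punctured log maps to toric varieties to conclude $\sum_{v\in e}\mathbf{u}(e)=0$ in $M$, hence in $\Lambda_{h_t(v)}$, which is the asserted tropical balancing at $v$ in the sense of the Definition preceding the lemma.

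\medskip

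The main obstacle I anticipate is step (3): making precise the sense in which the ``star of $v$'' of the punctured log map gives a well-defined punctured log map to the local toric model, and checking that the contact orders $\mathbf{u}(e)$ computed via the global tropicalization $h_s: G \to \Sigma(X_{good})$ agree with those computed in the toric model. The subtlety is that the affine structure on $B_0$ is only defined across codimension $\le 1$ cones and may fail to extend over the singular (codimension $2$) locus, so one must be careful that all edges and legs $e$ incident to $v$ have images $h_t(e)$ that stay within a single affine chart — this is guaranteed precisely because $h_t(v) \in B_0$ forces $\pmb\sigma(v)$ and its adjacent cones, which contain the $\pmb\sigma(e)$, to lie in the chart where $B$ is a manifold, but this needs to be stated cleanly. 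A secondary point is handling the punctured legs: for a punctured leg $\ell$ at $v$, $\mathbf{u}(\ell)$ may point in a direction not in the cone $\pmb\sigma(v)$ (this is the whole point of punctured invariants), but the balancing statement and its toric proof accommodate this, since the balancing condition for punctured maps is the statement that the sum of all outgoing primitive directions weighted by tangency orders vanishes regardless of sign — so no extra care beyond bookkeeping is required, and one may simply cite the punctured balancing result of \cite{punc}.
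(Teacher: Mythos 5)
Your proposal sketches a from-scratch proof of the cited result, whereas the paper treats this lemma as essentially already established in the literature. The paper's ``proof'' is one sentence: when $(X,D)$ satisfies Assumption $1.1$ of \cite{scatt}, the statement \emph{is} Lemma $2.1$ of \cite{scatt}; when it does not, the morphism of log schemes $X \to X_{good}$ (identity on underlying schemes, inclusion of the divisorial log structure for the good divisors) induces a punctured log map $C^\circ \to X_{good}$, and the argument of \cite{scatt} applies verbatim to the target $X_{good}$ because the low-dimensional good strata of $X_{good}$ are still toric by \cite{scatt} Lemma $1.9$. The contribution of this paper for this particular lemma is not a new proof of the balancing statement, but the observation that the factorization through $X_{good}$ makes the cited result apply without Assumption $1.1$ --- which is precisely why the lemma as stated here tropicalizes to $\Sigma(X_{good})$ rather than to $\Sigma(X)$.

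Your sketch, by contrast, dives into the internals: pick the low-dimensional good stratum $Z_v$, identify an \'etale neighborhood with a toric model via \cite{scatt} Lemma $1.9$, and invoke the toric/punctured logarithmic balancing condition from \cite{punc}. This is essentially the strategy used inside \cite{scatt}'s own proof of its Lemma $2.1$, so it is not wrong, but you are re-deriving a result the paper black-boxes. What your sketch does not make explicit is the $X \to X_{good}$ reduction: you proceed as if the toric model structure on good strata is simply available, but outside Assumption $1.1$ this is exactly what the passage to $X_{good}$ supplies, and it is the one genuinely new observation here. Two smaller imprecisions: (i) your step (1) asserts $Z_v$ has dimension $0$ or $1$ from $h_t(v) \in B_0$, but this holds only when $\pmb\sigma(v)$ itself has codimension $\le 1$; if $h_t(v)$ lies in a proper face (e.g.\ $\pmb\sigma(v) = 0$), the stratum is bigger and that case needs a separate, easier, treatment. (ii) The identification of $\mathbf{u}(e)$ with a vector in the stalk $\Lambda_{h_t(v)}$ is only automatic after composing with $\Sigma(X) \to \Sigma(X_{good})$ --- contact orders along non-good cones of $\Sigma(X)$ must first be pushed forward, which is exactly the content of the paper's looser balancing definition and the discussion in Remark \ref{rmk1}; your sketch glosses over this by implicitly working in $\Sigma(X_{good})$ from the start.
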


If $(X,D)$ satisfies Assumption $1.1$ of \cite{scatt}, then this is Lemma $2.1$ of \cite{scatt}. If not, then we assume $U$ contains a dense torus instead, hence Lemma \ref{1strat} applies. Using this input, the same argument applied to the induced morphism $C^\circ/W \rightarrow X_{good}$ applies to show the resulting tropical type is balanced in the above looser sense. We now introduce the following tropical types:

\begin{definition}
A \emph{broken line type} $\tau = (G,\pmb\sigma,\textbf u )$ is a type of punctured log curve with target $X$ with $dim\text{ }X = n$ satisfying:

\begin{enumerate}

\item $G$ is a genus zero graph with $L(G) = \{L_{in},L_{out}\}$, with $\pmb\sigma(L_{out}) \in \mathscr{P}$ and $u_\tau := \textbf{u}(L_{out}) \not= 0$, $p_\tau := \textbf{u}(L_{in})\in \pmb\sigma(L_{in})\setminus\{0\}$, i.e. that the slope of the leg $L_{in}$ is non-negative with respect to all non-negative linear functions on the cone, and is not zero.
\item $\tau$ is realizable and balanced.
\item Let $h: \Gamma(G,l) \rightarrow \Sigma(X)$ be the corresponding universal family of tropical maps, and let $\tau_{out} \in \Gamma(G,l)$ the cone corresponding to $L_{out}$. Then dim $\tau = n-1$ and dim $h(\tau_{out}) = n$. 

\end{enumerate}
\end{definition}

We will occasionally also refer to a trivial broken line type, which is not an actual tropical type but refers only to a graph with one leg $L_{in} = L_{out}$ and no vertices, with the convention that $\pmb\sigma(L_{out}) = -\pmb\sigma(L_{in})$.

While we will not necessarily assume that $X$ has a canonical scattering diagram, the following tropical type will occasionally be useful to refer to:

\begin{definition}
 A \emph{wall type} $\omega = (G,\pmb\sigma, \textbf u)$ is a type of punctured log curve in a target $X$ satisfying:
\begin{enumerate}
\item $G$ is a genus $0$ graph with $L(G) = \{L_{out}\}$ with $\pmb\sigma(L_{out}) \in \mathscr{P}$, and $ u_\omega := \textbf{u}(L_{out}) \not= 0$. 
\item $\omega$ is realizable and balanced.
\item Let $h: \Gamma(G,l) \rightarrow \Sigma(X)$ be the corresponding universal family of tropical maps, and $\omega_{out}$ the cone corresponding to $L_{out}$. Then dim $\omega = n-2$, and dim $h(\omega_{out}) = n-1$. Furthermore, $h(\omega_{out}) \not\subset \partial B$. 
\end{enumerate}
\end{definition}

\begin{figure}
\centering
\begin{tikzpicture}
\fill[white!70!blue, path fading = north] (0,0)--(-3,0)--(-3,3)--(0,3)--cycle;
\fill[white!70!blue, path fading = north] (0,0)--(3,0)--(3,3)--(0,3)--cycle;
\fill[white!70!blue, path fading = south] (0,0)--(3,-3)--(3,0)--cycle;
\fill[white!70!blue, path fading = south] (0,0)--(3,-3)--(-3,-3)--(-3,0)--cycle;
\draw[black] (0,0)--(0,3);
\draw[black] (0,0)--(3,0);
\draw[black] (0,0)--(3,-3);
\draw[black] (0,0)--(-3,0);
\draw[dashed] (0,0)--(0,-3);
\draw[ball color = red] (0,1.5) circle (0.5mm);
\draw[ball color = red] (1.5,0) circle (0.5mm);
\draw[->,color = red] (0,1.5)--(-1.5,3);
\draw[-,color = red] (0,1.5)--(1.5,0);
\draw[ball color = green] (0,0) circle (0.5mm);
\draw[-,color = green] (0,0)--(1.5,0);
\draw[->,color = red] (1.5,0)--(3,-0.75);
\end{tikzpicture}
\caption{An example of a broken line type in the tropicalization of a blow up of $\mathbb{P}^1\times\mathbb{P}^1$ at a point contained in the interior of a boundary divisor. The $1$-dimensional cone contained in the lower half plane is identified with the dashed line to give the integral affine structure. The spine of the type is depicted in red, and a wall type produced by forgetting the spine is depicted in green, in this case corresponding to the exceptional divisor. Note that a tropical balancing condition holds at all vertices in the spine. Moreover, a broken line can only bend along the line spanned by this red ray.}
\end{figure}

After recalling the notion of a spine of a dual graph with legs, we note a tropical lemma which will restrict the behavior of the corresponding log curves, as well as allowing these types to be related to older notions of walls and broken lines.

\begin{definition}
Given a graph $(G,l)$, the spine $(S,l)$ of $(G,l)$ is the convex hull of the legs.
\end{definition}

\begin{lemma}[\cite{scatt} Lemma $2.5$]\label{GStlemma}
Fix a balanced type $\tau = (G,\pmb\sigma,\textbf{u})$ of a genus zero tropical map to $\Sigma(X)$ with a distinguished leg $L_{out} \in L(G)$ and $\textbf{u}(L_{out}) \not= 0$. Let $h: \Gamma(G,l) \rightarrow \Sigma(X)$ be the corresponding universal family of tropical maps, defined over the cone $\tau$. 
\begin{enumerate}
\item Suppose $G$ has only one leg, $L_{out}$, and let $\tau_{out} \in \Gamma$ be the corresponding cone. Suppose further that $\pmb\sigma(L_{out}) \in \mathscr{P}$. Then dim $h(\tau_{out}) \le n-1 $.
\item Suppose $G$ has precisely two legs, $L_{in}$ and $L_{out}$, with $\pmb\sigma(L_{in})$, $\pmb\sigma(L_{out}) \in \mathscr{P}$. Suppose further that dim $\tau = n-1$ and dim $h(\tau_{out}) = n$. Then with $S$ the spine of $G$, and $t\in int(\tau)$, $h_t(S) \subset B \subset |\Sigma(X)|$, and $h_t(S)$ only intersects codimension zero and one cones of $\mathscr{P}$, except possibly at non-vertex endpoints of $L_{in}$ and $L_{out}$. Furthermore, dim $h_{\tau}(\tau_v) = n-1$ for every vertex of the spine $v \in V(S)$. 

\end{enumerate}
\end{lemma}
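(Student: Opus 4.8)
## Proof proposal for Lemma \ref{GStlemma}

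The plan is to deduce this statement essentially from the tropical balancing condition (Lemma 2.1 of \cite{scatt}, recalled above) combined with a dimension count controlling how the universal cone $\tau$ maps into $\Sigma(X)$. Since the lemma is attributed to \cite{scatt} Lemma 2.5, my reconstruction should follow the same strategy: the key point throughout is that balancing at a vertex $v$ with $h_t(v) \in B_0$ forces the outgoing contact vectors $\mathbf{u}(e)$ of edges/legs at $v$ to sum to zero in the integral affine structure on $B_0$, and this is a strong linear constraint that limits the dimension of the image of the cone of tropical maps. First I would set up the universal family $h : \Gamma(G,l) \to \Sigma(X)$ over the cone $\tau$ and record that $\dim \tau$ equals the number of independent edge-length parameters, while $\dim h(\tau_{out})$ is bounded by $\dim \tau$ plus the rank of the contact direction $\mathbf{u}(L_{out})$ contributed transversally.

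For part (1): assume $G$ has a single leg $L_{out}$ with $\pmb\sigma(L_{out}) \in \mathscr{P}$, and suppose for contradiction that $\dim h(\tau_{out}) = n$, i.e.\ the image cone is top-dimensional in $B$. I would trace the leg $L_{out}$ back along the spine; since there is only one leg, balancing at \emph{every} vertex $v$ with $h_t(v)\in B_0$ says the contact vectors at $v$ sum to zero. Propagating this constraint from the leaf-most vertices inward, the direction $\mathbf{u}(L_{out})$ must be expressible as a sum of the bounded-edge directions emanating from the vertex $v_{out}$ adjacent to $L_{out}$, each of which moves the endpoint along a cone already accounted for by the edge-length parameters. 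Hence the image $h(\tau_{out})$ gains no extra dimension from $\mathbf{u}(L_{out})$ beyond what the edge lengths provide, and a careful count shows this caps $\dim h(\tau_{out})$ at $\dim \tau_{out} - 1 \le n-1$ (one dimension is lost because the single-leg configuration cannot "escape" $B$ transversally while staying balanced, using that $\pmb\sigma(L_{out})\in\mathscr{P}$ so the relevant affine structure is defined there). The subtle part is handling vertices mapping into the codimension-$\ge 2$ singular locus of $B$, where balancing need not hold; one argues that a generic point $t\in\mathrm{Int}(\tau)$ pushes such vertices onto $B_0$, or that their presence only further constrains dimensions.

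For part (2): now $G$ has exactly two legs $L_{in}, L_{out}$, both with $\pmb\sigma \in \mathscr{P}$, and we are given $\dim\tau = n-1$, $\dim h(\tau_{out}) = n$. The spine $S$ is the path from $L_{in}$ to $L_{out}$. I would first show $h_t(S) \subset B$: any vertex $v$ of the spine that left $B_0$ would, as in Remark \ref{rmk1}, be forced by balancing (after pushing to $\Sigma(X_{good})$) to have its image collapse into a lower-dimensional face, which would drop $\dim h(\tau_{out})$ below $n$, contradicting the hypothesis. Next, to see $h_t(S)$ meets only codimension $0$ and $1$ cones of $\mathscr{P}$ away from the endpoints: if the spine passed through a codimension-$\ge 2$ cone at an interior vertex $v$, balancing there together with the constraint that the whole configuration has only $n-1$ edge-length parameters but achieves an $n$-dimensional image would be violated — essentially the image can be $n$-dimensional only if each spine vertex contributes a "free" direction, which requires the vertex to sit on a codimension $\le 1$ stratum where the affine/monodromy structure permits a genuine bend. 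Finally, $\dim\tau_v = n-1$ for each spine vertex: since $\dim\tau = n-1$ and each $\tau_v$ is a face of $\tau$, we need $\tau_v = \tau$; this follows because if some edge length separating $v$ from the rest were a free parameter not constrained at $v$, a balancing/realizability argument shows the image dimension would exceed $n$, again contradicting the count.

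The main obstacle I expect is the bookkeeping in part (2) that simultaneously uses the \emph{equality} $\dim h(\tau_{out}) = n$ as an input to pin down the combinatorics of the spine — it is a rigidity statement, and making precise that "an $n$-dimensional image with only $n-1$ parameters forces the spine to be a straight-ish path through codimension $\le 1$ cones with every vertex cone equal to $\tau$" requires carefully relating the differential of $h$ at $\tau_{out}$ to the contributions of edge lengths and the transversal contact direction $\mathbf{u}(L_{out})$, while correctly accounting for monodromy of the affine structure around the singular locus. The behavior near the non-vertex endpoints of $L_{in}$ and $L_{out}$ (explicitly exempted in the statement) is where the singular locus is allowed to be touched, and isolating exactly why only there is the other delicate point.
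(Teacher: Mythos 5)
The paper does not give its own proof of this lemma: it is cited verbatim as Lemma~$2.5$ of \cite{scatt}, and the only paper-specific content is the remark immediately following the statement, that the conclusion of Remark~\ref{rmk1} can substitute for Assumption~$1.1(2)$ of \cite{scatt} so that the cited proof goes through under the weaker hypotheses used here. Your reconstruction therefore has to be measured against the argument in \cite{scatt} itself, and against that benchmark it is an outline of intentions rather than a proof.

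You correctly identify the two active ingredients — the tropical balancing condition at vertices landing in $B_0$, and a dimension count for the image of the universal cone under $h$ — but at every point where the lemma actually has content, the proposal replaces the argument with ``a careful count shows'' or ``one argues that.'' In part~(1), the statement that ``propagating the constraint inward'' caps $\dim h(\tau_{out})$ at $n-1$ is the entire lemma; nothing in the sketch identifies \emph{which} linear relation among the coordinates of $h_t(v_{out})$ results from balancing, nor why it cuts the dimension by exactly one rather than by zero. Balancing at the vertex $v_{out}$ adjacent to $L_{out}$ only recovers the already-fixed datum $\textbf{u}(L_{out}) = -\sum_e \textbf{u}(e)$ and imposes no constraint by itself on the image of the evaluation map $\tau \to \pmb\sigma(L_{out})$; the constraint must come from aggregating the balancing conditions and the geometry of $B$ (in \cite{scatt} this is where the assumption $\pmb\sigma(L_{out}) \in \mathscr{P}$ and the affine structure on $B_0$, with its monodromy, do real work), and your sketch does not surface that mechanism. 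The same issue recurs in part~(2): ``essentially the image can be $n$-dimensional only if each spine vertex contributes a free direction'' and ``a balancing/realizability argument shows the image dimension would exceed $n$'' are statements of what should be true, not demonstrations of it.

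There is also a concrete misreading in your last claim for part~(2). You write that $\dim\tau_v = n-1$ follows because ``each $\tau_v$ is a face of $\tau$ and we need $\tau_v=\tau$.'' But in the paper's setup, $\tau_v$ is the image of the vertex-section $v : \tau \to \Gamma_\tau$, which by construction is isomorphic to $\tau$ — it is never a proper face. Read this way the claim $\dim\tau_v = n-1$ is vacuous given $\dim\tau = n-1$; the content of the lemma (and the way it is actually invoked in the proof of Corollary~\ref{tlemma}) is that $\dim h(\tau_v) = n-1$, i.e.\ the \emph{image} cone in $\Sigma(X)$ has full dimension $n-1$ rather than something smaller. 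Your proposal does not argue this at all, and the ``face of $\tau$'' reasoning suggests you are proving the wrong statement. Finally, the proposal does not engage with the extension issue the paper does address, namely replacing Assumption~$1.1(2)$ of \cite{scatt} by Remark~\ref{rmk1}; if you intend to reconstruct this lemma in the paper's generality, that substitution needs to be threaded through the argument rather than handled implicitly.
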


Again, while we do not assume in general that $(X,D)$ satisfies Assumption $1.1$ of \cite{scatt}, we note that using the conclusion of Remark \ref{rmk1} as a replacement for Assumption $1.1(2)$ of \cite{scatt} is sufficient to conclude the above lemma for all cases in our context. We note the following corollary of Lemma \ref{GStlemma}:

\begin{corollary}\label{mincor}
For a broken line type $\tau$, and $v \in V(S)$ a vertex contained in the spine, the morphism $ev_{v}: \tau \rightarrow \pmb\sigma(v)$ is injective, i.e. if two punctured tropical maps $\Gamma,\Gamma' \rightarrow \Sigma(X)$ of type $\tau$ map $v$ to the same point in $\pmb\sigma(v)$, then the two tropical maps are isomorphic.
\end{corollary}

\begin{proof}
By Lemma \ref{GStlemma}(2), we have $im(ev_{v}) = h_{\tau}(\tau_{v}) = n-1$ Since $dim\text{ }\tau = n-1$, and $ev_{v}$ is a map of cones, we must have $ev_{v}$ is injective.
\end{proof}

For future purposes, we define $k_\tau:= |coker(ev_{v_{out}}: \tau_{out,\NN}^{gp} \rightarrow \pmb\sigma(L_{out})^{gp}_{\NN})|$, which is finite since $dim\text{ }h(\tau_{out}) = n = dim\text{ }\pmb\sigma(L_{out})$, and the map is injective.

A classical broken line comes from a broken line type by taking the spine of a family of tropical curves of type $\tau$. Moreover, components of the complement of the family of spines correspond to walls which allow for the broken line to bend in the integral affine structure on $B_0$. For more details on this correspondence, see Section $4$ of \cite{scatt}. 

We will primarily be using the following corollary of Lemma \ref{GStlemma}:

\begin{corollary}\label{tlemma1}
Let $\tau = (G,\pmb\sigma,\textbf{u})$ be a tropical type of a stable log map $f: C \rightarrow X$, with universal family of tropical curves $h: \Gamma(G,l) \rightarrow \Sigma(X)$, marked by a broken line tropical type $\tau' = (G',\pmb\sigma',\textbf{u})$. Let $S,S'$ be the spines of $G,G'$ respectively. Then $dim\text{ }h(\tau_v) = n-1$ for all $v \in V(S)$, $\textbf{u}(e) \notin h(\tau_v)^{gp}$ for all $e \in E(S)$ containing a vertex $v \in V(S)$, and if $t \in \tau$ with $h_t(v) = 0$ for some $v \in V(S)$, then $h_t(v') = 0$ for all $v' \in V(S)$.

\end{corollary}

\begin{proof}

For $t \in \tau'$ and $v' \in V(S')$, by Corollary \ref{mincor}, $h_t(v')$ uniquely determines the tropical map $h_t: \Gamma_t' \rightarrow \Sigma(X)$, i.e., $ev_{v'}:\tau' \rightarrow \pmb\sigma(v')$ is injective. By the relationship between basic monoids described in Remark $3.7$ of \cite{punc}, note that since we assumed $\tau$ is marked by the realizable tropical type $\tau'$, we may identify $\tau'$ with a face of $\tau$. Let $c: \Gamma(G',l) \rightarrow \Gamma(G,l)$ be the morphism of universal families associated with the face inclusion $\tau' \subset \tau$, and a corresponding graph contraction $\phi: G \rightarrow G'$. Note that $c$ maps the cones corresponding to vertices and edges in the spine $S'$ of $G'$ to the cones of the spine $S$ of $G$. In particular, the contraction map $\phi: G \rightarrow G'$ restricts to a morphism $\phi|_{S}: S \rightarrow S'$. We label the corresponding subcomplexes of $\Gamma(G',l')$ and $\Gamma(G,l)$ as $\Gamma(S',l)$ and $\Gamma(S,l)$ respectively. Let $v \in V(S)$, and note by Lemma \ref{GStlemma} that $\pmb\sigma(\phi(v)) \in \mathscr{P}$ and dim $h'(\tau'_{\phi(v)}) = n-1$. Hence dim $h(\tau_v) \ge n-1$.

If $\pmb\sigma(v) \in \mathscr{P}^{[n-1]}$, then clearly $dim\text{ }h(\tau_v) = n-1$. If $\pmb\sigma(v) \in \mathscr{P}^{max}$, then the generic point of the corresponding component $C_v$ of $C$ maps to a zero stratum, so $f$ contracts $C_v$. Thus, by stability of $f$, $v$ must be at least $3$ valent. Letting $e'$ be an edge containing $v$ but not contained in $E(S)$, consider the tropical type resulting from cutting at $e'$ and taking the connected component not containing $v$. The resulting tropical type satisfies the conditions of Lemma \ref{GStlemma} (1), hence $h(\tau_v)$ is contained in a codimension $1$ subcone of $\pmb\sigma(v)$. Therefore, dim $h(\tau_v) = n-1$ for all $v \in V(S)$ with $\pmb\sigma(v) \in \mathscr{P}$. In particular, the inclusion of cones $h(\tau'_{\phi(v)}) \subset h(\tau_v)$ induces an isomorphism of real vector spaces $h(\tau'_{\phi(v)})^{gp} \rightarrow h(\tau_v)^{gp}$.

Let $e \in E(S)\cup L(S)$ be an edge or leg in the spine, with associated contact order $\textbf{u}(e) \in \Lambda_{\pmb\sigma(e)}$. The image of $\tau_e$ in $\Gamma(G',l)$ lies in the subcomplex $\Gamma(S',l)$. If $e$ is not contracted by $\phi$, then by Lemma \ref{GStlemma}, $\pmb\sigma(e) = \pmb\sigma(\phi(e)) \in \mathscr{P}^{max}$ and the tangent vector $\textbf{u}(e)$ is not contained in $h(\tau'_{\phi(v)})^{gp}$ for any vertex $v \in E$, hence is not contained in $h(\tau_v)^{gp}$. Now suppose $v \in V(S)$ is contained in a non-contracted edge or leg $e \in E(S)\cup L(S)$. Then $\pmb\sigma(v) \in \mathscr{P}$ and $dim\text{ }h(\tau_v) = n-1$. Thus $h_t(v) \in B_0$ for $t \in int(\tau)$. By the tropical balancing condition defined in $B_0$, there must exist an edge $e' \in G$ containing $v$ such that $\pmb\sigma(e') \in \mathscr{P}^{max}$ and $\textbf{u}(e')$ is not contained in $h(\tau_v)^{gp}$. 

Now suppose additionally that $e' \notin E(S)\cup L(S)$. Then $e'$ cannot be a leg of $G$ so there must be a vertex $v' \not= v$ contained in $e'$. Since $dim\text{ }h(\tau_v) = n-1$, $dim\text{ }h(\tau_{e'}) = n$, and $dim\text{ }h(\tau_{v'}) \ge n-1$. Since $\pmb\sigma(e') \in \mathscr{P}^{max}$, we must have $h_t(v') \in B_0$ for $t \in int(\tau)$. By the same argument constraining the dimension of $h(\tau_v)$ for $v \in V(S)$, we have $dim\text{ }h(\tau_{v'}) = n-1$. Thus, $\textbf{u}(e') \notin h(\tau_{v'})^{gp}$, and by the tropical balancing condition, there must exist another edge $e''$ containing $v'$ such that $\pmb\sigma(e'') \in \mathscr{P}^{max}$, with $\textbf{u}(e'') \notin h(\tau_{v'})^{gp}$. By repeating this construction, we create an infinite chain of edges and vertices, a contradiction since the dual graph must be genus $0$. Thus, we must have $e' \in E(S) \cup L(S)$. Since $S$ is a linear subgraph with two legs, all edges $e \in E(S)$ must satisfy $\pmb\sigma(e) \in \mathscr{P}^{max}$ and $\textbf{u}(e) \notin h(\tau_v)^{gp}$ for all $v \in e$. In particular, $\pmb\sigma(v) \in \mathscr{P}$ for all $v \in V(S)$ as well.

Finally, let $v,v' \in V(S)$ be vertices contained in an edge $e \in E(S)$, with contact order $\textbf{u}(e)$ after picking an orientation going from $v$ to $v'$. Then for $t \in \tau$, $h_t(v')$ is determined by $h_t(v)$, given by the intersection of the straight ray starting at $v$ with outgoing vector $\textbf{u}(e)$, with the cone $h(\tau_{v'})$. By the transversality of $\textbf{u}(e)$ with respect to the cone $h(\tau_{v'})$, $h_t(e)$ uniquely determines $h_t(v')$. In particular, if $h_t(v) = 0$, then $h_t(v') = 0$. Proceeding by an induction on distance from $v$, we find that $h_t(v) = 0$ implies $h_t(v') = 0$ for all $v' \in V(S)$. 
\end{proof}

In Section $6$ of \cite{scatt}, Gross and Siebert show that when $(X,D)$ satisfies Assumption $1.1$ of loc cit.  the structure constants of the intrinsic mirror are weighted counts of pairs of broken lines meeting at a point, with weights involving the virtual counts of log maps marked by associated with two broken line types. Roughly speaking, the relationship between the two log geometric mirror constructions is that curves contributing to the structure constants in the intrinsic mirror symmetry construction degenerate to pairs of logarithmic curves glued along a zero stratum $s \in X$, which tropicalizes to a pair of broken lines intersecting at some generic point in the cone $\sigma \in \Sigma(X)$ associated with the stratum $s \in X$. This is described in Section $6$ of \cite{scatt}, and uses Lemma $7.8$ of \cite{int_mirror} in a key way to construct this degeneration. We will make use of a similar argument in the proof of Theorem \ref{mthm11}.

\section{Comparing the Trace Forms}

With most of the necessary background established, we turn our attention to the setting of our main theorem. As discussed at the end of Section $3$, both mirror algebras are equipped with the same identity element given by a theta function $\vartheta_0$, and both have a natural trace form by taking the $\vartheta_0$ coefficient of a product $\vartheta_{p_1}\cdots \vartheta_{p_k}$. Keel and Yu show that the trace form of their non-archimedean mirror recovers a certain naive count of curves. This naive count of curves, which we will now recall, is well defined without the torus assumption

Fix a log Calabi-Yau pair $(X,D)$ satisfying the conditions of Theorem \ref{mthm11}, with essential skeleton $B$. Let $\textbf{P} := (p_1,\ldots,p_k,0) \in B(\ZZ)^{k+1}$ with $p_i = m_id_i \in \sigma_{D_i} \in \mathscr{P}^{[1]}$ with $d_i \in \mathscr{P}^{[1]}$ primitive integral points along the $1$-dimensional cone associated with $D_i$, and  $m_i \in \ZZ_{\ge 0}$. Further, let $\mathscr{M}^{tr}(U,\textbf{A},\textbf{p})$ be the non-proper moduli space of genus $0$ stable curves in $X$ with $k+1$ marked points of class $\textbf{A}$ such that $x_i$ maps to the interior of the divisor $D_i$, and $f^{-1}(D) = \sum_i m_ix_i$ and $f(x_i) \in D_i \setminus \{\overline{D\setminus D_i}\}$. We note that the property $p_i \in \sigma_{D_i}$ is always satisfied after an appropriate toric blowup, and we define $\mathscr{M}^{tr}(U,\textbf{A},\textbf{p})$ to be the corresponding moduli space associated with rational curves in the blowup. We call integral points of $\sigma_{D_i} \in \mathscr{P}^{[1]}$ divisorial. Following this definition, Keel and Yu define in Notation $3.3$ of \cite{archmirror} a moduli space $\mathscr{M}^{sm}(U,\textbf{A},\textbf{p}) \subset \mathscr{M}^{tr}(U,\textbf{A},\textbf{p})$ as an open subvariety satisfying various open properties, but important for us is that the domain is a stable curve and $\tilde{f}^*(T_{\tilde{X}}(-log$ $\tilde{D}))$ is the trivial vector bundle for all curves $(\tilde{f},\tilde{C}) \in \mathscr{M}^{sm}(U,\textbf{A},\textbf{p})$. By Lemma $3.6$ of \cite{archmirror}, this open locus is smooth of dimension $k-2 + \text{dim }X$. The next proposition gives a point constraint and curve modulus criterion for guaranteeing a geometric point $Spec\text{ }\kk \rightarrow \mathscr{M}^{tr}(U,\textbf{A},\textbf{p})$ factors through $Spec\text{ }\kk\rightarrow \mathscr{M}^{sm}(U,\textbf{A},\textbf{p})$.

\begin{theorem}[\cite{archmirror}, Theorem 3.13]\label{smmod}
Let $\beta = (\textbf{p},\textbf{A})$, where $\textbf{A} \in NE(X)$, and $\textbf{p} = (p_1,\ldots,p_k,0)$ are contact orders contained in the Kontsevich-Soibelman skeleton of $D$, assumed to be divisorial. We denote the marked point with contact order $0$ by $x_{out}$. Then there is an open subset $V \subset U \times \mathscr{M}_{0,k+1}$ such that the following hold:
\begin{enumerate}
\item The preimage of $V$ by the map $\Phi_{x_{out}}: \mathscr{M}^{tr}(U,\textbf{A},\textbf{p}) \rightarrow U\times \overline{\mathscr{M}_{0,k+1}}$ is contained in $\mathscr{M}^{sm}(U,\textbf{A},\textbf{p})$. 
\item The map $\Phi_{x_{out}}$ is representable and finite \'etale over $V$. 

\end{enumerate}
\end{theorem}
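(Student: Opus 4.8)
The plan is to bootstrap from two facts in Section $3$ of \cite{archmirror}: that $\mathscr{M}^{sm}(U,\textbf{A},\textbf{p})$ is smooth of dimension $k-2+\dim X$ (Lemma $3.6$), triviality of $\tilde f^{*}(T_{\tilde X}(-\log\tilde D))$ on the genus-zero domain forcing the relevant $H^{1}$ to vanish; and that $U\times\overline{\mathscr{M}_{0,k+1}}$ also has dimension $n+k-2$, writing $n=\dim X$. So $\Phi_{x_{out}}$ restricts on $\mathscr{M}^{sm}(U,\textbf{A},\textbf{p})$ to a morphism of smooth varieties of equal dimension, and the theorem amounts to producing an open $V$ over which this restriction captures the full preimage and is finite étale. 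Throughout I write $\overline{\mathscr{M}}$ for a proper logarithmic Kontsevich space of genus-zero stable maps of class $\textbf{A}$ with the prescribed tangency profile, inside which $\mathscr{M}^{tr}(U,\textbf{A},\textbf{p})$ is open.

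First I would analyse $\Phi_{x_{out}}$ on the open locus $\mathscr{M}^{sm}(U,\textbf{A},\textbf{p})^{\circ}$ of curves with smooth domain. There, since $H^{1}(C,f^{*}T_{X}(-\log D))=0$ by triviality of the bundle and $H^{0}(C,T_{C}(-\sum_{i}x_{i}))=0$ as $k+1\ge 3$, the deformation space of the log map with its fixed contact profile sits in a short exact sequence $0\to H^{0}(C,f^{*}T_{X}(-\log D))\to\mathrm{Def}\to H^{1}(C,T_{C}(-\sum_{i}x_{i}))\to 0$, with $H^{0}(C,f^{*}T_{X}(-\log D))\cong\kk^{\,n}$ and $H^{1}(C,T_{C}(-\sum_{i}x_{i}))\cong T_{[C]}\mathscr{M}_{0,k+1}$. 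Since $x_{out}$ maps into $U$, evaluation of a section at $x_{out}$ identifies the first term with $T_{f(x_{out})}U$, and $d\Phi_{x_{out}}$ becomes the induced isomorphism $\mathrm{Def}\xrightarrow{\ \sim\ }T_{f(x_{out})}U\oplus T_{[C]}\mathscr{M}_{0,k+1}$. Hence $\Phi_{x_{out}}$ is étale on $\mathscr{M}^{sm}(U,\textbf{A},\textbf{p})^{\circ}$ and has open image $V_{0}\subset U\times\mathscr{M}_{0,k+1}$; and over $V_{0}$ the domain is a rigid $(k+1)$-pointed rational curve, so $\Phi_{x_{out}}$ is representable there.

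Next I would show that $\overline{\mathscr{M}}\setminus\mathscr{M}^{sm}(U,\textbf{A},\textbf{p})$ maps under $\Phi_{x_{out}}$ into a proper closed subset $Z_{0}\subset U\times\overline{\mathscr{M}_{0,k+1}}$. Stratify $\overline{\mathscr{M}}$ by the discrete invariants of a stable map: the dual graph of the domain, the distribution of $\textbf{A}$ over its components, the set of contracted components, whether a component maps into $D$, whether some $x_{i}$ leaves the open stratum $D_{i}^{\circ}$ (or $x_{out}$ meets $D$), and the splitting type of $f^{*}T_{X}(-\log D)$ on each noncontracted component. Because $(X,D)$ is log Calabi-Yau, $\deg f^{*}T_{X}(-\log D)=0$ on the total genus-zero class, so on a general member the bundle is trivial on the unique noncontracted component, and each stratum exhibiting any of these degeneracies acquires strictly positive codimension once one imposes the transverse $n$-dimensional constraint of fixing $f(x_{out})\in U$. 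Combining these estimates with the standard dimension bounds for boundary strata of stable-map spaces, refined by the logarithmic tangency data, gives $\dim\Phi_{x_{out}}(Z)<n+k-2$ for every such stratum $Z$; since $U\times\overline{\mathscr{M}_{0,k+1}}$ is irreducible, the finite union of these images lies in a proper closed $Z_{0}$. Putting $V:=V_{0}\setminus Z_{0}$, the inclusion $\Phi_{x_{out}}^{-1}(V)\subset\mathscr{M}^{sm}(U,\textbf{A},\textbf{p})$ of part $(1)$ holds by construction, and in fact the preimage lands in $\mathscr{M}^{sm}(U,\textbf{A},\textbf{p})^{\circ}$.

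For part $(2)$ it remains to see that $\Phi_{x_{out}}$ is proper over $V$; together with the étaleness and representability over $V_{0}\supset V$ from the second paragraph this yields finite étale. Apply the valuative criterion: given a discrete valuation ring $R$ with fraction field $K$, a morphism $\operatorname{Spec}R\to V$ and a lift over $\operatorname{Spec}K$ to $\mathscr{M}^{tr}(U,\textbf{A},\textbf{p})\subset\overline{\mathscr{M}}$, properness of $\overline{\mathscr{M}}$ extends the lift to a stable map over $R$ whose central fibre again maps under $\Phi_{x_{out}}$ into $V$; since $V$ avoids the image $Z_{0}$ of every degenerate stratum, that central fibre lies in $\mathscr{M}^{sm}(U,\textbf{A},\textbf{p})\subset\mathscr{M}^{tr}(U,\textbf{A},\textbf{p})$, so the lift extends and $\Phi_{x_{out}}$ is proper, hence finite, over $V$. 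The principal obstacle is the third step: one must establish simultaneously that nodal or partially contracted domains, components mapping into $D$, marked points escaping $D_{i}^{\circ}$, and nontrivial splitting types of $f^{*}T_{X}(-\log D)$ all strictly lower the dimension of the $\Phi_{x_{out}}$-image, and it is exactly here that the log Calabi-Yau hypothesis (degree zero on genus zero) and a careful accounting of the logarithmic tangency constraints carry the argument.
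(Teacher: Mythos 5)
This statement is not proved in the paper you are reading: it is quoted verbatim as \cite{archmirror} Theorem~3.13, and the paper uses it as a black box (it later comments on the ingredients of Keel--Yu's proof, mentioning their Lemma~3.11 and Lemma~5.11 of \cite{RCoqp}, but gives no argument of its own). So there is nothing in this paper to compare your proposal against; the comparison has to be with the structure of Keel--Yu's actual argument.

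Your overall shape --- unobstructedness of deformations on $\mathscr{M}^{sm}$ giving \'etaleness of $\Phi_{x_{out}}$ at smooth-domain points, a codimension estimate pushing the degenerate locus into a proper closed $Z_0$, and then the valuative criterion with properness of the ambient Kontsevich space to get finiteness --- is the right skeleton. But the decisive step, that every stratum exhibiting a degeneracy (nodal or contracted domain, component in $D$, marked point leaving $D_i^{\circ}$, jumping splitting type of $f^*T_X(-\log D)$) has $\Phi_{x_{out}}$-image of dimension strictly less than $n+k-2$, is exactly the content you defer with ``combining these estimates with the standard dimension bounds,'' and it is not a consequence of generic dimension counting together with the log Calabi--Yau degree-zero observation. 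In particular, nothing you write rules out a low-degree rational curve through $x$ that meets the boundary only once or not at all; such a component could persist in a family of expected dimension and ruin both the codimension estimate and the properness step. Keel--Yu exclude this via a genuine Mori-theoretic boundedness input using affineness of $U$: rational curves in $X$ through a general point of $U$ making fewer than two contacts with $D$ must have $L$-degree exceeding that of the class $\textbf{A}$ (their Lemma~3.11, which in turn relies on \cite{RCoqp} Lemma~5.11). You never invoke affineness of $U$, and without it the claimed codimension drop is simply false in general. A second, smaller point: the log Calabi--Yau condition only gives total degree zero of $f^*T_X(-\log D)$; showing that the locus of non-trivial splitting type has positive codimension after imposing the point constraint is itself part of what needs proving, not a ``general member'' observation. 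So the proposal is a reasonable outline but the crux is missing and the hypothesis that carries it (affineness) is not used.
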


Using this morphism, Keel and Yu define a trace $\eta: R^k \rightarrow S_X$ defined by:
\[ \eta(p_1,\ldots,p_k) := \sum_{\textbf{A} \in NE(X)} \eta(p_1,\ldots,p_k,\textbf{A})t^\textbf{A} \in R\]
 where $\eta(p_1,\ldots,p_k,\textbf{A})$ is the degree of $\Phi_{x_{out}}$. For notational purposes, we will consider the vector $\textbf{P}=(p_1,\ldots,p_k)$ of contact orders, and denote the above count by $\eta(\textbf{P},\textbf{A})$. We note that the proof of Theorem \ref{smmod} does not assume $U$ contains a Zariski dense torus. Assuming the dense torus exists however, it is shown in section $15$ of \cite{archmirror} that this trace is the $\vartheta_0$ component of the product of the non-archimedean theta function $\vartheta_{p_1}\cdots \vartheta_{p_k}$. 

The above theorem also says something about the log GW invariant we wish to compute. First, note that the data $\textbf{A}$ and $\textbf{p} = (p_1,\ldots,p_k,0)$ defines a tropical type $\beta$.  We will show that all curves in the first moduli space admit unique lifts to log curves of type $\beta$, hence to show that we have an embedding $\mathscr{M}^{tr}(U,\textbf{A},\textbf{p})\subset \mathscr{M}(X,\beta)$. For our purposes, we will only need the case in which the contact orders are associated with components of the boundary.
\begin{lemma}\label{tlift}
Consider a family of stable maps $(\underline{C}/\underline{S},\underline{f})$ defined by a morphism $\underline{S} \rightarrow \mathscr{M}^{tr}(U,\textbf{A},\textbf{p})$, with $\beta$ a tropical class with divisorial contact orders along legs. Then there exists a unique log structures on $C$ and $f$ which lift $(C/S,f)$. In particular, we can identify $\mathscr{M}^{tr}(U,\textbf{A},\textbf{p})$ with an open locus inside $\mathscr{M}^{log}(\tilde{X},\beta)$ consisting of log curves with decorated tropical type given by $\beta$.
\end{lemma}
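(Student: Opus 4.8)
The plan is to construct the log enhancement canonically and strata-locally, using the fact that for a stable map $\underline f\colon \underline C \to \underline X$ with $\underline f^{-1}(D) = \sum_i p_i x_i$ set-theoretically supported on the marked points $x_i$, the combinatorics of contact with the boundary is already rigidly prescribed. First I would observe that since each $p_i$ is divisorial, i.e. $p_i \in \sigma_{D_i} \in \mathscr{P}^{[1]}$, the tropical type $\beta$ has a single vertex, legs $x_1,\dots,x_k,x_{out}$ with $\mathbf u(x_i) = p_i \in \Lambda_{\sigma_{D_i}}$ and $\mathbf u(x_{out}) = 0$, and no bounded edges; so the claim is that the family of stable maps extends uniquely to a family of basic stable log maps whose tropicalization is constantly $\beta$. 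The natural candidate is the \emph{divisorial log structure}: equip $\underline C$ with the log structure from the divisor $\sum_i p_i x_i$ pulled back appropriately, and equip $\underline f$ with the morphism induced by $\underline f^* \mathcal O_X(-D_i) \hookrightarrow \mathcal O_{\underline C}$ vanishing to order exactly $p_i$ at $x_i$ (which is exactly the condition $\underline f^{-1}(D) = \sum p_i x_i$ in the definition of $\mathscr{M}^{tr}$). The point is that $\underline C$ is a smooth (or at worst nodal, but here the tropical type has one vertex so the generic fiber is smooth and we must be careful at the nodal locus in $\overline{\mathscr M}$) genus-zero curve and $D$ is snc, so there is no room for any ``hidden'' log structure: the characteristic monoid of $C$ at $x_i$ is $\mathbb N$, at a node it is the standard smoothing chart, and the map to $X$ lands in $D_i$ transversally with the prescribed order.

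The key steps, in order: (1) Reduce to the case $\underline S = \operatorname{Spec}\kk$, then to the étale-local or even formal-local picture around each point $x_i$ and each node; by smoothness of $D$ near the generic point of $D_i$ this is a toric-local computation. (2) At a marked point $x_i$ mapping into $D_i \setminus (\bigcup_{j\neq i} D_j)$, show the divisorial log structures on source and target and the evident map of monoid sheaves assemble into a log map, with characteristic $\mathbb N \to \mathbb N$ multiplication-by-$p_i$; use \cite{punc} or the standard theory to see this is the \emph{unique} log structure making $\underline f$ a log map with that contact order, up to unique isomorphism, because any such is determined by the image of the generator. (3) At a node of $\underline C$ (present only at the boundary of $\mathscr M^{tr}$, where $\underline C$ degenerates), since the single-vertex tropical type $\beta$ has no edges, the node cannot carry nontrivial length — it must be a node of the \emph{domain} that is contracted by $\underline f$ or else both branches map with trivial contact, so the log structure is the standard one pulled back from the base; here I would invoke that $\mathscr M^{tr}(U,\mathbf A,\mathbf p)$ is built from curves in $X$ with $f^{-1}(D)$ exactly on the marked points, so near a node $\underline f$ meets $D$ only along the marked points and the node is an ordinary smoothing node with no boundary interaction. (4) Globalize: these local log structures glue because they are canonical, giving a log curve $C/S$ and log map $f$; basicness follows because the tropical type $\beta$ has a single vertex, so the basic monoid is the one attached to $\beta$ and there is nothing to choose. (5) Identify the resulting functor of log lifts with the open locus in $\mathscr M^{log}(\tilde X,\beta)$ by checking the tropical type of each such log curve is $\beta$ (immediate from step 2) and conversely that a log curve of type $\beta$ restricts on underlying schemes to a point of $\mathscr M^{tr}$ (its contact orders are exactly $p_i$ at $x_i$, forcing $\underline f^{-1}(D) = \sum p_i x_i$).

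The main obstacle I expect is step (3) together with the global existence/uniqueness, i.e.\ controlling the log structure at the boundary of $\mathscr M^{tr}(U,\mathbf A,\mathbf p)$ where the domain curve becomes reducible: one must be sure that no reducible degeneration in $\overline{\mathscr M_{0,k+1}}$ forces a genuinely new vertex or bounded edge in the tropical type — equivalently, that on $\mathscr M^{tr}$ (as opposed to some larger compactification) all components of $\underline C$ map into $U$ or meet $D$ only at marked points, so that the tropical picture stays a single vertex. This is essentially the assertion that $f^{-1}(D) = \sum p_i x_i$ is preserved in families, which is part of how $\mathscr M^{tr}$ is defined, but spelling out that this forces the log structure to be the naive divisorial one (no log structure ``at infinity in moduli'') is where the care is needed; once that is pinned down, uniqueness is formal since divisorial log structures and the induced maps are canonical, hence descend and glue with no automorphisms beyond those of $(\underline C/\underline S,\underline f)$ itself. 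A secondary technical point is verifying the obstruction-theoretic/stack statement that this identification is an \emph{open} immersion $\mathscr M^{tr}(U,\mathbf A,\mathbf p) \hookrightarrow \mathscr M^{log}(\tilde X,\beta)$ rather than merely a bijection on points, which follows from the fact that the forgetful map from log maps to underlying maps is étale onto its image on the locus where the log structure is divisorial, since there the log structure carries no moduli.
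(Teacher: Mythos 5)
Your proposal is in the same spirit as the paper's proof — both construct the log enhancement by combining a ``divisorial'' piece (recording contact with $D$ at the marked points) with a log structure accounting for nodes — but the paper organizes this much more cleanly and you leave a couple of steps in a state where the care you flag as necessary is not actually supplied. The paper simply notes that the underlying family $\underline C \to \underline S$ gives a map $\underline S \to \mathbf M$ to the stack of prestable curves, pulls back the canonical log structures from $\mathbf M$ (\cite{LogGW} Appendix A.3) to get $\mathcal M^{pre}_C$ on $C$ and $\mathcal M_S$ on $S$ making $C \to S$ log smooth, observes that since $\underline f^{-1}(D)$ is a finite set there is a uniquely determined morphism $f^*\mathcal M_X \to \mathcal M_C^{div}$ into the divisorial log structure on the section divisor, and then defines $\mathcal M_C = \mathcal M^{pre}_C \oplus_{\mathcal O_C^\times} \mathcal M_C^{div}$. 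This single amalgamated-sum formula does in one line what your steps (1)--(4) do by hand in étale charts, and it handles families directly rather than reducing to a geometric point and then arguing the family case separately via an étaleness claim. Uniqueness in the paper is handled by citing \cite{decomp} Theorem 4.13, not by the ``divisorial structures have no moduli'' slogan.

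Two points in your write-up merit flagging as more than cosmetic. First, you phrase the candidate log structure as ``the log structure from the divisor $\sum_i p_i x_i$'' — but the divisorial log structure depends only on the support of a divisor; the multiplicities $p_i$ live in the morphism $f^*\mathcal M_X \to \mathcal M_C^{div}$, not in $\mathcal M_C^{div}$ itself. You do say this correctly a sentence later, so this is just an imprecision. Second, and more substantively, your discussion of nodes is confused in a way the paper's construction avoids: when the source curve is reducible, the tropical type of that geometric fiber is genuinely \emph{not} $\beta$ — it has extra vertices and edges, all mapping to the zero cone of $\Sigma(X)$, and those edges do carry nontrivial (freely varying) length, contributing to the basic monoid on the base. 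This is exactly what $\mathcal M^{pre}_C$ (and $\mathcal M_S$) records; it is not something you can argue away by saying ``$\beta$ has no edges so the node cannot carry nontrivial length.'' The curve is marked by $\beta$ in the sense that $\beta$ is a face of its tropical type, and the ``open locus'' language in the lemma is about that marking. Your closing worry — that nodal degenerations might ``force a genuinely new vertex or bounded edge'' — is precisely the point that $\mathcal M^{pre}_C$ is there to absorb, so the construction does not break; but you should resolve that worry by introducing the prestable log structure from $\mathbf M$, as the paper does, rather than by trying to rule out nodes combinatorially.
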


\begin{proof}
First, we note that the underlying family of prestable curve $C \rightarrow S$ is given by a morphism $S \rightarrow \textbf{M}$ to the stack of prestable curves over $Spec\text{ }\kk$, which by \cite{LogGW} Appendix $A.3$ is an algebraic stack admitting a log structure given by the basic log structure. Using this, we can equip $C$ and $S$ with log structures $\mathcal{M}^{pre}_C$ and $\mathcal{M}_S$ such that $C \rightarrow S$ is log smooth. Since $f: C \rightarrow X$ intersects the boundary at finitely many points, the pullback log structure $ f^*\mathcal{M}_X$ has a uniquely determined map to the divisorial log structure on the section divisor of $C$ which intersects $D \subset X$, which we denote by $\mathcal{M}_C^{div}$. We now equip $C$ with the log structure $\mathcal{M}_C = \mathcal{M}^{pre}_C \oplus_{\mathcal{O}_C^\times} \mathcal{M}_C^{div}$. With these log structures, $C \rightarrow S$ is log smooth, and we have a log lift $C \rightarrow X$ of the underlying morphism of schemes. Uniqueness of a log lift follows from an easy case of \cite{decomp} Theorem $4.13$.

\end{proof}

Thus, all curves contributing to the Keel-Yu trace defined above admit unique lifts to log curves. Around any such map $[C,f]$, by the triviality of $f^*(T_{\tilde{X}}^{log})$ and Lemma $3.6$ of \cite{archmirror}, the map $\mathscr{M}(\tilde{X},\beta) \rightarrow \mathscr{M}_{0,k+1} \times X$ is locally \'etale around a curve contributing to the Keel-Yu trace, and contributes once to log Gromov-Witten invariant $N_{p_1\ldots,p_k,0}^{\textbf{A}}$. The central question which needs to be resolved to compare the Keel-Yu trace with the log Gromov-Witten invariant $N_{p_1,\ldots,p_k,0}^\textbf{A}$ is that all curves in the the support of a generic representative of the above degree $0$ cycle are one of these curves, in particular do not have components contained in the boundary.

We first recall two definitions and two lemmas from \cite{int_mirror} necessary for our main theorem. First, in logarithmic geometry, we are in general interested in working with log flat families over a base. In order to give a notion of dimension in which log flat families have constant fiber dimension, we define log fiber dimension, defined originally in \cite{logdim}:

\begin{definition}\label{fibdim}
Let $f: X \rightarrow Y$ be a log flat morphism, with underlying scheme map locally of finite presentation and $x \in X$. Set $y = f(x)$, $X_y = X \times_{Spec\text{ }\kappa(y)} Spec\text{ }\kappa(x)$, where $\kappa(x)$ and $\kappa(y)$ are the residue fields of $X$ at $x$ and $Y$ at $y$ respectively. Now define the log fiber dimension of $f$:

\[
dim_x^{log}f^{-1}(f(x)) := 
\]
\begin{equation}
dim \text{ } \mathcal{O}_{X_y,x}/(\alpha(\mathcal{M}_{X,x} \setminus \mathcal{O}_{X,x}^{\times})) + tr.deg\text{ }\kappa(x)/\kappa(y) + rank\text{ }\overline{\mathcal{M}}_{X,x}^{gp} - rank\text{ }\overline{\mathcal{M}}_{Y,y}^{gp}.
\end{equation}
\end{definition}

In the above definition, note that the first term is the dimension of the local ring of $x$ in the stratum of $X_y$ containing $x$. In particular, if $x$ is a generic point of $X_y$, then $dim \text{ } \mathcal{O}_{X_y,x}/(\alpha(\mathcal{M}_{X,x} \setminus \mathcal{O}_{X,x}^{\times})) = dim\text{ }\mathcal{O}_{X_y,x}$.   The above definition may be extended to algebraic log stacks by taking appropriate smooth charts.

Now recall the definition of the ordinary fiber dimension:

\begin{equation}
dim_xf^{-1}(f(x)) = dim\text{ }\mathcal{O}_{X_y,x} + tr.deg  \text{ }\kappa(x)/\kappa(y).
\end{equation}

In general, only the former is locally constant for a log flat morphism. However, if $f$ is $\mathbb{Q}$-integral, as defined in \cite{Og} I.4.7.4, then by \cite{int_mirror} Proposition $A.7(2)$, the two are equal in an \'etale neighborhood of $x$. Moreover, the following definition and lemma indicate how we may produce such morphisms, starting from a morphism $X \rightarrow Y$ which is only log smooth:

\begin{definition}[\cite{int_mirror} Definition 2.6]
Let $f: X \rightarrow Y$ be a morphism of fs log stacks. A morphisms $g: W \rightarrow Y$ is \emph{transverse} to $f$ if $W\times^{fs}_Y X \rightarrow W$ is integral. 
\end{definition}

\begin{lemma}[\cite{int_mirror} Theorem 2.9]\label{tran}
Let $f: X \rightarrow Y$, $g: W \rightarrow Y$ be log morphisms between fs log stacks. Suppose further that $W$ is free, that is, the stalks $\overline{\mathcal{M}}_{W,w}$ are free monoids for all $w \in W$. Then $g$ is transverse to $f$ if for any geometric point $x \in X$, $y \in Y$ and $w \in W$ with $f(x) = y = g(w)$, and for any face $F$ of $\sigma_x$ we have that $\Sigma(g)^{-1}(\Sigma(f)(F)) \subset \sigma_w$ is a face of $\sigma_w$.
\end{lemma}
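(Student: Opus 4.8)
The plan is to reduce the statement, via functorial tropicalization, to a purely combinatorial condition on the fine--saturated fiber product of monoids, and then to check integrality cone by cone. First I would recall that a log morphism $h: Z \to W$ of fs log stacks is integral if and only if, at every geometric point $z \in Z$ with image $w$, the monoid map $\overline{\mathcal M}_{W,w} \to \overline{\mathcal M}_{Z,z}$ is integral in the sense of Kato, and that this in turn is equivalent to flatness of the induced map $\mathbb Z[\overline{\mathcal M}_{W,w}] \to \mathbb Z[\overline{\mathcal M}_{Z,z}]$. Dualizing, integrality is detected on the level of the cone complexes: the relevant fact is that for the $fs$ fiber product, the cone $\sigma_z$ of $W \times^{fs}_Y X$ over a point is the saturation of $\sigma_w \times_{\sigma_y} \sigma_x$, and the projection $\sigma_z \to \sigma_w$ being a map of cones along which integral points lift is precisely the integrality condition we must verify.

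Next I would translate the hypothesis. Since $W$ is free, $\sigma_w^\vee$ is a simplicial cone and $\overline{\mathcal M}_{W,w}$ a free monoid, so every face of $\sigma_w$ is cut out by setting a subset of the coordinate functions to zero; the faces of $\sigma_w$ are in bijection with faces of $\sigma_w^\vee$ in the obvious way. The hypothesis says: for every face $F$ of $\sigma_x$, the preimage $\Sigma(g)^{-1}(\Sigma(f)(F))$ is a face of $\sigma_w$. I would unwind what the fiber product $\sigma_w \times_{\sigma_y} \sigma_x$ looks like: a point of it over $\sigma_y$ is a pair $(a,b)$ with $\Sigma(g)(a) = \Sigma(f)(b)$ inside $\sigma_y$. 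The key observation is that the faces of this fiber product are exactly the fiber products $F' \times_{G'} F$ where $F \subseteq \sigma_x$, $F' \subseteq \sigma_w$, $G' \subseteq \sigma_y$ are faces with $\Sigma(f)(F) \subseteq G'$ and $\Sigma(g)(F') \subseteq G'$ in a compatible minimal way; the free hypothesis on $W$ guarantees $\sigma_w \times_{\sigma_y}\sigma_x$ is already saturated provided each such face-fiber-product is "lattice-saturated," and the hypothesis that $\Sigma(g)^{-1}(\Sigma(f)(F))$ is a face is exactly what forces the projection $\sigma_w \times_{\sigma_y}\sigma_x \to \sigma_w$ to be surjective onto a union of faces with the integral-points-lift property. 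So the argument is: given a face $F$ of $\sigma_x$, set $F'' := \Sigma(g)^{-1}(\Sigma(f)(F))$, which by hypothesis is a face of $\sigma_w$; then $F'' \times_{\Sigma(f)(F)} F$ is a face of the fiber product mapping onto $F''$, and as $F$ ranges over all faces these exhaust the face structure compatibly, which is equivalent to Kato's combinatorial criterion for integrality of $\sigma_w \times_{\sigma_y}\sigma_x \to \sigma_w$. Because $W$ is free, no further saturation is introduced, so $W \times^{fs}_Y X \to W$ equals $W \times^f_Y X \to W$ near these points and is integral.

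Finally I would assemble the local statements into the global one: integrality of $W \times^{fs}_Y X \to W$ is checked at geometric points, and we have verified it at an arbitrary geometric point $z$ lying over $x \in X$, $y \in Y$, $w \in W$; points of $W\times^{fs}_Y X$ not lying over any such triple are vacuous, so $g$ is transverse to $f$ in the sense of Definition 2.8.

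The main obstacle I anticipate is the precise bookkeeping in the middle step: showing that the face-by-face hypothesis on $\Sigma(g)^{-1}(\Sigma(f)(F))$ really is equivalent to, and not merely necessary for, Kato's integrality criterion applied to the pushout of monoids $\overline{\mathcal M}_{X,x} \leftarrow \overline{\mathcal M}_{Y,y} \to \overline{\mathcal M}_{W,w}$. This requires carefully using the freeness of $\overline{\mathcal M}_{W,w}$ to rule out the appearance of torsion or of "missing" lattice points upon saturation, and to match the combinatorial face condition with the flatness/integrality condition for the corresponding map of monoid algebras. I would handle this by passing to the dual picture of rational polyhedral cones, where integrality becomes the statement that every cone of $\sigma_w \times_{\sigma_y}\sigma_x$ maps onto a cone of $\sigma_w$, and invoking the standard dictionary between integral morphisms of fine monoids and surjective maps of the associated dual cone complexes.
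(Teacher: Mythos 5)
The paper does not prove this lemma --- it is imported verbatim from \cite{int_mirror} (Theorem 2.9) and simply cited, so there is no internal proof to compare against. The evaluation below is therefore of your sketch on its own merits.

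Your overall plan --- reduce to Kato's integrality criterion at geometric points and dualize to cones --- is the right kind of reduction and is likely in the spirit of the cited proof. But the crucial step, converting the face hypothesis on $\Sigma(g)^{-1}(\Sigma(f)(F))$ into Kato's criterion for the monoid pushout, is asserted rather than carried out, and you flag this yourself. More concretely, the auxiliary claim you use to close the argument is false: ``because $W$ is free, no further saturation is introduced, so $W\times^{fs}_YX\to W$ equals $W\times^f_YX\to W$ near these points.'' Take $\overline{\mathcal M}_{W,w}=\overline{\mathcal M}_{X,x}=\overline{\mathcal M}_{Y,y}=\mathbb N$, with the two structure maps multiplication by $2$ (into $\overline{\mathcal M}_{W,w}$) and by $3$ (into $\overline{\mathcal M}_{X,x}$). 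Here $W$ is free and the face condition is trivially satisfied --- every cone involved is a ray, and $\Sigma(g)^{-1}(\Sigma(f)(F))$ is either $\{0\}$ or the whole ray, both faces --- yet the fine pushout of monoids is the non-saturated numerical monoid $\langle 2,3\rangle=\{0,2,3,4,\dots\}$, whose saturation $\mathbb N$ is strictly larger. The lemma's conclusion does hold in this example (the map $\mathbb N\xrightarrow{\times 3}\mathbb N$ is integral), but not by way of an equality of fine and fs fiber products. A correct argument must either work directly with the fs pushout, or prove separately that integrality of the fine pushout persists under saturation; and it must genuinely establish Kato's criterion from the face condition, not merely identify it as the target. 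As written, your proposal locates the right ingredients but leaves both of these as gaps.
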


In our context, the log smooth morphism we consider is $ev_{x_{out}}: \mathfrak{M}^{ev}(\mathcal{X},\beta) \rightarrow X$. By the above lemma, after making an appropriate base change, we may guarantee the morphism is flat, and the log and ordinary fiber dimensions coincide, which will be key in constructing a sensible degeneration of the moduli stack $\mathfrak{M}^{ev}(\mathcal{X},\beta)_x$ studied in Step $4$ of the proof of Theorem \ref{mthm11}:

\begin{lemma}[\cite{int_mirror} Lemma 7.8]\label{flat}
Let $W \rightarrow X$ be a morphism of log schemes, which is transverse to $\mathfrak{M}^{ev}(\mathcal{X},\beta) \rightarrow X$. Letting $ev_{x_{out}}: \mathfrak{M}^{ev}(\mathcal{X},\beta) \rightarrow X$ be the morphism of log stacks which evaluates a log curve at the contact order $0$ marked point $x_{out}$, then the projection $\mathfrak{M}^{ev}(\mathcal{X},\beta)_W := W \times^{fs}_X \mathfrak{M}^{ev}(\mathcal{X},\beta) \rightarrow W $ is flat with equal log and standard fiber dimension.
\end{lemma}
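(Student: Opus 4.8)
The plan is to deduce Lemma \ref{flat} as a fairly direct corollary of the two ingredients just assembled: the transversality criterion of Lemma \ref{tran} and the general principle (from \cite{int_mirror} Proposition A.7(2)) that an integral, in fact $\mathbb{Q}$-integral, log flat morphism has locally constant ordinary fiber dimension equal to its log fiber dimension. First I would recall that $ev_{x_{out}}\colon \mathfrak{M}^{ev}(\mathcal{X},\beta)\to X$ is log smooth. This is essentially by construction: $\mathfrak{M}(\mathcal{X},\beta)$ is idealized log smooth over $\kk$ by \cite{punc} Theorem $3.24$, and $\mathfrak{M}^{ev}(\mathcal{X},\beta) = \mathfrak{M}(\mathcal{X},\beta)\times_{\underline{\mathcal X}}\underline X$ is obtained by base change along the underlying (strict) morphism $\underline X\to\underline{\mathcal X}$, so evaluation at the contact order $0$ point $x_{out}$ lands in $X$ with the log structure pulled back appropriately; the resulting $ev_{x_{out}}$ is log smooth, and since contact order $0$ marked points carry (étale-locally) no new monoid generators, the morphism is in particular log smooth with the combinatorial structure controlled by the cones $\sigma_x$ appearing in Lemma \ref{tran}.

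Next I would perform the base change. Given $W\to X$ transverse to $ev_{x_{out}}$ in the sense of Definition (\cite{int_mirror} Definition 2.6), the fs fiber product $\mathfrak{M}^{ev}(\mathcal{X},\beta)_W := W\times^{fs}_X\mathfrak{M}^{ev}(\mathcal{X},\beta)\to W$ is integral by the very definition of transversality. Moreover log smoothness is stable under fs base change, so this projection remains log smooth. An integral log smooth morphism is flat on underlying schemes — this is the standard fact (Kato) that integral log smooth morphisms are flat — so the projection is flat. For the fiber dimension statement, I would invoke that a log smooth morphism is $\mathbb{Q}$-integral once it is integral (the relevant monoid maps are integral, hence a fortiori $\mathbb{Q}$-integral in the sense of \cite{Og} I.4.7.4), and then apply \cite{int_mirror} Proposition A.7(2): for a $\mathbb{Q}$-integral, log flat morphism the ordinary fiber dimension $dim_x f^{-1}(f(x))$ and the log fiber dimension $dim^{log}_x f^{-1}(f(x))$ agree in an étale neighborhood of each point $x$. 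Since the log fiber dimension is locally constant for any log flat morphism (this is the raison d'être of Definition \ref{fibdim}, again \cite{logdim}), the ordinary fiber dimension of $\mathfrak{M}^{ev}(\mathcal{X},\beta)_W\to W$ is locally constant and equal to the log fiber dimension, which is exactly the assertion.

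The only genuinely delicate point is verifying that $ev_{x_{out}}$ is log smooth with the correct charts — i.e. that evaluating at a contact order $0$ point really does produce a log smooth morphism to $X$ rather than merely to the Artin fan or to $X\times B\mathbb{G}_m$. This is where I would lean on \cite{int_mirror} Proposition $3.3$ (already recalled in Section 2 of this excerpt), which supplies the morphism $\mathscr{M}(X,\beta)\to X\times B\mathbb{G}_m$ and the lift of the projection to a log map, together with the idealized log smoothness of $\mathfrak{M}(\mathcal{X},\beta)$ from \cite{punc}; the upshot is that $\mathfrak{M}^{ev}(\mathcal{X},\beta)\to X$ inherits log smoothness from $\mathfrak{M}(\mathcal{X},\beta)\to\underline{\mathcal X}\times B\mathbb{G}_m$ after the strict base change to $X$. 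Once log smoothness is in hand, everything else is a formal chain: transverse $\Rightarrow$ integral $\Rightarrow$ flat and $\mathbb{Q}$-integral $\Rightarrow$ log and ordinary fiber dimensions coincide and are locally constant. I expect the main obstacle, such as it is, to be bookkeeping the idealized log structure correctly so that "log flat with locally constant log fiber dimension" applies verbatim; the cited results \cite{int_mirror} Proposition A.7(2) and Lemma \ref{tran} are designed precisely to absorb this, so in practice the proof should be short.
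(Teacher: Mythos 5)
Your proposal is correct and follows essentially the same route as the paper: log smoothness of $ev_{x_{out}}$, transversality giving integrality of the fs base change, integral plus log flat/log smooth giving flatness, and Proposition A.7(2) of \cite{int_mirror} giving the coincidence of log and ordinary fiber dimensions. The only cosmetic difference is that the paper cites \cite{int_mirror} Proposition 2.3(2) (integral and log flat implies flat) where you invoke Kato's theorem that integral log smooth morphisms are flat; both routes are correct and amount to the same thing here.
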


\begin{proof}
Since $ev_{x_{out}}: \mathfrak{M}^{ev}(\mathcal{X},\beta) \rightarrow X$ is log smooth, hence is log flat, by definition of transversality and log smooth morphisms being preserved under fine and saturated pullback, the projection $W \times^{fs}_X \mathfrak{M}^{ev}(\mathcal{X},\beta) \rightarrow W$ is integral and log flat. By \cite{int_mirror} Proposition $2.3(2)$, this projection is flat. The equality of log and standard fiber dimension follows from Proposition $A.7(2)$ of \cite{int_mirror}.
\end{proof}
Finally, we note that the definition of $\eta(\textbf{P},\textbf{A})$ is a naive count of curves on a blowup $\tilde{X}$ of $X$ along strata. We note the following birational invariance property for the log \'etale invariants of interest:

\begin{lemma}\label{birinv}
Let $p: \tilde{X} \rightarrow X$ be a log \'etale modification such, and after identifying $\Sigma(X)(\NN)$ with $\Sigma(\tilde{X})(\NN)$, let $\textbf{P}$ denote contact orders on $\tilde{X}$ as well. Then for any $\textbf{A} \in NE(X)$, there is a unique class $\textbf{A}' \in NE(\tilde{X})$ such that the following equation holds:

\[N_{\textbf{P}}^{\textbf{A}} = \sum_{p_*(\textbf{A}')=\textbf{A}} N_{\textbf{P}}^{\textbf{A}'}.\]

\end{lemma}

\begin{proof}
This essentially follows from the main theorem of \cite{bir_GW}, or via cosmetic changes to the proof of a special case of Corollary $1.6$ of \cite{pbirinv}. We provide a proof for completeness. 

Note that since $\mathscr{M}(X,\textbf{P})_x = \mathscr{M}(X,\textbf{P})\times_x X$ and the obstruction theory for the morphism $\mathscr{M}(X,\textbf{P})_x \rightarrow \mathfrak{M}^{ev}(\mathcal{X},\textbf{P})_x$ is defined by pulling back the relative obstruction theory for the morphism $\mathscr{M}(X,\textbf{P}) \rightarrow \mathfrak{M}^{ev}(\mathcal{X},\textbf{P})$, the pushforward of the virtual fundamental class $[\mathscr{M}(X,\textbf{P})_X]^{vir}$ to $A_*(\mathscr{M}(X,\textbf{P}))$ is the virtual pullback of the Chow class of the closed substack $\mathfrak{M}^{ev}(\mathcal{X},\textbf{P})\times_{X} x \subset \mathfrak{M}^{ev}(\mathcal{X},\textbf{P})$.
This Chow cycle in turn is given by $[\mathfrak{M}^{ev}(\mathcal{X},\textbf{P})]\cap ev^*_{x_{out}}([pt])$. We thus have the alternate definition for the invariants of interest:
\[N_{\textbf{P}}^{\textbf{A}} = \int_{[\mathscr{M}(X,\textbf{P},\textbf{A})]^{vir}} \psi_{x_{out}}^{k-2}\cup ev_{x_{out}}^*([pt]).\]
A similar expression holds for $N_{\textbf{P}}^{\textbf{A}'}$ for $\textbf{A}' \in NE(\tilde{X})$. Abusing notation slightly in reference to evaluation maps, we also have $p^*ev_{x_{out}}^*([pt]) = ev_{x_{out}}^*([pt])$. It now follows from Corollary $1.3.1$ of \cite{bir_GW} that $N_{\textbf{P}}^\textbf{A} = \sum_{p_*(\textbf{A}') = \textbf{A}} N_{\textbf{P}}^{\textbf{A}'}$. 

\end{proof}

We note that the conditions assumed in Theorem \ref{mthm11} also hold for a log \'etale modification $(\tilde{X},\tilde{D})$. Thus, if we know Theorem \ref{mthm11} holds for $\tilde{X}$ with discrete data $\textbf{P}$ and a curve class $\textbf{A}$, then after identifying the integral points of the essential skeletons of $\tilde{X}$ and $X$, Theorem \ref{mthm11} holds for $X$ with discrete data $\textbf{P}$ and curve class $\pi_*(\textbf{A})$ by definition of $\eta(\textbf{P},\textbf{A})$. Since there exists a blowup $\tilde{X} \rightarrow X$ such that the resulting contact orders $p_i \in \Sigma(\tilde{X})$ are divisorial, we may assume the contact orders $p_i \in \Sigma(X)(\NN)$ are divisorial.

With the necessary preliminary definitions and lemmas established, we now proceed to the proof of our main theorem:

\begin{proof}[Proof of Theorem \ref{mthm11}]

The proof will proceed by carefully picking a degeneration of our point constraint so that it induces a well behaved degeneration of the relevant point constrained moduli spaces, carried out in step $3$. After constructing this degeneration, in steps $4$ and $5$ with input from step $2$ to provide a modular interpretation of the insertion $\psi_{x_{out}}^{k-2}$ appearing in the definition of $N_{p_1,\ldots,p_k,0}^{\textbf{A}}$, we will study constraints on the tropical behavior of the special fiber, and use this to restrict the behavior of the general fiber. These constraints, combined with those coming from Step $1$ due to the semipositivity assumption i.e. the existence of a nef divisor $\sum_i a_iD_i$ with $a_i > 0$, are enough to constrain all possibly contributing curves with a general point condition to the smooth locus studied by Keel and Yu, from which the conclusion will follow. 

\textbf{Step 1: No unmarked rational tree in boundary}

We first show that certain types of subtrees of rational curves cannot appear in curves $f: C \rightarrow X$ associated to a point of $\mathscr{M}(X,\beta)$. Specifically, letting $\tau = (G_\tau,\pmb\sigma,\textbf{u})$ be the tropical type of $f:C\rightarrow X$, we claim that if $v \in V(G_\tau)$ is connected to a vertex $v_0$ by an edge $e$ such that $\pmb\sigma(v_0) = 0 \in \Sigma(X)$, and $\pmb\sigma(v)\not= 0$, then the closure of the component of $G_\tau\setminus \{e\}$ containing $v$ must contain a leg. We highlight an example of the disallowed behavior in the figure below.

\begin{figure}[h]
\centering
\begin{tikzpicture}
\fill[white!70!blue, path fading = north] (0,0)--(-3,0)--(-3,3)--(0,3)--cycle;
\fill[white!70!blue, path fading = north] (0,0)--(3,0)--(3,3)--(0,3)--cycle;
\fill[white!70!blue, path fading = south] (0,0)--(3,-3)--(3,0)--cycle;
\fill[white!70!blue, path fading = south] (0,0)--(3,-3)--(-3,-3)--(-3,0)--cycle;
\draw[black] (0,0)--(0,3);
\draw[black] (0,0)--(3,0);
\draw[black] (0,0)--(-3,0);
\draw[black] (0,0)--(0,-3);
\draw[ball color = red] (0,0) circle (0.5mm);
\draw[->, color = red] (0,0)--(0,3);
\draw[->, color = red] (0,0)--(-3,-3);
\draw[-,color = red] (0,0)--(1,0);
\draw[ball color = red] (1,0) circle (0.5mm);
\draw[-,color = red] (1,0)--(2,1);
\draw[ball color = red] (2,1) circle (0.5mm);
\draw[-,color = red] (1,0)--(2,-1);
\draw[ball color = red] (2,-1) circle (0.5mm);

\end{tikzpicture}
\caption{An example of behavior disallowed by Step $1$. The tail without legs is disallowed by the semipositivity assumption.}
\end{figure}

 The key input to this constraint is the semipositive assumption placed on the boundary, together with the logarithmic balancing condition of stable log maps.  First, let $D' = \sum_{i} a_iD_i$ with $a_i >0$ be the nef divisor supported on $D$, and suppose $v$ is not contained in the spine of $G_\tau$. Letting $q_e \in C$ be the node associated to the edge $e$, consider the closure of the connected component of the $C \setminus \{q_e\}$ which contains the generic point of the $C_v \subset C$. By restricting the log structure on $C$ to this union of components, we produce a punctured log curve $C' \rightarrow X$. Since this curve does not contain a marked point inherited from $C$, $C'$ has only one punctured point corresponding to the former node $q_e$. Since $\pmb\sigma(v_0) = 0$ but $\pmb\sigma(v) \not= 0$, by orienting the edge to point towards $v_0$, we must have $\textbf{u}(e)(D_i)\le 0$ for all $i$, and $\textbf{u}(D_j)<0$ for some $j$.  Thus, by applying the logarithmic balancing condition of \cite{punc} Proposition $2.27$, we find:
\[C' \cdot D' = \sum_i a_i \textbf{u}(e)(D_i) < 0.\]
This contradicts the assumption that $D'$ is nef.

\textbf{Step 2: $\psi$ class as fixing modulus of domain} 

Let $Forget: \mathscr{M}(X,\beta)_x \rightarrow \overline{\mathscr{M}}_{0,k+1}$ be the morphism corresponding to the stabilization of the universal family, and denote by $\psi_{x_{out}}$ and $\overline{\psi}_{x_{out}}$ the first Chern class of cotangent line at $x_{out}$ of $\mathscr{M}(X,\beta)_x$ and $\overline{\mathscr{M}}_{0,k+1}$  respectively. We wish to show that the equality of insertions:
\[Forget^*\overline{\psi}_{x_{out}} = \psi_{x_{out}} \in A^1(\mathscr{M}(X,\beta))_x\]
Standard results regarding $\psi$ classes imply the divisor $\psi_{x_{out}} - Forget^*(\overline{\psi}_{x_{out}})$ on $\mathscr{M}(X,\beta)_x$ is supported on the locus consisting of curves where the component containing $x_{out}$ is destabilized by the forgetful map, remembering only the underlying family of curves of the source. We wish to show that this class is zero. Let $[C,(x_i),x_{out},f] \in \mathscr{M}(X,\beta)_x$ be a geometric point, and note that the point constraint enforces $f(x_{out}) = x \in \pi_X(V) \subset U$. As part of the construction of $V$ in Section $3$ of \cite{archmirror}, specifically in Lemma $3.11$ which itself uses Lemma $5.11$ of \cite{RCoqp}, by the generic choice of $x$, for any rational curve $C''\subset X$ containing $x\in X$ which makes contact with the boundary at most once, the degree of $C''$ with respect to a choice of ample line bundle $L$ is strictly greater than $L\cdot \textbf{A}$. Since the component $C' \subset C$ containing $f(x_{out}) = x$ must have degree bounded above by $L\cdot \textbf{A}$, $C'$ must make contact with the boundary at at least two distinct points. Since the restriction of the log structure on $C$ to $C'$ gives a punctured log map which generically maps to $U$, all points of contact of $C'$ with the boundary must either be a marked point or a node. Picking two distinct points of $C'$ making contact with the boundary, if both are marked points, then the underlying component is stable since then $C'$ will have at least $3$ marked points. On the other hand, if one of the points is a node $q$, then by taking the complement of the node and the closure of the component of $C$ which does not contain $C' \setminus \{q\}$, by step $1$, this curve must contain a marked point. Hence, the image of $q$ under the stabilization map is still a special point. Thus, the component $C'$ is not contracted after stabilizing the underlying curve. Thus, $\psi_{x_{out}} - Forget^*(\overline{\psi}_{x_{out}}) = 0$, and we may replace $\psi_{x_{out}}$ in the definition of $N_{p_1,\ldots,p_k,0}^\textbf{A}$ with $Forget^*(\overline{\psi}_{x_{out}})$.

\textbf{Step 3: Degenerate point constraint}

We will study the log Gromov-Witten invariant of interest by degenerating the point constraint, similar to arguments appearing in \cite{scatt} Section $6$. Slightly more precisely, we will consider a family of point constraints which specialize to a zero stratum of $D$. We will see that with some care, this has the effect of degenerating our point constrained moduli spaces, giving a refinement of invariant of interest. 

Let $s \in X$ be a zero dimensional good stratum, with $\overline{\mathcal{M}}_{X,s} = P_s$ and corresponding cone $P^{\vee}_{s,\mathbb{R}} = \sigma_s \in \Sigma(X)$. Since $(X,D)$ is log smooth, there exists an \'etale map $i: O \rightarrow X$ which $s: Spec \text{ }\kk \rightarrow X$ factors through, and $j: O \rightarrow Spec\text{ }\kk[P_s]$ a strict \'etale map with $s \in O$ mapping to $0 \in Spec\text{ }\kk[P]$. Note we may let $O$ be connected. Now consider an element $u \in int(\sigma_s)\cap \sigma_{s,\NN}$, and note this element, together with a group homomorphism $\chi: P_s^{gp} = \ZZ^n \rightarrow \kk^\times$, determines a monomial map $Spec\text{ }k[t] \rightarrow Spec\text{ }\kk[P_s]$, defined by the ring homomorphism $u^*:\kk[P_s] \rightarrow \kk[t]$ given by $u^*(p) = \chi(p)t^{u(p)}$, for a choice of local coordinate $t$ of $0 \in \mathbb{A}^1$. There exists an \'etale neighborhood of $0 \in \mathbb{A}^1$, $U$, which factors through the \'etale morphism $O \rightarrow \kk[P_s]$. By picking $\chi$ generically, we can ensure that the resulting map $U \rightarrow X$ has image intersecting the Zariski dense subset $\pi_X(V) \subset X$. Indeed, we can identify a group homomorphism $\chi$ with a closed point of $Spec\text{ }\kk[P_s^{gp}] \subset Spec\text{ }\kk[P_s]$, and the corresponding morphism $\mathbb{A}^1 \rightarrow Spec\text{ }\kk[P_s]$ maps $1$ to $\chi \in Spec\text{ }\kk[P_s^{gp}]$. Moreover, since $O$ is connected, $j^{-1}(\mathbb{G}_m^n)$ is a Zariski dense open subset of $O$, hence has non-trivial intersection with $i^{-1}(\pi_X(V))$. It suffices to pick $\chi$ to be any geometric point in the image of this non-empty open subset. Letting $T = Spec\text{ } \mathcal{O}_{U,0}$, we have an induced morphism of schemes $T \rightarrow X$ such that the unique closed point of $T$ maps to $s\in X$. 

We wish to upgrade $T \rightarrow X$ to a morphism of log schemes. First, pick a morphism of monoids $g: P_s \rightarrow \mathbb{N}^n$, such that $g^t$ is injective, where $g^t:(\mathbb{N}^n)^\vee  \rightarrow  \overline{\mathcal{M}}_{X,s}^\vee$, let $u \in im(int((\mathbb{N}^n)^\vee))$, and $u' = (g^t)^{-1}(u)$. Note that this condition implies $g^{gp}: P^{gp}_s \rightarrow \mathbb{N}^n$ is an isomorphism. We equip $T$ with the log structure given by the global chart:

\[\alpha: \mathbb{N}^n \rightarrow \kk[t], \text{   } \alpha(e_i) = (\chi\circ(g^{gp})^{-1}(e_i))t^{u'(e_i)}.\]

By working in the \'etale chart around $s$, the morphism $T \rightarrow X$ is now easily seen to extend to a morphism of log schemes with $T$ equipped with the above log structure, special fiber mapping to $s$, and generic fiber mapping into $V \subset X$. Finally, after recalling we have replaced $\mathfrak{M}^{ev}(\mathcal{X},\beta)$ with the complement of all strata whose closures do not intersect the image of $\mathscr{M}(X,\beta) \rightarrow \mathfrak{M}^{ev}(\mathcal{X},\beta)$, the moduli stack $\mathfrak{M}^{ev}(\mathcal{X},\beta)$ is finite type. In particular, $\Sigma(\mathfrak{M}^{ev}(\mathcal{X},\beta))$ contains only finitely many distinct cones, and using the criterion of Lemma \ref{tran}, we may pick $u \in int(\sigma_s)$ and the initial monoid morphism $g$ so that the induced morphism $T \rightarrow X$ is transverse to the evaluation morphism $ev_{x_{out}}: \mathfrak{M}^{ev}(\mathcal{X},\beta) \rightarrow X$.

\textbf{Step 4: Construction of interpolating moduli space}

In this step, we will study how the degenerating point constraint induces a degeneration of the virtual geometry of the moduli space $\mathscr{M}(X,\beta)_x$, and study the tropical types associated to various virtual components of this degeneration. Similar arguments appear in \cite{scatt} Theorem $6.4$ Step $2$.

Recalling that we have morphisms of log stacks $ev_{x_{out}}:\mathscr{M}(X,\beta) \rightarrow X$ and $ev_{x_{out}}: \mathfrak{M}^{ev}(\mathcal{X},\beta) \rightarrow X$, we take a fine and saturated pullback to produce the moduli space $\mathscr{M}(X,\beta)_T := \mathscr{M}(X,\beta) \times_X^{fs} T$ and $\mathfrak{M}^{ev}(\mathcal{X},\beta)_T:= \mathfrak{M}^{ev}(\mathcal{X},\beta) \times_X^{fs} T$ of stable log curves of type $\beta$ with point constraint $T$. By Lemma \ref{flat}, flatness of $\mathfrak{M}^{ev}(\mathcal{X},\beta)_T$ over $T$ is guaranteed by the transversality condition assumed above. By taking the point constraint to be the generic point $\eta \rightarrow T$ instead, we also produce the moduli spaces $\mathscr{M}(X,\beta)_{\eta}$ and $\mathfrak{M}^{ev}(\mathcal{X},\beta)_{\eta}$. 
We summarize these constructions in the following fs cartesian diagram in fs log stacks:

 \[\begin{tikzcd}
 \mathscr{M}(X,\beta)_\eta \arrow{r}\arrow{d} & \mathscr{M}(X,\beta)_T \arrow{r}\arrow{d} & \mathscr{M}(X,\beta)\arrow{d} \\
 \mathfrak{M}^{ev}(\mathcal{X},\beta)_\eta \arrow{r}\arrow{d} & \mathfrak{M}^{ev}(\mathcal{X},\beta)_T \arrow{r}\arrow{d} & \mathfrak{M}^{ev}(\mathcal{X},\beta) \arrow{d} \\
 \eta \arrow{r} & T \arrow{r} & X
 \end{tikzcd}\]
 
After identifying $s$ with the unique closed point of $T$, we may similarly form the moduli spaces $\mathscr{M}(X,\beta)_s$ and $\mathfrak{M}^{ev}(\mathcal{X},\beta)_s$ as well, which will all be flat and integral over $Spec$ $\kk$ equipped with the pullback log structure from $T$. 

We now wish to understand the tropical type $\tau$ associated with a generic point $\xi$ of $\mathfrak{M}^{ev}(\mathcal{X},\beta)_s$. Specifically, we wish to show dim $\tau = n$. Since the point constraint $s \rightarrow X$ is chosen to be transverse to the evaluation map $ev: \mathfrak{M}^{ev}(X,\beta) \rightarrow X$, the pullback $\mathfrak{M}^{ev}(\mathcal{X},\beta)_s \rightarrow s$ is integral. Hence, by Lemma \ref{flat}, the log and standard fiber dimension of $\mathfrak{M}^{ev}(\mathcal{X},\beta)_s \rightarrow s$ coincide. In particular, after recalling definition of log fiber dimension in Definition \ref{fibdim} and noting $\xi$ is a generic point of $\mathfrak{M}^{ev}(\mathcal{X},\beta)_s$, for $Q_\xi$ the stalk of the ghost sheaf at the generic point, we must have:
\[\text{rank }Q_\xi^{gp} =\text{ rank }\overline{\mathcal{M}}_{s,s}^{gp} = \text{ rank }\overline{\mathcal{M}}_{X,s}^{gp} = \text{ dim }X.\]
Note by construction we have $Q_{\xi,\mathbb{R}}^\vee = \tau \times_{\sigma_s} (\mathbb{N}^n)^\vee_{\mathbb{R}}$ as a cone, with $\tau \rightarrow \sigma_s$ given by evaluation at $v_{out}$, the vertex adjacent to the leg corresponding to $x_{out}$, and $(\mathbb{N}^n)^\vee \rightarrow \sigma_s$ the transpose of the map of ghost sheaves of the point constraint defined above. This is the tropical moduli space of tropical curves of type $\tau$ with $v_{out}$ mapping into the image of $\mathbb{R}^n_{\ge 0} \rightarrow \sigma_s$. By construction of the logarithmic point constraint, we have $\mathbb{R}^n \rightarrow \sigma_s^{gp}$ is an isomorphism of real vector spaces, and the interior of the image of $\mathbb{R}_{\ge 0}^n$  is contained in the image of the interior of $ev_{v_{out}}: \tau \rightarrow \sigma_{s}$. Thus, there exists a point $q \in Q_{\xi,\mathbb{R}}^\vee$ mapping into the interior of $\tau$ and $\mathbb{R}_{\ge 0}^n$ under the projections, and the tangent space at such a point is given by $\tau^{gp} \times_{\sigma_s^{gp}} \mathbb{R}^n = \tau^{gp}$. In particular, dim $\tau = $ dim $h(\tau_{v_{out}}) = n$.

Now suppose that $v_{out}$ is at least $k+1$ valent in $G_\tau$, recalling that $k$ is the number of input marked points. Since dim $h(\tau_{v_{out}}) = n$, $v_{out}$ must be exactly $k+1$ valent. Indeed, note that since the genus of $G_\tau$ must be $0$, there exists a connected component of $G_\tau \setminus \{v_{out}\}$ which does not contain a leg of $G_\tau$. By Lemma \ref{GStlemma}(1), the tropical type associated with this connected component must have a unique leg spanning at most an $n-1$ dimensional cone. Since $h_\tau(v_{out})$ must be contained in a subcone of dimension bounded above by $n-1$, this would contradict $dim\text{ }h(\tau_{v_{out}}) = n$. 

By cutting the corresponding tropical type at all edges containing $v_{out}$, we produce $k$ broken line types. To see this, first note a degenerate case in which one of the edges adjacent to $v_{out}$ is in fact a leg, hence contains no vertex. In this case, we immediately produce a degenerate broken line type. Now suppose this is not the case for some edge containing $v_{out}$. By replacing this former edge with a punctured leg, which we call $L_{out}$, and taking the resulting tropical type whose dual graph does not contain $v_{out}$, this new tropical type $\tau_i$ has two legs, the new one we have identified, and $L_{in}$ which was a leg in the original tropical type. Moreover, as $v_{out}$ varies in an $n$ dimensional cone, dim $h(\tau_{i,L_{out}}) = n$, and dim $\tau_i \ge n-1$. We must have that dim $\tau_i < n$ however, or else by gluing the families, we would have dim $\tau > n$, a contradiction. Since $\tau_i$ inherits conditions of realizability and balancing from $\tau$, $\tau_i$ is indeed a broken line type.

\textbf{Step 5: Support of $\psi$ class contained in smooth locus}

In this step, we will derive sufficient restrictions on possible contributions to our enumerative problem to constrain them to the smooth locus used in the definition of the naive count $\eta(\textbf{P},\textbf{A})$. Before proceeding, we establish some useful language. Recall that $\overline{\psi}_{x_{out}}^{k-2} \in A^{k-2}(\overline{\mathscr{M}}_{0,k+1})$ is Poincar\'e dual to the class of a point $[pt] \in A_0(\overline{\mathscr{M}}_{0,k+1})$. Thus, to pick a  generic representative for $\overline{\psi}^{k-2}$ condition, we pick a generic point $[\overline{C}] \in \mathscr{M}_{0,k+1}$. Observe then that the class $[\mathscr{M}(X,\beta)_x]^{vir}\cap \overline{\psi}^{k-2}_{x_{out}} \in A_0(\mathscr{M}(X,\beta))$ is supported on the closed substack $\mathscr{M}(X,\beta)_x\times_{\overline{\mathscr{M}}_{0,k+1}} [\overline{C}]$ for a generic choice of $[\overline{C}] \in \mathscr{M}_{0,k+1}$. We say a map $C \rightarrow X$ over a scheme $S$ satisfies the generic $\overline{\psi}$ class condition if the corresponding morphism $S \rightarrow \mathscr{M}(X,\beta)_x$ factors through the closed substack $\mathscr{M}(X,\beta)_x\times_{\overline{\mathscr{M}}_{0,k+1}} [\overline{C}]$. Having established language, we will now show how the $\psi$ class condition restricts the possible tropical behavior of contributing curves:

\begin{lemma} 
Consider a curve $(C/\eta',f)$ associated to a point of $\mathscr{M}(X,\beta)_{\eta}$ satisfying a generic $\overline{\psi}_{out}$ class condition. Letting $(G,l)$ be the dual graph of $C_\eta$, and $v \in V(G)$ be in the spine of $G$, then $\pmb\sigma(v) = 0$. 
\end{lemma}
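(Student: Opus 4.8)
```latex
\textbf{Proof proposal.} The plan is to combine the degenerate point constraint from Step 4--5 with the dimension computation $\dim h(\tau_{v_{out}}) = n$ and the tropical rigidity of broken line types. Recall from Step 5 that after pulling back to the special fiber $s$, a generic point $\xi$ of $\mathfrak{M}^{ev}(\mathcal{X},\beta)_s$ corresponds to a tropical type $\tau$ with $\dim \tau = \dim h(\tau_{v_{out}}) = n$, and cutting at the (exactly $k+1$) edges at $v_{out}$ produces $k$ broken line types $\tau_1,\ldots,\tau_k$. The key point is that imposing a generic $\overline\psi_{x_{out}}^{k-2}$ class condition fixes the stabilized domain curve $[\overline C] \in \mathscr{M}_{0,k+1}$, which by Step 2 (where $\psi_{x_{out}} = Forget^*\overline\psi_{x_{out}}$) pins down the combinatorial position of $x_{out}$; I would argue that this forces the component $C_{v_{out}}$ carrying $x_{out}$ to be the unique stable component, so that every vertex of the spine other than $v_{out}$ must have its component contracted by $f$, hence mapped to a stratum of $X$.

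The main steps, in order: First, reduce to the special fiber: a curve $(C/\eta',f)$ over $\mathscr{M}(X,\beta)_\eta$ satisfying the generic $\overline\psi$ condition degenerates, as $\eta' \to s$, to a curve over $\mathscr{M}(X,\beta)_s$ whose tropical type contains that of the generic curve as a face; so it suffices to control vertices of the spine of a generic point of $\mathfrak{M}^{ev}(\mathcal{X},\beta)_s$. Second, invoke the dimension count of Step 5: $\dim h(\tau_{v_{out}}) = n$, so $v_{out}$ is exactly $(k+1)$-valent and cutting at $v_{out}$ yields broken line types. Third, apply Corollary \ref{tlemma}: for each broken line type $\tau_i$ and each spine vertex $v$ of $\tau_i$ we have $\dim h(\tau_{i,v}) = n-1$ and $\mathbf{u}(e) \notin h(\tau_{i,v})^{gp}$ for spine edges $e \ni v$; together with the rigidity statement that $h_t(v) = 0$ for one spine vertex forces $h_t(v') = 0$ for all spine vertices of $\tau_i$. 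Fourth, use the generic $\overline\psi$ condition together with Step 2's argument about $C'$ (the component containing $x_{out}$) having at least two boundary contacts and being stable, to conclude that the generic choice of $[\overline C] \in \mathscr{M}_{0,k+1}$ forces $v_{out}$ to map to $0 \in \Sigma(X)$ — equivalently $f(C_{v_{out}})$ meets the generic point $x \in \pi_X(V)$, which lies in $U$, so $\pmb\sigma(v_{out}) = 0$. Fifth, propagate: since $\pmb\sigma(v_{out}) = 0$ and $v_{out}$ lies in the spine of each $\tau_i$, the rigidity clause of Corollary \ref{tlemma} forces $h_t(v) = 0$, i.e. $\pmb\sigma(v) = 0$, for every spine vertex $v$ of each $\tau_i$, and these exhaust $V(S) \setminus \{v_{out}\}$; hence $\pmb\sigma(v) = 0$ for all $v \in V(S)$.

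The step I expect to be the main obstacle is the fourth: showing that the \emph{generic} $\overline\psi$ class condition actually forces $\pmb\sigma(v_{out}) = 0$ rather than merely $\dim h(\tau_{v_{out}}) = n$. The subtlety is that a priori the generic domain curve could be realized by a stable log map whose $x_{out}$-component maps into a boundary stratum $X_\rho$ with $\rho$ a cone of full dimension $n$ in $\mathscr{P}^{max}$ — this is consistent with the dimension count. To rule this out I would use the point constraint: on $\mathscr{M}(X,\beta)_\eta$ the marked point $x_{out}$ maps to a point of $U = X \setminus D$, so $\pmb\sigma(v_{out}) = 0$ is automatic there; the real content is transporting this to the generic point of $\mathfrak{M}^{ev}(\mathcal{X},\beta)_s$ via the flatness from Lemma \ref{flat} and the fact that the tropical type of the $\eta$-fiber is a face of that of the $s$-fiber, so the leg $L_{in}$-direction and the position of $v_{out}$ are constrained compatibly. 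I would also need to confirm that Step 1 (no unmarked rational tree in the boundary attached to a vertex over $0$) still applies to rule out the degenerate edge cases where an edge at $v_{out}$ is in fact a leg. The remaining steps are essentially bookkeeping with the universal family and the face inclusion $\tau_\eta \subset \tau_s$ of cones.
```
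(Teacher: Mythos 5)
Your overall architecture matches the paper's: extend over a DVR to get a special fiber over $s$, use the dimension count from Step 5 and the $\overline{\psi}$ condition to show the special fiber's tropical type $\tau'$ cuts at $v_{out}$ into broken line types, invoke Corollary \ref{tlemma} for the rigidity/transversality, and then propagate from $\pmb\sigma(v_{out})=0$ on the generic fiber. The ingredients you assemble are the right ones, and the propagation logic in your step 5 is exactly how the paper closes the argument.

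However, you misidentify where the work is. Your step 4 worry --- that the generic domain curve's $x_{out}$-component might map into a full-dimensional boundary stratum, and that you need to ``transport'' $\pmb\sigma(v_{out})=0$ to the generic point of $\mathfrak{M}^{ev}(\mathcal{X},\beta)_s$ --- has things backwards and creates a nonexistent obstacle. Over $\eta$, the constraint $f(x_{out}) = x$ with $x$ the image of $\eta \to V \subset U$ makes $\pmb\sigma(v_{out})=0$ immediate; nothing needs to be ruled out. And over $s$, the vertex $v_{out}$ emphatically does \emph{not} map to $0$: it maps to $\sigma_s$, the maximal cone associated to the zero stratum $s$, because the degenerate point constraint forces $f(x_{out}) = s$ there. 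The paper never transports $\pmb\sigma(v_{out})=0$ to the special fiber and it would be false to try. What the special fiber buys you is purely the combinatorial skeleton: over $\tau'$, the $\overline{\psi}$ condition plus Step 5's identification of the ambient component force $v_{out}$ to be exactly $(k+1)$-valent, which is what lets you cut into (expansions of) broken line types. The actual substance is (i) proving this valence/decomposition statement on $\tau'$, and (ii) the transversality argument $\textbf{u}(e_i)\notin h(\tau_{v_i})^{gp}$ from Corollary \ref{tlemma}, which makes the implication ``$h_t(v_{out})=0 \Rightarrow h_t(v_i)=0$ for all spine vertices'' go through. Only then does one observe that the face $\rho\subset\tau'$ parameterizing the generic fiber satisfies $h_t(v_{out})=0$ for $t\in\rho$, which triggers the rigidity. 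Finally, your invocation of Step 1 for the degenerate leg case is unnecessary: the paper handles an edge at $v_{out}$ that is actually a leg by declaring the corresponding cut piece a trivial broken line type, no boundary positivity argument required.
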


\begin{proof} 

Since $\mathscr{M}(X,\beta)_\eta \rightarrow \eta$ is finite type, any point $\eta' \rightarrow \mathscr{M}(X,\beta)_\eta$ specializes to a point finite \'etale over $\eta$. Thus, consider a point $\eta' \rightarrow \mathscr{M}(X,\beta)_{\eta}$, as considered in the statement of the lemma, and without loss of generality, we may assume $\eta'$ is a finite \'etale over $\eta$, which extends to a morphism $T' \rightarrow T$ associated with a finite extension of discrete valuation rings. By properness of $\mathscr{M}(X,\beta)_T \rightarrow T$, after possibly replacing $\eta'$ with the spectrum of a finite field extension, and $T'$ with the spectrum of a DVR in the field extension dominating it, there exists a curve $(\mathfrak{f},\mathfrak{C} \rightarrow T')$ whose generic fiber is $C$, which also satisfies the $\overline{\psi}$ class condition. Let $\tau'$ be the tropicalization of the basic log structure on $T'$. Note that $\tau'$ is the cone parameterizing the universal tropical family with tropical type that of the special fiber. Since the tropical type of the special fiber is marked by the tropical type of the generic fiber, there exists a face $\rho \subset \tau'$ which parameterizes the universal family of tropical curves with the tropical type of the generic fiber. We must show that all curves appearing in the family over $\rho$ satisfy the condition that all vertices lying in the convex hull of all legs map to the zero cone in $\Sigma(X)$.

 To do this, we show that for $t \in int(\tau')$, the convex hull of the legs of the tropical curve $\Gamma_t$ is a union of $k$ broken lines meeting at a point in the interior of the cone $\sigma_s$. To see this, note first that since $\pmb u (L_{out}) = 0$, we must have $\pmb\sigma(v_{out}) = \sigma_s$ for $v_{out} \in V(G_{\tau'})$ the vertex associated to the component containing $x_{out}$, so this component must necessarily be contracted to $s$ by the map. For the curve to satisfy the $\overline{\psi}$ class constraint, this component must have at least $k+1$ special points, including $x_{out}$. Thus, we must have $v_{out}$ is at least $k+1$ valent, with all connected components of $G_{\tau'} \setminus \{v_{out}\}$ containing a leg. In particular, the image of $(\mathfrak{C}_s,\mathfrak{f}_s)$ in $\mathfrak{M}^{ev}(X,\beta)_s$ must lie in a component with generic point having tropical type $\tau$ described at the end of step $4$, i.e. $\tau'$ is marked by $\tau$. Hence, $v_{out}$ is contained in $k+1$ edges which are contained in the convex hull of distinct legs $l$ and $v_{out}$. If $v_{out}$ was contained in an additional edge $e \in E(G_{\tau'})$ not given above, then after cutting the graph at $e$ and considering the resulting connected component not containing $v_{out}$, we would have a graph with $1$ leg. Since $\tau'$ is marked by $\tau$, all vertices in this graph would be marked by the class of a contracted curve, which would violate stability. Thus, $v_{out}$ is exactly $k+1$ valent, and $\tau'$ is a gluing of $k+1$ tropical types marked by broken line types, one of which is trivial associated with a leg of contact order $0$, at a vertex $v_{out}$. These types are produced by cutting the $k$ edges containing $v_{out}$. After enumerating the edges containing $v_{out}$, $e_1,\ldots,e_k$, we label the resulting tropical types marked by broken types by $\gamma_i$.

 Denote by $v_i \not= v_{out}$ the remaining vertex contained in $e_i$. Letting $cut_i: \tau' \rightarrow \gamma_i$ be the morphism of cones given by cutting at $e_i$ and projecting onto the component $\gamma_i$, note that $h_t(v) = h_{cut_i(t)}(v)$ if $v$ is contained in the connected component of $\Gamma_t\setminus\{v_{out}\}$ containing $v_i$. Since $\gamma_i$ is an expansion of a broken line type, by Corollary \ref{tlemma1}, $\textbf{u}(e_i) \notin h_{\gamma_i}(\gamma_{i,v_i})^{gp} = h_{\tau}(\tau_{v_i})^{gp}$. Now suppose $h_t(v_{out}) = 0$. Since $dim \text{ }h(\gamma_{i,v}) = n-1$, and $\textbf{u}(e_i) \notin h(\tau_{v_i})^{gp}$, we must have $h_t(v_i) = h_{cut(t)}(v_i) = 0$. Thus, by Corollary \ref{tlemma1}, all vertices in the convex hull of the legs of $\Gamma_{cut(t)}$ also map to $0$. Thus, for $t \in \tau$, if $h_t(v_{out}) = 0$, then for $v \in V(G_{\gamma_i})$, we have $h_t(v) = h_{cut_i(t)}(v) = 0$. Since $h_t(v_{out}) = 0$ for all $t \in \rho \subset \tau'$, the spine of the tropicalization of the generic fibre must have all vertices in the convex hull of legs mapping to the zero cone. 

\end{proof}

With the above lemma in mind, consider a curve $(C/\eta',f)$ of tropical type $\tau'$, associated with an $\eta'$ valued point contained in the support of a generic representative of the $\overline{\psi}$ class condition on $\mathscr{M}(X,\beta)_\eta$. If there exists $v \in V(G_{\tau'})$ such that $\pmb{\sigma}(v) \not= 0$, then $v$ cannot be in the spine of $G_{\tau'}$. But now consider the subtree of $G_{\tau'}$ given by the connected component containing $v$ resulting from cutting along the edge connecting $v_{out}$ and $v$. The existence of this tree contradicts the result from step $1$. Hence, we conclude that no component of a curve contributing to the log Gromov-Witten invariant of interest maps into $D$, that is, $(C/\eta',f)$ is induced by a morphism $\eta' \rightarrow \mathscr{M}^{tr}(U,\beta) \subset \mathscr{M}(X,\beta)$. Since the point constraint $\eta \rightarrow U$ factors through $V \subset U$, with $V$ as constructed in \cite{archmirror} Proposition $3.12$, all such curves are determined by morphisms $\eta \rightarrow \mathscr{M}^{sm}(U,\beta)$. Moreover, they all lie in the fiber over $(\eta,\overline{C}) \in U \times \mathscr{M}_{0,k+1}$, for generic $\overline{C}$. As derived in the discussion proceeding this proof, the deformations of such curves $\mathscr{M}^{sm}(U,\beta)$ are unobstructed. Hence, the log Gromov Witten invariant $N_{p_1,\ldots,p_k,0}^{\textbf{A}}$ is simply the degree of $(\mathscr{M}^{sm}(U,\beta)\times_U \eta) \times_{\mathscr{M}_{0,k+1}} [\overline{C}]$, which is simply the degree of the morphism $\mathscr{M}^{sm}(U,\beta) \times_{U\times \mathscr{M}_{0,k+1}} (\eta,[\overline{C}]) \rightarrow \eta$. Since $\Phi_{x_{out}}: \mathscr{M}^{sm}(U,\beta) \rightarrow U \times \mathscr{M}_{0,k+1}$ is finite \'etale by Proposition $3.13$ of \cite{archmirror}, and degrees of finite \'etale morphisms of varieties are invariant under base change, this is $\eta(p_1,\ldots,p_k,\textbf{A})$, as desired.

\end{proof}

\begin{example}
We continue with the running Example \ref{runex}. By \cite{int_mirror} Example $1.21$, the mirror algebra $R_{(X,D)}$ is isomorphic to 
\[R_{(X,D)} = \kk[NE(X)][\vartheta_{v_1},\vartheta_{v_2},\vartheta_{v_{1,1} + v_{2,1}},\vartheta_{v_{1,2}+v_{2,2}}]/(\vartheta_{v_1}\vartheta_{v_2} - \vartheta_{v_{1,1} + v_{2,1}} - \vartheta_{v_{1,2}+v_{2,2}}, \vartheta_{v_{1,2}+v_{2,2}}\vartheta_{v_{1,1} + v_{2,1}} - t\vartheta_{v_1}).\]

Consider the enumerative question of the number of lines in $\mathbb{P}^2$ going through a general point and intersecting $D_1$ and $D_2$ in one and two points respectively such that the cross ratio of the point mapping to the general point and the point of intersections with $D_1$ and $D_2$ satisfy a generic fixed cross-ratio. By the above theorem, together with Theorem $9.3$ of \cite{int_mirror}, this can be computed by computing the $\vartheta_0$ term of $\vartheta_{v_1}\vartheta_{v_2}^2 \in R_{(X,D)}$. To do this, we note the relation $\vartheta_{v_2}\vartheta_{v_{1,i}+v_{2,i}} = t^{[l]} + \vartheta_{v_{1,i} + 2v_{2,i}}$, which can be derived in a manner similar to Example $1.21$ of \cite{int_mirror}. Using this, observe:
\[tr(\vartheta_{v_2}^2\vartheta_{v_1}) = tr(\vartheta_{v_2}(\vartheta_{v_{1,1} + v_{2,1}} + \vartheta_{v_{1,2}+v_{2,2}})) = tr(\vartheta_{v_2}\vartheta_{v_{1,1} + v_{2,1}}) + tr(\vartheta_{v_2} \vartheta_{v_{1,2}+v_{2,2}}) = 2t^{[l]}.\]
Thus, there are two such lines.
\end{example}

We note that every affine variety $U$ admits an snc compactification by a globally generated divisor. Indeed, we may embed $U$ as a closed subset inside $\mathbb{A}^m$ for some $m$, and take the closure in $\mathbb{P}^m$, which we call $X'$. The compliment of $U$ is an ample divisor on $X'$, whose associated line bundle is given by the pullback of $\mathcal{O}_{\mathbb{P}^m}(1)$. By embedded resolution of singularities, there exists a sequence of blowups  $X \rightarrow X'$ supported away from $U$ such that $X \setminus U$ is snc.  We may pullback the divisor along this map, and produce a divisor whose associated line bundle is globally generated, and whose support is exactly $X \setminus U$.

In particular, every smooth affine log Calabi-Yau with a Zariski dense torus is compactified by a pair $(X,D)$ as above. Hence, the $2$ and $3$ input trace forms defined on the modules coincide. By the non-degeneracy of the trace form proven in \cite{archmirror} Section $18$, together with Proposition \ref{nondeg}, we conclude that the two definitions of $R_{(X,D)}$ coincide when considered as algebras over $\hat{S}_X$. Since the Keel-Yu product rule for theta functions has values in $S_X \subset \hat{S}_X$, so does the Gross-Siebert mirror, concluding the proof of Corollary \ref{mcr11}.

Corollary \ref{mcr11} implies that all structure constants used to define the non-archimedean and logarithmic mirrors coincide. After recalling how these structure constants were defined in \cite{archmirror} and \cite{int_mirror} respectively, we find that certain punctured log Gromov-Witten invariants coincide with certain counts of non-archimedean analytic disks:

\begin{corollary}
The punctured log Gromov-Witten invariants $\alpha_{p,q,\textbf{A}}^r$ for a target pair $(X,D)$ with $X\setminus D$ smooth connected affine containing a Zariski open torus, with $D$ the support of a nef divisor, are counts of non archimedean analytic disks which satisfy the toric tail condition in the sense of \cite{archmirror}.
\end{corollary}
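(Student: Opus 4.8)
The plan is to read the statement off from Corollary \ref{mcr1} after unwinding the two definitions of the structure constants in \eqref{stcont}. First I would recall that, on the logarithmic side, by \cite{int_mirror} the coefficient $\alpha_{p,q,\textbf{A}}^r$ is by construction a punctured log Gromov-Witten invariant: it is the degree of the virtual fundamental class of the moduli space of genus zero punctured log maps to $X$ of curve class $\textbf{A}$, with two input legs of contact orders $p$ and $q$, an output punctured leg of contact order $-r$ whose associated tropical leg contains $r \in B(\mathbb{Z})$, and a contact order $0$ marked point constrained to a general point $x \in U$, this moduli space having virtual dimension zero. On the non-archimedean side, by Definition $1.1$ of \cite{archmirror} together with the description of the product recalled in Section $3$, the coefficient of $\vartheta_r$ in $\vartheta_p \cdot \vartheta_q$ is the naive count of non-archimedean analytic disks in $X^{an}$ of class $\textbf{A}$ whose skeleton is a tropical disk with legs of slopes $p$, $q$ and $-r$, which pass through a chosen general point of $B \subset U^{an}$ and whose extended leg satisfies the toric tail condition.

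Second, I would invoke Corollary \ref{mcr1}: under the present hypotheses --- $U$ a smooth connected affine log Calabi-Yau containing a Zariski open torus, $D$ the support of a nef divisor, and $(X,D)$ a compactification of the kind appearing in Theorem \ref{mthm1} --- the Keel-Yu and Gross-Siebert products agree on the common free $S_X$-module $R = \bigoplus_{p \in B(\mathbb{Z})} \vartheta_p \cdot S_X$. Since the $\vartheta_p$, $p \in B(\mathbb{Z})$, form a basis of $R$ over $S_X$, equality of $\vartheta_p \cdot \vartheta_q$ in the two algebras forces equality of the individual coefficients of each $t^{\textbf{A}}\vartheta_r$; comparing the two descriptions from the previous paragraph, this says precisely that $\alpha_{p,q,\textbf{A}}^r$ equals the count of non-archimedean analytic disks satisfying the toric tail condition, which is the assertion of the corollary. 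Although Corollary \ref{mcr1} is phrased over the completion $\hat{S}_X$, the Keel-Yu product has values in $S_X \subset \hat{S}_X$, so the identity of structure constants already holds over $S_X$.

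The only genuinely substantive point --- and the main, albeit minor, obstacle --- is checking that the comparison is being made against a common indexing set and a common choice of generators. This is exactly where the Zariski dense torus hypothesis is used: by Lemma $2.2$ of \cite{archmirror} the choice of $\mathbb{T}^n \subset U$ furnishes, after a toric blowup (harmless by the birational invariance of the relevant invariants established in Step $3$ of the proof of Theorem \ref{mthm1}), an identification $B \cong M_{\mathbb{R}}$ pinning down the integral affine structure and hence the lattice $B(\mathbb{Z})$ indexing the theta functions. Both \cite{int_mirror} and \cite{archmirror} then build the mirror on this same module $R$, so the term-by-term identification of structure constants is unambiguous and the corollary follows.
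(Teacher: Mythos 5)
Your proposal is correct and matches the paper's approach: the paper treats this as an immediate consequence of Corollary \ref{mcr1}, stating only that since the two algebra structures agree, the structure constants agree, and the constants are by definition the punctured log Gromov-Witten invariants on one side and the naive non-archimedean counts (with toric tail condition) on the other. Your write-up simply fills in the definitional unwinding and the freeness-of-$R$ observation, which the paper leaves implicit.
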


Since the pullback of nef divisors are nef, the semipositivity condition is stable under further blowups, hence the collection of compactifications $(X,D)$ of $U$ for which Corollary \ref{mcr11} holds is cofinal among all compactifications of $U$ related by birational modifications with centers on $D$. 

Finally, we note the following birational invariance corollary, which follows from the birational invariance of the non-archimedean mirror given in \cite{archmirror} Proposition $17.3$:

\begin{corollary}
Let $(X,D)$ be an snc log Calabi-Yau with $X \setminus D$ a connected smooth affine log Calabi-Yau with Zariski dense torus, with $D$ the support of a nef divisor, and consider a birational morphism $(X',D') \rightarrow (X,D)$ between snc log Calabi-Yau pairs which restricts to an isomorphism $X'\setminus D' \rightarrow X \setminus D$. Then we have an isomorphism $R_{(X,D)} \cong R_{(X',D')} \otimes_{S_{X'}} S_X$, with $R_{(X,D)}$ the Gross-Siebert intrinsic mirror constructed in \cite{int_mirror}
\end{corollary}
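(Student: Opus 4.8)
The plan is to deduce this statement from two ingredients that are now in hand: the identification of the non-archimedean and logarithmic mirror constructions provided by Corollary \ref{mcr1}, and the birational invariance of the Keel-Yu mirror established in \cite{archmirror} Proposition $17.3$.

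First I would check that $(X',D')$ itself satisfies the hypotheses of Theorem \ref{mthm1}, and hence of Corollary \ref{mcr1}. This is precisely the observation recorded in the discussion preceding the present corollary: since $(X',D') \to (X,D)$ restricts to an isomorphism $X' \setminus D' \xrightarrow{\sim} X \setminus D$, the interior $X' \setminus D'$ is again a connected smooth affine log Calabi-Yau containing the same Zariski dense torus, and pulling back a nef divisor $\sum_i a_i D_i$ supported on $D$ produces a nef divisor supported on $D'$. Consequently Corollary \ref{mcr1} applies to both pairs, giving an isomorphism of $S_X$-algebras between the Gross-Siebert mirror $R_{(X,D)}$ and the Keel-Yu mirror of $(X,D)$, and likewise an isomorphism of $S_{X'}$-algebras for $(X',D')$; I will denote the two Keel-Yu mirrors by $R^{KY}_{(X,D)}$ and $R^{KY}_{(X',D')}$ for this argument.

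Next I would invoke \cite{archmirror} Proposition $17.3$, which asserts that the Keel-Yu mirror is birationally invariant: for a birational morphism of snc log Calabi-Yau pairs restricting to an isomorphism on the open part, $R^{KY}_{(X,D)} \cong R^{KY}_{(X',D')} \otimes_{S_{X'}} S_X$, where the base change is along the ring homomorphism $S_{X'} \to S_X$ induced by proper pushforward of curve classes $NE(X') \to NE(X)$. Concatenating the three isomorphisms then gives
\[ R_{(X,D)} \cong R^{KY}_{(X,D)} \cong R^{KY}_{(X',D')} \otimes_{S_{X'}} S_X \cong R_{(X',D')} \otimes_{S_{X'}} S_X, \]
which is the desired statement. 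I would also note in passing that by Corollary \ref{mcr1} the Gross-Siebert mirror $R_{(X,D)}$ is an honest family over $Spec\text{ }S_X$ (and similarly for $X'$), so the right-hand side is formed as a base change of genuine, not merely formal, algebras.

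The point I expect to require the most care is verifying that this chain of isomorphisms is compatible, that is, that the identifications of Corollary \ref{mcr1} for the two pairs intertwine correctly with the birational-invariance isomorphism of \cite{archmirror} Proposition $17.3$ and with the pushforward $S_{X'} \to S_X$. Here the relevant observation is that both mirror constructions, for either pair, are free modules on theta generators $\vartheta_p$ indexed by the integral points of the common base $B$ (identified with $M_\mathbb{R}$ via the fixed Zariski dense torus, which is shared by $X$ and $X'$), and that each of the three isomorphisms in the display is by construction the identity on these generators and linear over the appropriate coefficient ring, with curve classes tracked via pushforward; indeed the isomorphisms of Corollary \ref{mcr1} compare two algebra structures on one and the same free module in a fixed basis, and Proposition $17.3$ matches structure constants of $(X',D')$ with those of $(X,D)$ under $\pi_*$. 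Hence the composite again has this form and is the expected identification, and I would include a short remark spelling out this bookkeeping.
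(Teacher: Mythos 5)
Your proposal matches the paper's argument exactly: the text immediately preceding the corollary records that the hypotheses of Theorem \ref{mthm1} pass to $(X',D')$ by pulling back the nef divisor, and the corollary is then stated to follow from Corollary \ref{mcr1} together with the birational invariance of the Keel-Yu mirror in \cite{archmirror} Proposition $17.3$, which is precisely the chain of isomorphisms you assemble. Your added remark on compatibility of the identifications via the common theta basis is a sensible bit of bookkeeping that the paper leaves implicit.
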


The insensitivity of this algebra to different choices of compactifications is predicted under the expectation that the mirror algebra is equal to the degree $0$ symplectic cohomology of $U$, which itself is independent of compactification, see \cite{symcoh} section $4(b)$. For general log Calabi-Yau pairs $(X,D)$, assuming the birational morphism is log \'etale, this was proven in Corollary $1.6$ of \cite{pbirinv}

\section{Classical-Quantum period correspondence}

As a result of Theorem \ref{mthm11} and Theorem $1.2$ of \cite{archmirror}, given $p_1,\ldots,p_k \in B(\ZZ)$, we have a log Gromov-Witten interpretation of the coefficient of $\vartheta_0$ in $\vartheta_{p_1}\cdots \vartheta_{p_k}$ in the Gross-Siebert mirror algebra, proving Theorem \ref{mthm21} when $X\setminus D$ contains a Zariski dense torus. To remove this assumptions and replace it with Assumption $1.1$ of \cite{scatt}, we apply gluing arguments developed in \cite{toricglue} and used in \cite{scatt}.

We first formally define a tropical type which appeared in the proof of Theorem \ref{mthm11} above:

\begin{definition}[k-trace type]
A \emph{k-trace type} with inputs $p_1,\ldots p_k \in B(\mathbb{Z})$ is a type $\tau = (G,\pmb\sigma, \textbf{u})$ of tropical map to $\Sigma(X)$ such that:

\begin{enumerate}
\item G is a genus zero graph with $L(G) = \{L_1,\ldots,L_k,L_{out}\}$, with $G$ having a unique vertex $v_{out} \in V(G)$ of valence $k+1$, which we further require be adjacent to $L_{out}$, with $\pmb\sigma(L_i) ,\pmb\sigma(L_{out}) \in \mathscr{P}$ and $\textbf{u}(L_{out}) = 0$.

\item $\tau$ is realizable and balanced.

\item Let $h: \Gamma(G,l) \rightarrow \Sigma(X)$ be the corresponding universal family of tropical maps, and $\tau_{v_{out}} \in \Gamma(G,l)$ be the cone corresponding to the vertex $v_{out}$. Then $dim\text{ }\tau = dim\text{ }h(\tau_{v_{out}} )= n$.

\end{enumerate}
\end{definition}
For a $k$-trace type $\tau$, we define $k_\tau:= coker|ev_{v_{out}}(\tau^{gp}_{\NN}) \rightarrow \pmb\sigma(v_{out})_\NN^{gp}|$, analogous the corresponding definition $k_\gamma = coker|ev_{v_{out}}(\gamma^{gp}_{\NN}) \rightarrow \pmb\sigma(v_{out})_\NN^{gp}|$ for $\gamma$ a broken line type.

In proving the weak Frobenius structure theorem, we note that many arguments appearing in \cite{scatt} Section $6$ describing moduli stacks of punctured curves with marking by a product type carry over to describing moduli stacks of punctured curves with marking by a $k$-trace type. Indeed, when $k=2$, a $k$-trace type is a product type with the contact order of the output leg $0$. Mainly cosmetic changes to arguments in loc. cit. are necessary for us, but for completeness, we produce the necessary arguments in this section when slight changes are needed. 

First, let $s \in X$ be a zero stratum as in the proof of Theorem \ref{mthm11}, with $P_{\mathfrak{u}}:= \overline{\mathcal{M}}_{X,s}$, and denote by $P_{\mathfrak{u}}^{\vee} = \mathfrak{u} \in \Sigma(X)$ the maximal cone corresponding to the stratum $s$. Moreover, let $\pmb\tau$ be a decorated $k$-trace type, with vertex $v_{out} \in V(G_\tau)$ satisfying $\pmb\sigma(v_{out}) = \mathfrak{u}$, and consider the associated moduli space $\mathscr{M}(X,\pmb\tau)$. Letting $\psi_{x_{out}} \in A^1(\mathscr{M}(X,\pmb\tau))$ be the cotangent line associated with the marked point $x_{out}$, we note that we have the equality $\psi_{x_{out}} = Forget^*(\overline{\psi}_{x_{out}})$. Indeed, for $(C,f)$ a log map associated with a geometric point of $\mathscr{M}(X,\pmb\tau)$, the restriction of $f$ to the component $C'$ containing $x_{out}$ must factor through  $s \in X$. In particular, $f$ must contract $C'$, so for $C$ to be stable, $C'$ must have at least $3$ special points. Since $dim\text{ }h(\tau_{v_{out}}) = n$ by definition of a $k$-trace type, $h(\tau_{v_{out}})$ cannot be contained in a wall of the canonical scattering diagram of $(X,D)$. Thus, the connected components of $G_{\tau} \setminus \{v_{out}\}$ all contain legs not coming from edges containing $v_{out}$. Hence, all of the special points contained in $C'$ remain special after stabilizing the underlying curves, implying stabilizing the underlying curve $C$ does not contract $C'$, as required for $\psi_{x_{out}} = Forget^*(\overline{\psi}_{x_{out}})$. 

By simple modifications to the proof of Lemma $3.9$ of \cite{scatt}, the virtual dimension of $\mathscr{M}(X,\pmb\tau)$ is $k-2$. After enumerating the edges containing the unique $k+1$ valent vertex $v_{out}$ by $e_0 = L_{out},e_1,\ldots,e_k$, by cutting the edges $e_1,\ldots,e_k$, we produce a morphism $\delta: \mathscr{M}(X,\pmb\tau) \rightarrow \prod\mathscr{M}(X,\pmb\tau_i)$, where for $i>0$, $\pmb\tau_i$ is the tropical type produced by cutting along edges $e_i$ and taking the connected component which do not contain the vertex $v_{out}$, and $\pmb\tau_0$ is the resulting tropical type with a unique vertex $v_{out}$ with $k+1$ legs. It quickly follows from the definition of a $k$-trace type that $\pmb\tau_i$ is a decorated broken line type for $i>0$. The virtual dimensions of the moduli spaces $\mathscr{M}(X,\pmb\tau_i)$ are all $0$ for $i > 0$, and $k-2$ for $i = 0$. By \cite{punc} Theorem $C$, the cutting morphism $\delta$ is representable and finite. The following proposition gives a description of the class $\delta_*([\mathscr{M}(X,\tau)]^{vir})$. This closely follows \cite{scatt} Lemma $6.6$:

\begin{proposition}\label{glue}
$\delta_*([\mathscr{M}(X,\pmb\tau)]^{vir}) = \frac{\prod k_{\tau_i}}{k_\tau}\prod_i [\mathscr{M}(X,\pmb \tau_i)]^{vir} \times [\mathscr{M}(X,\pmb \tau_0)]^{vir}$

\end{proposition}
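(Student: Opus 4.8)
The plan is to follow the strategy of Lemma 6.6 of \cite{scatt}, adapting the gluing/splitting formalism for punctured log maps to the case of a $k$-trace type. The cutting morphism $\delta: \mathscr{M}(X,\pmb\tau) \to \prod_i \mathscr{M}(X,\pmb\tau_i)$ is representable and finite by \cite{punc} Theorem C, so $\delta_*([\mathscr{M}(X,\pmb\tau)]^{vir})$ is a well-defined cycle class on the product, and it suffices to identify it. First I would set up the analogous statement at the level of the Artin fan moduli stacks $\mathfrak{M}^{ev}(\mathcal{X},\pmb\tau)$ and $\prod_i \mathfrak{M}(\mathcal{X},\pmb\tau_i)$ (with the $\mathrm{ev}$ decoration only on the $i=0$ factor, since $L_{out}$ sits on $v_{out}$). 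The key point is that the obstruction theory for $\mathscr{M}(X,\pmb\tau) \to \mathfrak{M}^{ev}(\mathcal{X},\pmb\tau)$ is $R\pi_*(f^*T_X^{log}(-x_{out}))$, and under the cutting/gluing operation $f^*T_X^{log}$ on the glued curve is the fiber product over the evaluation maps at the nodes $q_{e_1},\dots,q_{e_k}$ of the restrictions to the pieces; the normalization sequence for the curve then gives a compatibility of obstruction theories between $\mathscr{M}(X,\pmb\tau)$ and the fiber product $\prod_i \mathscr{M}(X,\pmb\tau_i) \times_{(\prod_i X_{e_i})} (\text{diagonal-type locus})$. Since each edge $e_i$ ($i>0$) has $\pmb\sigma(e_i) = \mathfrak{u} \in \mathscr{P}^{max}$ a maximal cone, the relevant stratum is a zero-stratum $s \in X$ and the gluing is along $s$, so the evaluation maps are just $\mathscr{M}(X,\pmb\tau_i) \to s$ and the fiber product is literally a product once we fix the image point.

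The second ingredient is the combinatorial multiplicity. On tropical moduli the cutting map $\tau \to \prod_i \tau_i$ (together with $\tau_0$) is an inclusion of a sublattice of finite index into $\bigoplus_i \tau_i^{gp}$, and one must compare $|\mathrm{coker}|$ of the maps defining $k_\tau$ and $k_{\tau_i}$. Concretely, $\tau$ fibers over $\mathfrak{u} = \pmb\sigma(v_{out})$ via $ev_{v_{out}}$ with $k_\tau = |\mathrm{coker}(ev_{v_{out}}: \tau^{gp}_\mathbb{Z} \to \Lambda_{\pmb\sigma(v_{out})})|$, and each $\tau_i$ ($i>0$, being a broken line type) has $k_{\tau_i} = |\mathrm{coker}(\Lambda_{\tau_{i,out}} \to \Lambda_{\pmb\sigma(L_{i,out})})|$ as defined just after Corollary \ref{tlemma}. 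The factor $\frac{\prod k_{\tau_i}}{k_\tau}$ arises exactly as in \cite{scatt} Lemma 6.6: it is the index of the image of $\tau$ inside the fiber product of the $\tau_i$ over $\mathfrak{u}$, accounting for the difference between the fs (saturated) fiber product that $\mathscr{M}(X,\pmb\tau)$ maps to and the naive product of the cut moduli spaces, via \cite{punc} Theorem C's local description of the cutting morphism. I would compute this by choosing bases adapted to the gluing at the nodes $q_{e_i}$ and tracking the ghost-sheaf maps; this is the place where the $k$-trace-type hypothesis $\dim\tau = \dim h(\tau_{v_{out}}) = n$ is used, to guarantee all the relevant cokernels are finite.

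The main obstacle I expect is making precise the comparison of obstruction theories (and hence virtual classes) under the fine-and-saturated fiber product, i.e.\ checking that $[\mathscr{M}(X,\pmb\tau)]^{vir}$ pushes forward to the product of the $[\mathscr{M}(X,\pmb\tau_i)]^{vir}$ rather than to some Gysin-pulled-back refinement. In the setting of \cite{scatt} this is handled by the splitting axiom for punctured log Gromov--Witten theory (\cite{punc}, together with the gluing formalism of \cite{toricglue}); since gluing is along zero-dimensional strata $s \in X$, the evaluation stacks are just copies of $s$ with appropriate log structure, the diagonals are trivial, and the "product" on the right is honest. So the argument is: (1) reduce via the Artin fan to a statement about $\mathfrak{M}$-stacks where the perfect obstruction theories are explicit; (2) invoke the splitting/gluing result of \cite{punc}/\cite{toricglue} as in \cite{scatt} Lemma 6.6 to get $\delta_*([\mathscr{M}(X,\pmb\tau)]^{vir}) = m \cdot \prod_{i>0}[\mathscr{M}(X,\pmb\tau_i)]^{vir} \times [\mathscr{M}(X,\pmb\tau_0)]^{vir}$ for some rational multiplicity $m$; (3) compute $m = \frac{\prod_{i} k_{\tau_i}}{k_\tau}$ by the lattice-index computation above. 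Steps (1) and (2) are, as the paper says, "mainly cosmetic changes" to \cite{scatt} Section 6; the only genuine content beyond bookkeeping is confirming that the valence-$(k+1)$ vertex and the $k-2$-dimensional $\mathscr{M}(X,\pmb\tau_0)$ factor behave exactly as the valence-$3$ case does, which follows since $\pmb\tau_0$ has a single vertex and contracted image, so its contribution is formally identical to the product-type case with an extra trivial output leg.
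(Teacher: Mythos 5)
Your approach is essentially the paper's: both cut at the $k$ edges adjacent to $v_{out}$, invoke the gluing theorem of \cite{toricglue} (using that the gluing strata are zero-dimensional and hence toric by \cite{scatt} Proposition $1.3$), and identify the multiplicity as a lattice index comparing $k_\tau$ with $\prod_i k_{\tau_i}$. The paper fills in a few things you leave implicit---it checks the hypotheses of \cite{toricglue} (surjectivity of the tropical gluing map $\prod_i\tilde{\tau}_i\times\tilde{\tau}_0\to\Lambda_{\sigma,\mathbb{R}}^k$ so that $\nu=0$ is general, the expected-dimension identity, and that $\Delta(0)=\{\tau\}$), computes $\prod_i k_{\tau_i}=k_\tau\,|coker(\epsilon_\tau)|$ via an explicit $3\times3$ diagram with exact rows and columns, and treats the degenerate case of trivial broken line types separately at the end---but these are detail-level elaborations of the same route rather than a different argument.
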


\begin{proof}
We first assume no type $\tau_i$ are trivial broken lines. Since $\tau_0$ must have a leg of contact order $0$ whose corresponding marked point maps to a zero stratum, all curves in $\mathscr{M}(X,\tau_0)$ must contract to the zero stratum. In particular, the decoration of $\tau_0$ must be trivial. Since we are working with genus $0$ stable curves, there are no complications due to the question of logarithmic enhancement, and the underlying stack of $\mathscr{M}(X,\tau_0)$ is $\overline{\mathscr{M}}_{0,k+1}$. We now apply the gluing formula of \cite{toricglue} to compute the push forward, by splitting at the edges of $\tau$ containing $v_{out}$. This gives the tropical types $\tau_0,\tau_1,\ldots,\tau_k$ described above. Note that by \cite{scatt} Proposition $1.3$ the gluing strata are isomorphic to strata in appropriate toric varieties. Thus, the main result of \cite{toricglue} applies. Using the notation of loc. cit. it is straightforward to check that the gluing map $\prod_{i} \tilde{\tau}_{i} \times \tilde{\tau}_0 \rightarrow \prod_{i} \pmb\sigma(e_i)^{gp}$ is surjective. Thus, the trivial displacement vector $\nu = 0$ is general for the type $\tau$. Moreover, the required dimension formula holds:

\[\text{dim }\tilde{\tau}_0 + \sum_i \text{dim }\tilde{\tau_i} = (n+k) + k((n-1)+1) = \text{dim }\tilde{\tau} + \sum_{e} \text{rk } \pmb\sigma(e_i)\]
Thus, the set of transverse types $\Delta(0)$ consists only of $\tau$. Applying the main result of \cite{toricglue} gives the result up to a splitting multiplicity calculation.

To compute the splitting multiplicity, after recalling the morphism of abelian groups $\epsilon_\tau: \tilde{\tau}_{0,\ZZ}^{gp} \times \prod_{i=1}^k \tilde{\tau}_{i,\ZZ}^{gp} \rightarrow (\sigma^k_\NN)^{gp}$ whose kernel is $\tilde{\tau}_{\NN}^{gp}$, consider the following diagram with exact rows and columns:

\[\begin{tikzcd}
& 0 \arrow{d} & 0 \arrow{d} & 0 \arrow{d} \\
0 \arrow{r} & \tilde{\tau}_{\mathbb{N}}^{gp} \arrow{r} \arrow{d} & \tilde{\tau}_{0,\mathbb{Z}}^{gp} \times \prod_i  \tilde{\tau}_{i,\ZZ}^{gp} \arrow{r}\arrow{d} & im(\epsilon_{\tau}) \arrow{r} \arrow{d} & 0\\
0 \arrow{r} & \sigma^{gp}_\NN \times \mathbb{Z}^k \arrow{r}\arrow{d} & (\sigma^{gp}_{\NN} \times \ZZ^k) \times (\sigma^{gp}_\NN)^k \arrow{r} \arrow{d} & (\sigma^{gp}_\NN)^k \arrow{r} \arrow{d} & 0\\
0 \arrow{r} & \ZZ/k_\tau\ZZ \arrow{r} \arrow{d} & \prod_i \ZZ/k_{\tau_i}\ZZ \arrow{r}\arrow{d} & coker (\epsilon_{\tau}) \arrow{r} \arrow{d} & 0 \\
& 0 & 0 & 0
\end{tikzcd}\]

The left column is the exact sequence defining $k_\tau$ , the middle column is the product of the exact sequences defining $k_{\tau_i}$, and the isomorphism $\tilde{\tau}_0^{gp} \rightarrow \sigma_{\NN}^{gp} \times \ZZ^k$. The exactness of the third row shows
\[\prod_i k_{\tau_i} = k_\tau |coker(\epsilon _{\tau})|\]
By the main theorem of \cite{toricglue}, the multiplicity we wish to compute is $|coker(\epsilon_{\tau})|$. Hence, we produce the desired formula in the case that all types $\tau_i$ are non trivial broken line types. 

If one of the tropical types $\tau_i$ are trivial broken line types, by cutting only along edges containing $v_{out}$, the same analysis as above but forgetting the trivial broken line types shows the formula holds in general.

\end{proof}

Using this decomposition of $\delta_*([\mathscr{M}(X,\pmb\tau)]^{vir})$, after recalling $N_{\pmb\tau_i} := \frac{1}{|Aut(\tau_i)|}deg[\mathscr{M}(X,\pmb\tau_i)]^{vir}$ from \cite{scatt} Section $3.3$, we observe the following corollary:

\begin{corollary}\label{pcalc}
$\frac{k_{\tau}}{|Aut(\tau)|}\int_{[\mathscr{M}(X,\tau)]^{vir}} \psi_{x_{out}}^{k-2} = \prod_{i\ge 1} k_{\tau_i}N_{\pmb\tau_i}$
\end{corollary}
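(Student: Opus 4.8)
The plan is to deduce Corollary \ref{pcalc} directly from Proposition \ref{glue} by integrating the $\psi$ class against both sides of the pushforward formula, and then interpreting each factor on the right-hand side as a multiple of the invariants $N_{\pmb\tau_i}$. First I would recall that, as established just before the statement of Proposition \ref{glue}, we have $\psi_{x_{out}} = Forget^*(\overline\psi_{x_{out}})$ on $\mathscr{M}(X,\pmb\tau)$, and that the forgetful map there factors through the cutting morphism $\delta: \mathscr{M}(X,\pmb\tau) \to \prod_i \mathscr{M}(X,\pmb\tau_i)$ followed by the projection to $\mathscr{M}(X,\pmb\tau_0)$, whose underlying stack is $\overline{\mathscr{M}}_{0,k+1}$ (since the $\tau_0$ component is contracted to the zero stratum $s$). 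Thus $\psi_{x_{out}} = \delta^*(\mathrm{pr}_0^* \psi_{x_{out}}^{\mathscr{M}(X,\pmb\tau_0)})$, so by the projection formula
\[
\int_{[\mathscr{M}(X,\pmb\tau)]^{vir}} \psi_{x_{out}}^{k-2} = \int_{\delta_*([\mathscr{M}(X,\pmb\tau)]^{vir})} \mathrm{pr}_0^*\big(\psi_{x_{out}}^{k-2}\big).
\]

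Next I would substitute the formula of Proposition \ref{glue}, namely $\delta_*([\mathscr{M}(X,\pmb\tau)]^{vir}) = \frac{\prod_i k_{\tau_i}}{k_\tau}\, \big(\prod_{i\ge 1}[\mathscr{M}(X,\pmb\tau_i)]^{vir}\big) \times [\mathscr{M}(X,\pmb\tau_0)]^{vir}$, into the right-hand integral. Since $\mathrm{pr}_0^*\psi_{x_{out}}^{k-2}$ only involves the $\tau_0$ factor, the integral factors as a product: the zero-dimensional factors $[\mathscr{M}(X,\pmb\tau_i)]^{vir}$ for $i\ge 1$ contribute their degrees, and the $\tau_0$ factor contributes $\int_{[\mathscr{M}(X,\pmb\tau_0)]^{vir}} \psi_{x_{out}}^{k-2} = \int_{\overline{\mathscr{M}}_{0,k+1}} \overline\psi_{x_{out}}^{k-2} = 1$ (using that the virtual class of $\mathscr{M}(X,\pmb\tau_0)$ is the fundamental class of $\overline{\mathscr{M}}_{0,k+1}$, as the whole curve is contracted, and the standard top-$\psi$ integral on $\overline{\mathscr{M}}_{0,k+1}$). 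This yields
\[
\int_{[\mathscr{M}(X,\pmb\tau)]^{vir}} \psi_{x_{out}}^{k-2} = \frac{\prod_{i\ge 1} k_{\tau_i}}{k_\tau} \prod_{i\ge 1} \deg[\mathscr{M}(X,\pmb\tau_i)]^{vir}.
\]

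Finally I would convert degrees to the invariants $N_{\pmb\tau_i}$ via the definition $N_{\pmb\tau_i} = \frac{1}{|Aut(\tau_i)|}\deg[\mathscr{M}(X,\pmb\tau_i)]^{vir}$ recalled from \cite{scatt} Section $3.3$, and account for automorphisms: since cutting all edges at $v_{out}$ rigidifies the gluing, $|Aut(\tau)| = \prod_{i\ge 1}|Aut(\tau_i)|$ (the $\tau_0$ piece with its $k+1$ distinct legs has no automorphisms), so $\prod_{i\ge 1}\deg[\mathscr{M}(X,\pmb\tau_i)]^{vir} = |Aut(\tau)|\prod_{i\ge 1} N_{\pmb\tau_i}$. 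Multiplying through by $k_\tau/|Aut(\tau)|$ gives exactly $\frac{k_\tau}{|Aut(\tau)|}\int_{[\mathscr{M}(X,\tau)]^{vir}}\psi_{x_{out}}^{k-2} = \prod_{i\ge 1} k_{\tau_i} N_{\pmb\tau_i}$, as claimed. The main obstacle I anticipate is the bookkeeping around automorphism factors and making sure the normalizations of virtual classes under the product decomposition in Proposition \ref{glue} are compatible with the $N_{\pmb\tau_i}$ normalization in \cite{scatt}; in particular one must check carefully that the trivial broken line types (if any $\tau_i$ is trivial) are handled consistently, but since Proposition \ref{glue} already incorporates that case, this should reduce to matching conventions rather than new geometry.
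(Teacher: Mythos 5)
Your proposal is correct and follows essentially the same route as the paper: apply the pushforward formula of Proposition \ref{glue}, use the factorization $Forget = \pi_{\tau_0}\circ\delta$ and the identification $\mathscr{M}(X,\pmb\tau_0) \cong \overline{\mathscr{M}}_{0,k+1}$ (with unobstructed, hence honest, virtual class) to peel off the $\psi$ integral as $\int_{\overline{\mathscr{M}}_{0,k+1}}\overline\psi_{x_{out}}^{k-2}=1$, and then convert degrees to $N_{\pmb\tau_i}$ via the automorphism identity $Aut(\tau) = \prod_i Aut(\tau_i)$. The only cosmetic difference is that the paper records the automorphism factorization as a product over $i\ge 0$ (noting $Aut(\tau_0)$ is trivial via the unique-leg-per-subgraph observation), while you state it directly over $i\ge 1$; these agree.
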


\begin{proof}
We first observe $Forget: \mathscr{M}(X,\pmb\tau) \rightarrow \overline{\mathscr{M}}_{0,k+1}$ factors through $\pi_{\tau_0}\delta: \mathscr{M}(X,\pmb\tau) \rightarrow \mathscr{M}(X,\pmb\tau_0)$. Indeed, the morphism $Forget$ is induced by the stabilization of the universal family over $\mathscr{M}(X,\pmb\tau)$, which over a geometric point of $\mathscr{M}(X,\pmb\tau)$ contracts all components marked by a vertex not equal to $v_{out}$. Furthermore, any map associated with a geometric point of $ \mathscr{M}(X,\pmb\tau_0)$ is contracted to the zero stratum associated with $\pmb\sigma(v_{out})$, so the underlying family of log curve must be stable. Since the genus of the domain stable curve is $0$, standard arguments show that $C$ admits a log enhancement to give a basic map $C \rightarrow (Spec\text{ }\kk,\sigma(v_{out})^{\vee}_{\NN})$, hence the underlying stack of $\mathscr{M}(X,\pmb\tau_0)$ is isomorphic to $\overline{\mathscr{M}}_{0,k+1}$, with isomorphism given by the forgetful morphism. Additionally, the fiber of the obstruction sheaf over a curve $[(C,f)] \in \mathscr{M}(X,\pmb\tau_0)$ is $H^1(f^*T_X^{log}) = H^1(\mathcal{O}_{C})$, which is $0$ since we are working in genus $0$. Thus, $[\mathscr{M}(X,\pmb\tau_0)]^{vir} = [\mathscr{M}(X,\pmb\tau_0)]$. Furthermore, standard properties of $\psi$ classes on $\overline{\mathscr{M}}_{0,k+1}$ give $\overline{\psi}_{x_{out}}^{k-2} \cap [\overline{\mathscr{M}}_{0,k+1}] = [pt]$. We now use Proposition \ref{glue}, together with the fact that the cohomology class $\psi_{x_{out}}$ on $\mathscr{M}(X,\pmb\tau)$ is the pull back of the cohomology class $\overline{\psi}_{x_{out}}$ along the forgetful map, to calculate:

\[ \int_{[\mathscr{M}(X,\pmb\tau)]^{vir}} \psi_{x_{out}}^{k-2} = \frac{\prod_i k_{\tau_i}}{k_\tau}\int_{\prod_i[\mathscr{M}(X,\pmb\tau_i)]^{vir} \times [\mathscr{M}(X,\pmb\tau_0)]} \pi_{\tau_0}^* \psi_{x_{out}}^{k-2} =  \frac{\prod_i k_{\tau_i}deg[\mathscr{M}(X,\pmb\tau_i)]^{vir}}{k_{\tau}} .\]
The formula in the statement of the corollary follows from the definition of $N_{\pmb\tau_i}$ and observing $Aut(\tau) = \prod_{i\ge 0} Aut(\tau_i)$, since automorphisms in $Aut(\tau)$ must induce an automorphism of $G_\tau$ which fixes the legs, and each subgraph $G_{\tau_i} \subset G_{\tau}$ contains a unique leg from $G_{\tau}$. 
\end{proof}

Having expressed a certain log Gromov-Witten invariant in terms of a product of virtual counts of broken lines, we wish to relate these invariants with those considered in Theorem \ref{mthm21}. Using the interpolating moduli spaces constructed in step $4$ of the proof of Theorem \ref{mthm11}, by deformation invariance of the virtual fundamental class, we can compute the relevant degree by degenerating a generic point constraint $x \in X\setminus D$ to a zero stratum $s \in X$ of the pair $(X,D)$, and integrating the cohomology class $\psi_{x_{out}}^{k-2}$ instead against the virtual cycle $[\mathscr{M}(X,\beta)_s]^{vir}$. More precisely, we consider a point $x \rightarrow X\setminus D$ factoring through an \'etale neighborhood $O \rightarrow X$ considered in step $3$ of the proof of \ref{mthm11}, and by choosing the degeneration appropriately, we may further assume a further factorization through $C \rightarrow X$, with $C$ defined as in step $3$ of the proof of \ref{mthm11}. By two applications of deformation invariance, we have equalities of log Gromov-Witten invariants after imposing a point constraint $\eta$, for $\eta$ the generic point of $C$, and after imposing a logarithmic point constraint $s$. 

After degenerating the point constraint, note that we do not need to assume the boundary supports a nef divisor for the equality $\psi_{x_{out}} = Forget^*(\overline{\psi}_{x_{out}})$ in $A^1(\mathscr{M}(X,\beta)_s)$. The argument for this is esentially the same as the one showing the analogous result for $\mathscr{M}(X,\tau)$ for $\tau$ a $k$-trace type. The only different input needed for this argument is that for $(C,f)$ a log map associated with a geometric point of $\mathscr{M}(X,\beta)_s$, with the tropicalization of the base being the cone $\tau'$, we have $dim\text{ }h(\tau'_{v_{out}}) = n$ by the tropicalization of the point constraint. Thus, we have the following equality of log Gromov-Witten invariants:

\begin{equation}
 \int_{[\mathscr{M}(X,\beta)_x]^{vir}} \psi_{x_{out}}^{k-2} = \int_{[\mathscr{M}(X,\beta)_s]^{vir}}\psi_{x_{out}}^{k-2} = \sum_{\xi} m_\xi\int_{\epsilon^![\mathfrak{M}^{ev}_{\xi,red}]} Forget^*\overline{\psi}_{x_{out}}^{k-2}
\end{equation}

The above sum varies over generic points of $\xi \in \mathfrak{M}^{ev}(\mathcal{X},\beta)_s$, with $\mathfrak{M}^{ev}_{\xi,red}$ the reduced stack associated to a component, and $m_\xi$ is the multiplicity of the associated component. The same analysis as in \cite{scatt} Section $6$ step $3$ shows $m_\xi = m_\tau = coker|\mathbb{Z}^n \rightarrow Q_\xi^{gp}|$, for $Q_\xi$ the stalk of the ghost sheaf at the generic point $\xi$. As shown in the proof of Theorem \ref{mthm11}, $Forget^*\overline{\psi}^{k-2}$ for a generic choice of representative is only supported on curves with image in $\mathfrak{M}^{ev}(\mathcal{X},\beta)_s$ contained in components with an associated tropical type a $k$-trace type. Thus, to compute the integral of interest, it suffices to pull back only components of $\mathfrak{M}^{ev}(\mathcal{X},\beta)_s$ with associated tropical type a $k$-trace type.

Given a $k$-trace type $\tau$, we have a moduli stack $\mathfrak{M}^{ev}(\mathcal{X},\tau)_s := \mathfrak{M}^{ev}(\mathcal{X},\tau) \times_{X} s$. Moreover, we have a morphism $i: \mathfrak{M}^{ev}(\mathcal{X},\tau)_s \rightarrow \mathfrak{M}^{ev}(\mathcal{X},\beta)_s$ and diagram with cartesian squares: 

\[\begin{tikzcd}
\coprod_\tau \mathscr{M}(X,\pmb\tau) \arrow[d,"\epsilon_\tau"] &  \coprod_{\pmb\tau = (\tau,A)} \mathscr{M}(X,\pmb\tau)_s \arrow{l} \arrow{r} \arrow[d,"\epsilon_{\tau,s}"] & \mathscr{M}(X,\beta)_s \arrow[d,"\epsilon_s"] \\
\coprod_\tau \mathfrak{M}^{ev}(\mathcal{X},\tau) & \coprod_\tau \mathfrak{M}^{ev}(\mathcal{X},\tau)_s \arrow{r} \arrow{l} & \mathfrak{M}^{ev}(\mathcal{X},\beta)_s
\end{tikzcd}\]

Observe that the morphism $\mathfrak{M}^{ev}(\mathcal{X},\tau)_s \rightarrow \mathfrak{M}^{ev}(\mathcal{X},\beta)_s$ has degree $|Aut(\tau)|$, since the morphism forgets the marking by the tropical type $\tau$. After replacing $\mathfrak{M}^{ev}(\mathcal{X},\tau)_s$ with its reduction $\mathfrak{M}^{ev}(\mathcal{X},\tau)_{s,red}$, then after restricting the sum to be over components with generic points $\xi$ having tropical type a $k$-trace type, we have:

\begin{equation}\label{eq11}
\sum_\xi m_\xi [\mathfrak{M}^{ev}_{\xi,red}] = \sum_\tau \frac{m_{\tau}}{|Aut(\tau)|}i_*[\mathfrak{M}^{ev}(\mathcal{X},\tau)_{s,red}].
\end{equation}

We also observe that since $\tau$ is realizable, $\mathfrak{M}(\mathcal{X},\tau)$ is reduced by Proposition $3.28$ of \cite{punc}, and so is $\mathfrak{M}^{ev}(\mathcal{X},\tau)$. Thus, the morphism $\mathfrak{M}^{ev}(\mathcal{X},\tau)_s \rightarrow \mathfrak{M}^{ev}(\mathcal{X},\tau)$ induces a morphism $\mathfrak{M}^{ev}(\mathcal{X},\tau)_{s,red} \rightarrow \mathfrak{M}^{ev}(\mathcal{X},\tau)$. To relate the invariant of interest to an integral against $[\mathscr{M}(X,\tau)]^{vir}$, we need to calculate the degree of $\mathfrak{M}^{ev}(\mathcal{X},\tau)_{s,red} \rightarrow \mathfrak{M}^{ev}(\mathcal{X},\tau)$. This calculation is as in \cite{scatt} Section $6$ step $4$, and is given by $|coker(P_\mathfrak{u}^{gp} \rightarrow Q^{gp} \oplus \mathbb{Z}^n)_{tor}|$. Furthermore, by definition of the fine and saturated fiber product, we have $Q_\xi^{gp} =  coker(P_\mathfrak{u}^{gp} \rightarrow Q_{\tau}^{gp} \oplus \mathbb{Z}^n)/tor$. Now consider the following morphisms of exact sequences of abelian groups:

\[
\begin{tikzcd}
0\arrow{r}  & P_{\mathfrak{u}}^{gp} \arrow{r} \arrow{d} & P_{\mathfrak{u}}^{gp} \oplus \mathbb{Z}^n \arrow{r} \arrow{d} & \mathbb{Z}^n \arrow{r} \arrow{d} & 0 \\
0 \arrow{r} & Q_\tau^{gp} \oplus \mathbb{Z}^n \arrow{r} & Q_\tau^{gp} \oplus \mathbb{Z}^n \arrow{r} & 0 \arrow{r} & 0
\end{tikzcd}
\]

The homomorphism $P^{gp}_{\mathfrak{u}} \rightarrow Q_\tau^{gp}$ is injective since the tropical modulus of a broken line $h_t: \Gamma_t \rightarrow \Sigma(X)$ is determined by $h_t(v_{out})$, for $v_{out}$ the vertex adjacent to the leg contained in $\mathfrak{u} \in \Sigma(X)$. Thus, $P^{gp}_{\mathfrak{u}} \oplus \mathbb{Z}^n \rightarrow Q_\tau^{gp} \oplus \mathbb{Z}^n$ is injective, and an application of the snake lemma gives the exact sequence:

\[0 \rightarrow \mathbb{Z}^n \rightarrow coker(P_{\mathfrak{u}}^{gp} \rightarrow Q_\tau^{gp} \oplus \mathbb{Z}^n) \rightarrow coker(P_{\mathfrak{u}}^{gp} \rightarrow Q_\tau^{gp}) \rightarrow 0.\]

We also have the following morphism of short exact sequences:

\[
\begin{tikzcd}
0\arrow{r}  & \mathbb{Z}^n \arrow{r} \arrow{d} & coker(P_{\mathfrak{u}}^{gp} \rightarrow Q_\tau^{gp} \oplus \mathbb{Z}^n) \arrow{r} \arrow{d} & coker(P_{\mathfrak{u}}^{gp} \rightarrow Q_\tau^{gp}) \arrow{r} \arrow{d} & 0 \\
0 \arrow{r} & \mathbb{Z}^n \arrow{r} & Q_\xi^{gp} \arrow{r} & coker(\mathbb{Z}^n \rightarrow Q_\xi^{gp})  \arrow{r} & 0
\end{tikzcd}
\]

The middle vertical morphism is the torsion quotient, and the rightmost vertical morphism is induced by the inclusion $Q_\tau^{gp} \rightarrow Q_\tau^{gp} \oplus \mathbb{Z}^n$. Another application of the snake lemma produces an isomorphism:
\[coker(P_{\mathfrak{u}}^{gp} \rightarrow Q_\tau^{gp} \oplus \mathbb{Z}^n)_{tor} \cong ker(coker(P_{\mathfrak{u}}^{gp} \rightarrow Q_\tau^{gp}) \rightarrow coker(\mathbb{Z}^n \rightarrow Q_\xi)).\]
 Using this isomorphism, we find:

\[\begin{split}
|coker(P_\mathfrak{u}^{gp} \rightarrow Q^{gp} \oplus \mathbb{Z}^n)_{tor}||coker(\mathbb{Z}^n \rightarrow Q_\xi)| &= |coker(P_\mathfrak{u}^{gp} \rightarrow Q_\tau^{gp})|\\
&= |coker(\tau^{gp}_\ZZ \rightarrow \mathfrak{u}_\ZZ^{gp})|\\
&= k_\tau
.\end{split}\]

Recalling that $m_\tau = |coker(\mathbb{Z}^n \rightarrow Q_\xi^{gp})|$, we thus find:

\begin{equation}\label{eq12}
k_\tau [\mathfrak{M}^{ev}(\mathcal{X},\tau)] = m_\tau k_{*,red}[ \mathfrak{M}^{ev}(\mathcal{X},\tau)_{s,red}] .
\end{equation}

By using the equations in (\ref{eq11}) and (\ref{eq12}), as well as the pull-push formula of Proposition $4.1$ of \cite{vpull} for the second and third equality, we express the left hand side of claimed equality of Theorem \ref{mthm21} as: 

\[\begin{split}
\sum_{\beta = (\textbf{A},\underline{\beta})} t^\textbf{A} \int_{[\mathscr{M}(X,\beta)]_s^{vir}} \psi_{x_{out}}^{k-2} &= \sum_{\textbf{A} \in NE(X)} t^\textbf{A}  \sum_{\xi} m_\xi \int_{\epsilon^![\mathfrak{M}^{ev}_{\xi,red}]} \psi_{x_{out}}^{k-2}\\
&= \sum_{\textbf{A} \in NE(X)} t^\textbf{A} \sum_{\pmb\tau} \frac{m_\tau}{Aut(\tau)}  \int_{\epsilon_{\pmb\tau,s}^![\mathfrak{M}^{ev}(\mathcal{X},\tau)_{s,red}]} \psi_{x_{out}}^{k-2}\\
&= \sum_{\textbf{A} \in NE(X)} t^\textbf{A}  \sum_{\pmb\tau} \frac{k_\tau}{Aut(\tau)}  \int_{\epsilon_{\pmb\tau}^![\mathfrak{M}^{ev}(\mathcal{X},\tau)]} \psi_{x_{out}}^{k-2}\\
&= \sum_{\textbf{A} \in NE(X)} t^\textbf{A} \sum_{\pmb\tau}  \prod_i k_{\tau_i}N_{\pmb\tau_i}.
\end{split} \]

The sum over $\pmb\tau$ above is over decorated $k$-trace types with total curve class $\textbf{A}$, and the last equality follows by Corollary \ref{pcalc}. Note, $\langle \vartheta_{p_1} \ldots \vartheta_{p_k}\rangle$ by definition equals the coefficient in $S_X$ in front of the $\vartheta_0$ term of $\vartheta_{p_1}\cdots \vartheta_{p_k}$. Moreover, as follows from step $4$ from the proof of Theorem \ref{mthm11}, every $k$-trace type $\tau$ is uniquely determined by a choice of $k$ broken line types $\tau_i$ such that the sum of the slopes of outgoing legs $L_{i,out}$ equals $0$. Using the associativity of the product rule for theta functions, together with the expression for products of theta functions derived in \cite{scatt} Section $6$, it is now straightforward to show the desired equality of Theorem \ref{mthm21}.

Now consider the case of $(X,D)$ an snc Fano pair. In \cite{fanoperiod}, Mandel states a conjecture, which in \cite{fanoperiod} is referred to as Conjecture $1.4$, and shows that if this conjectures holds, there is an equality between certain power series encoding quantum mirror periods of $(X,D)$, and the classical periods of its Landau-Ginzburg mirror $\check{X}$. We recall the conjecture below:

\begin{conjecture}[\cite{fanoperiod} Conjecture 1.4]\label{ncount}
For $\vartheta_{p_1},\ldots,\vartheta_{p_k}$ elements of the $S_X$-module basis for $R_{(X,D)}$, $\textbf{p} = (p_1,\ldots,p_k)$, and $\tilde{X} \rightarrow X$ a log \'etale modification such that the contact orders are either $0$ or divisorial, let $N_{\textbf{A}}^{naive}(\bold{p})$ be the number of marked maps $f: (\mathbb{P}^1,x_1,\ldots,x_k,x_{out},s) \rightarrow \tilde{X}$, with $x_1,\ldots,x_k,x_{out},s$ a general fixed configuration of distinct points on $\mathbb{P}^1$, $f(\mathbb{P}^1)$ meeting $D$ with contact order $p_i$ at the marked point $x_i$, and the marked point $x_{out}$ mapping to a fixed general point in $X \setminus D$, and $f_*[\mathbb{P}^1] = \textbf{A}$. Then $N_{\textbf{A}}^{naive}(p_1,\ldots,p_k)$ is finite, as is the sum:
\begin{equation}
\langle \vartheta_{p_1},\ldots,\vartheta_{p_k}\rangle^{naive} := \sum_{\textbf{A} \in NE(\tilde{X})} t^{\pi_*(\textbf{A})} N_\textbf{A}^{naive}(\textbf{p}) \in \mathbb{Z}[NE(X)].
\end{equation}
Extend $\langle \cdot\rangle^{naive}$ to define the $S_X$ multilinear $k$-point function:
\[ \langle \cdot \rangle : R_{(X,D)}^k \rightarrow S_X\]
Then there exists a product $\star$ on $R_{(X,D)}$ making it into a commutative associative $S_X$ algebra with identity $\vartheta_0$ such that:

\[\langle \vartheta_{p_1},\ldots,\vartheta_{p_k}\rangle^{naive} = \langle \vartheta_{p_1}\star \cdots \star \vartheta_{p_k}\rangle^{naive}\]
for all $r$ tuples $\vartheta_{p_1},\ldots,\vartheta_{p_k}$, $r\ge 1$. Moreover $\langle \vartheta_p\rangle^{naive}$ is $0$ if $p \not= 0$, and $1$ otherwise, so the righthand side above is given by taking the $\vartheta_0$ coefficient of the product.
\end{conjecture}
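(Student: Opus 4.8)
The plan is to derive the conjecture from Theorems \ref{mthm1} and \ref{mthm2}, taking for the product $\star$ on $R_{(X,D)}$ the Gross--Siebert intrinsic mirror product, which by \cite{int_mirror} is a commutative associative $S_X$-algebra structure with unit $\vartheta_0$. The crux is to identify the naive count $N^{naive}_{\textbf{A}}(\textbf{p})$ with the punctured log Gromov--Witten invariant $\int_{[\mathscr{M}(X,\beta)_x]^{vir}}\psi_{x_{out}}^{k-2}$ appearing in Theorem \ref{mthm2}. First I would apply the birational invariance \cite{bir_GW}, exactly as in Step 3 of the proof of Theorem \ref{mthm1}, to pass to a log \'etale modification $\tilde X\to X$ on which all contact orders $p_i$ are $0$ or divisorial; this matches the sum over $NE(\tilde X)$ pushed forward to $NE(X)$ in the definition of $\langle\cdot\rangle^{naive}$ with the log Gromov--Witten invariants on $X$. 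Next, by Steps 2 and 4--6 of the proof of Theorem \ref{mthm1}, a generic representative of $[\mathscr{M}(\tilde X,\beta)_x]^{vir}\cap\psi_{x_{out}}^{k-2}$ is a reduced finite collection of smooth unobstructed rational curves, each counted once, and these are precisely the maps $f\colon\mathbb{P}^1\to\tilde X$ of class $\textbf{A}$ meeting $D$ with contact order $p_i$ at a fixed configuration of points $x_i$ and sending $x_{out}$ to a fixed general point of $X\setminus D$ --- that is, the maps enumerated by $N^{naive}_{\textbf{A}}(\textbf{p})$ (the extra free marked point $s$ of \cite{fanoperiod} is harmless for $k\ge 2$, where fixing $\psi_{x_{out}}^{k-2}$ already rigidifies the domain, and for $k=1$ it supplies the third point needed to replace the $\psi$ integral by a point of $\overline{\mathscr{M}}_{0,3}$). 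Each individual count is finite by \cite{archmirror} Section 3; finiteness of the sum over $\textbf{A}$ is where Fano-ness enters, since $\textbf{A}\cdot D$ is determined by $\textbf{p}$ and $X$ Fano bounds the contributing classes, so $\langle\vartheta_{p_1},\dots,\vartheta_{p_k}\rangle^{naive}\in S_X$ and, by Theorem \ref{mthm2}, equals the $\vartheta_0$-coefficient $\langle\vartheta_{p_1}\cdots\vartheta_{p_k}\rangle$ of the $\star$-product.

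With this identification the remaining assertions are formal. For $k=1$ we have $N^{naive}_{\textbf{0}}(0)=1$ (the constant map to the fixed general point) and $N^{naive}_{\textbf{A}}(q)=0$ otherwise, since a rational curve through a general point subject only to a single contact constraint moves in a positive-dimensional family; equivalently, expressing $\vartheta_q\in R_{(X,D)}$ in the basis $\{\vartheta_p\}$ gives $\vartheta_q$ itself, whose $\vartheta_0$-coefficient is $\delta_{q,0}$. Hence $\langle\vartheta_q\rangle^{naive}=\delta_{q,0}$. Now extend $\langle\cdot\rangle^{naive}$ to an $S_X$-multilinear function $R_{(X,D)}^{\,k}\to S_X$ as in the conjecture. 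Writing $\vartheta_{p_1}\star\cdots\star\vartheta_{p_r}=\sum_q c_q\vartheta_q$ with $c_q\in S_X$, associativity of $\star$ together with the identification above yields
\[
\langle\vartheta_{p_1}\star\cdots\star\vartheta_{p_r}\rangle^{naive}=\sum_q c_q\langle\vartheta_q\rangle^{naive}=c_0=\langle\vartheta_{p_1}\cdots\vartheta_{p_r}\rangle=\langle\vartheta_{p_1},\dots,\vartheta_{p_r}\rangle^{naive},
\]
which is the asserted equality, and the final clause of the conjecture --- that the right-hand side is the $\vartheta_0$-coefficient of the product --- is exactly the middle equalities above.

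The step I expect to be the main obstacle is the first: checking that $N^{naive}_{\textbf{A}}(\textbf{p})$ as defined in \cite{fanoperiod}, with its particular marked-point conventions and choice of log \'etale modification, coincides exactly with the reduced $0$-cycle extracted in the proof of Theorem \ref{mthm1} --- most importantly that no component of a generic contributing curve lies in the boundary, which is precisely where Steps 1 and 6 of that proof and the nef/Fano hypothesis are used, together with the correct treatment of the low-valence cases $k=1,2$. Once this dictionary is in place, the rest follows formally from the algebra structure on $R_{(X,D)}$ supplied by \cite{int_mirror} and \cite{archmirror}.
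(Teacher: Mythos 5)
Your overall strategy matches the paper's exactly: take $\star$ to be the Gross--Siebert intrinsic product, use Theorem \ref{mthm1} to turn the naive count into a log Gromov--Witten invariant, use Theorem \ref{mthm2} to recognize that invariant as the $\vartheta_0$-coefficient of a theta product, observe $\langle\vartheta_q\rangle^{naive}=\delta_{q,0}$, and close with formal algebra using that $\vartheta_0$ is the unit.

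The one place where you deviate, and where your argument is fragile, is the treatment of the extra marked point $s$. You aim to identify $N^{naive}_{\textbf A}(\textbf p)$ directly with the $k$-input invariant $\int_{[\mathscr{M}(X,\beta)_x]^{vir}}\psi_{x_{out}}^{k-2}$ and wave off $s$ as ``harmless for $k\ge 2$.'' But the conjecture fixes the full configuration $(x_1,\dots,x_k,x_{out},s)$, i.e.\ a point of $\mathscr{M}_{0,k+2}$, whereas $\psi_{x_{out}}^{k-2}$ only fixes a point of $\mathscr{M}_{0,k+1}$. That these two counts coincide is true but it is not automatic from ``rigidification''; it requires a forgetful-map/string-equation style argument that you do not supply. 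The paper sidesteps this entirely by keeping $s$ in the moduli problem as a contact-order-$0$ marked point: it applies Theorems \ref{mthm1} and \ref{mthm2} with the $k+1$ inputs $(0,p_1,\dots,p_k)$ (so with $\psi_{x_{out}}^{k-1}$), reads off immediately that $N^{naive}_{\textbf A}(\textbf p)=N^{\textbf A}_{0,0,p_1,\dots,p_k}$, identifies that with $\langle\vartheta_0\cdot\vartheta_{p_1}\cdots\vartheta_{p_k}\rangle$, and only then discards the extra $\vartheta_0$ using that it is the unit. This is cleaner and makes the $k=1$ case come for free, whereas your route treats $k=1$ as a special case. If you want to keep your formulation you should either prove the ``harmless'' claim, or (simpler) just absorb $s$ as a zeroth contact order input as the paper does.
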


This conjecture, with the additional hypotheses that allow for the construction of a canonical scattering diagram, can now be derived from the intrinsic mirror construction, as well as Theorems \ref{mthm11} and \ref{mthm21} above. First, by Theorem \ref{mthm11}, we know $N^{\textbf{A}}_{0,0,p_1,\ldots,p_k} = N^{naive}_{\textbf{A}}(\textbf{p})$. By the result of Theorem \ref{mthm21} we know that the log Gromov-Witten invariant  $N^{\textbf{A}}_{0,0,p_1,\ldots,p_k}$ equals the $\vartheta_0$ term of $\vartheta_0 \cdot \vartheta_{p_1}\cdots \vartheta_{p_k}$. Now note that by Theorem \ref{mthm11}, $\langle \vartheta_p \rangle^{naive} = \sum_{A \in NE(X)} N_{p,0,0} z^A$, i.e. the $\vartheta_0$ term of $\vartheta_0 \cdot \vartheta_p  = \vartheta_p$. Since $\vartheta_0$ is the identity in $R_{(X,D)}$, we have $\langle \vartheta_p \rangle^{naive}$ is equal to $1$ if $p = 0$ and $0$ otherwise. Combining these results therefore gives Conjecture \ref{ncount} for Fano pairs satisfying Assumption $1.1$ of \cite{scatt}. By Theorem $1.12$ from \cite{fanoperiod}, we derive Corollary \ref{mcr2} from the introduction.

\begin{example}
We return to our running Example \ref{runex}. Since the regularized quantum period only depends on the ambient Fano variety, and not on a choice of representative for the anti-canonical divisor, we can compute $\hat{G}_{\mathbb{P}^2}$ using the standard toric boundary. It is straightforward to calculate in this case that $\hat{G}_X = \sum_{k = 0}^\infty \frac{3k!}{(k!)^3}t^{k[l]}$, which after multiplying the coefficient in front of $t^{k[l]}$ by $\frac{1}{(-K_X \cdot k[l])!} = \frac{1}{(3k)!}$ recovers the well known quantum periods of $\mathbb{P}^2$, see \cite{Fanmir} Example $4.5$. Working with the anti-canonical representative given by the generic line and conic from before, we have $W = \vartheta_{v_1} + \vartheta_{v_2}$, and:

\[\pi_W = \sum_{k=0}^\infty tr(W^k) = \sum_{k=0}^\infty  tr(\sum_{j = 0}^k {k \choose j} \vartheta_{v_1}^j \vartheta_{v_2}^{k-j})\]
Using arguments similar to those used to derive equations for the mirror family, we have $tr(\vartheta_{v_1}^i\vartheta_{v_2}^j)$ can only be non zero when $j = 2i$. Thus, we can re-express the previous formula as:
\[\pi_W = \sum_{k=0}^\infty {3k \choose k} tr(\vartheta_{v_2}^{2k}\vartheta_{v_1}^k)\]
Since we know $\pi_W = \hat{G}_X$, we must have:
\[{3k \choose k} tr(\vartheta_{v_2}^{2k}\vartheta_{v_1}^k) =  \frac{3k!}{(k!)^3}t^{k[l]}\]
This implies that $tr(\vartheta_{v_2}^{2k}\vartheta_{v_1}^k) = {2k \choose k}t^{k[l]}$. Thus, by Theorem \ref{mthm11}, there are ${2k \choose k}$ morphisms $f: \mathbb{P}^1 \rightarrow \mathbb{P}^2$ with $f_*([\mathbb{P}^1]) = k[l]$ going through a generic point in $\mathbb{P}^2$, and intersecting the line, conic, and generic point in a fixed generic configuration of points on the domain. This agrees with computations done in \cite{scatloo}.
\end{example}

\begin{remark}
In \cite{RQC_no_log}, Tseng and You derive a similar result for their construction of an orbifold version of the mirror algebra, defined by restriction to the degree zero part of the orbifold relative quantum cohomology ring. Analogous to how the WDVV relation in orbifold GW theory is used to derive the associativity of the orbifold mirror algebra, loc. cit. uses topological recursion relations to deduce the weak Frobenius structure theorem. For both questions in the log setting, direct gluing arguments are used, as analogues of these relations in the log setting are not known. 
\end{remark}

\nocite{*}
\bibliographystyle{amsalpha}
\bibliography{Mirrorcomp}

\end{document}